%%%% Tohoku Mathematical Journal - March 15, 2013 Ver.6 - %%%%
\documentclass[leqno,12pt]{amsart} % leqnoで数式番号を左側へ配置
\setlength{\textheight}{23cm}
\setlength{\textwidth}{16cm}
\setlength{\oddsidemargin}{0cm}
\setlength{\evensidemargin}{0cm}
\setlength{\topmargin}{0cm}
\usepackage{color}
\usepackage{amssymb}
%
 % 行間をダブルスペース
%
%%%%%%%%% Theorem-like環境の定義 %%%%%%%%%%%
% [定理型環境内の文章中の書体がイタリック体]
\theoremstyle{plain} % イタリック体
\newtheorem{theorem}{\indent\sc Theorem}[section] % 見出しはスモールキャップ
\newtheorem{lemma}[theorem]{\indent\sc Lemma}

\newtheorem{proposition}[theorem]{\indent\sc Proposition}

% [定理型環境内の文章中の書体がローマン体]
\theoremstyle{definition} % ローマン体に変更

\newtheorem{remark}[theorem]{\indent\sc Remark}

%
% (注) 定理等の見出しに通し番号を付けない場合
% \newtheoremのあとに*を付け, [theorem]を削除

%
\numberwithin{equation}{section}%自分で追加しました
%%%%% 証明 %%%%%
 % 見出しはスモールキャップ
% ((証明は以下のコマンドを利用))
% \begin{proof}
% \end{proof}
% 証明終わりの記号は白抜き四角
%%%%%%%%%%%%%%%%%%%%%%%%%%%%%%%%%%%%%%%%%%%%%%%%%%%%%%%%%%%%%%

\begin{document}
\allowdisplaybreaks%自分で追加しました
\title[Global existence for a system of multiple-speed wave equations]
{Global existence for a system of multiple-speed wave equations 
violating the null condition} % 論文タイトル []内は柱用(50文字以内)

%\author[F. Author]{First Author$^*$} % 第一著者名 []内は柱用

%\author[S. Author]{Second Author} % 第二著者名 []内は柱用
\author[K. Hidano]{Kunio Hidano$^*$}
%Second author
\author[K. Yokoyama]{Kazuyoshi Yokoyama}
\author[D. Zha]{Dongbing Zha$^\dag$}
%\dedicatory{Dedicated to Professor Xxx Yyy on his sixtieth birthday}

%%%%%%%%%%%%%%%%%%% 脚注 %%%%%%%%%%%%%%%%%%%%%%%%%%%%%%
%\subjclass[2010]{ % 2010MSC
%Primary 00; Secondary 00.
%}
% \subjclass[2010]コマンドが効かない(amsart.clsのバージョンが古い)場合は
% 以下を利用すること
%\renewcommand{\thefootnote}{\fnsymbol{footnote}}
%\footnote[0]{2010\textit{ Mathematics Subject Classification}.
%Primary 00; Secondary 00.}
\renewcommand{\thefootnote}{\fnsymbol{footnote}}
\footnote[0]{2010\textit{ Mathematics Subject Classification}.
 Primary 35L52, 35L15; Secondary 35L72}

\keywords{ % keywords
Global existence, multiple-speed wave equations.
}
\thanks{ % Thanks
$^*$Partly supported by 
the Grant-in-Aid for Scientific Research (C) (No.\,18K03365), 
Japan Society for the Promotion of Science (JSPS) }
\thanks{
$^\dag$Supported by National Natural Science Foundation of 
China (No.11801068) and 
Fundamental Research Funds for 
the Central Universities (No.\,2232021G-13)
}
%%%%%%%%%%%% 著者所属 %%%%%%%%%%%%%

\address{ % 第一著者
Department of Mathematics \endgraf
Faculty of Education \endgraf
Mie University \endgraf
1577 Kurima-machiya-cho Tsu \endgraf
Mie Prefecture 514-8507 \endgraf
Japan
}
\email{hidano@edu.mie-u.ac.jp}

\address{ % 第二著者
Hokkaido University of Science \endgraf
7-Jo 15-4-1 Maeda, Teine, Sapporo \endgraf
Hokkaido 006-8585 \endgraf
Japan
}
\email{yokoyama@hus.ac.jp}

\address{ % 第三著者
Department of Mathematics and \endgraf
Institute for Nonlinear Sciences \endgraf 
Donghua University \endgraf
Shanghai 201620 \endgraf
PR China
}
\email{ZhaDongbing@163.com}

%%%%%%%%%%%%%%%%%%%%%%%%%%%%%%%%%%%%%%%%%%%%%%%%%%%%%%%

\maketitle

\begin{abstract}
We discuss the Cauchy problem 
for a system of semilinear wave equations 
in three space dimensions with 
multiple wave speeds. 
Though our system does not satisfy the standard null condition, 
we show that 
it admits a unique global solution 
for any small and smooth data. 
This generalizes a preceding result due to 
Pusateri and Shatah. 

The proof is carried out by the energy method 
involving a collection of generalized derivatives. 
The multiple wave speeds disable the use of 
the Lorentz boost operators, 
and our proof therefore relies upon the version of 
Klainerman and Sideris. 
Due to the presence of nonlinear terms 
violating the standard null condition, 
some of components of the solution 
may have a weaker decay as $t\to\infty$, 
which makes it difficult even to establish 
a mildly growing (in time) bound for the high energy estimate. 
We overcome this difficulty by 
relying upon the ghost weight energy estimate of Alinhac and 
the Keel-Smith-Sogge type $L^2$ weighted space-time estimate for 
derivatives.
\end{abstract}

%\section*{Introduction} % Introductionに見出し番号を付ける場合は*をとる
%\section{Introduction}
%Acknowledgments may be included at the end of the Introduction.
\section{Introduction}
This paper is concerned with the Cauchy problem for 
a system of semilinear wave equations 
in three space dimensions of the form
%%%%%%%%%%%%%%%%%%%%%%%%%%%%%%%%%%
\begin{equation}\label{eq1}
\begin{cases}
\displaystyle{
\partial_t^2 u_1-\Delta u_1
=
F_1(\partial u_1,\partial u_2,\partial u_3),
\,\,t>0,\,x\in{\mathbb R}^3
             },\\
%%%%%%%%%%%%%%%%%%%%%%%
\displaystyle{
\partial_t^2 u_2-\Delta u_2
=
F_2(\partial u_1,\partial u_2,\partial u_3),
\,\,t>0,\,x\in{\mathbb R}^3
              },\\
%%%%%%%%%%%%%%%%%%%%%%
\displaystyle{
\partial_t^2 u_3-c_0^2\Delta u_3
=
F_3(\partial u_1,\partial u_2,\partial u_3),
\,\,t>0,\,x\in{\mathbb R}^3
              }
\end{cases}
\end{equation}
%%%%%%%%%%%%%%%%%%%%%%%%%%%%
subject to the initial condition
%%%%%%%%%%%%%%%%%%%%%%
\begin{equation}\label{data}
(u_i(0),\partial_t u_i(0))
=
(f_i,g_i)
\in
C_0^\infty({\mathbb R}^3)\times C_0^\infty({\mathbb R}^3),\,\,
i=1,2,3,
\end{equation}
%%%%%%%%%%%%%%%%%%%%%%%%%%%%%%%%%%%%%%%
where $(u_1,u_2,u_3):(0,\infty)\times{\mathbb R}^3\to{\mathbb R}^3$, 
$\partial=(\partial_0,\partial_1,\partial_2,\partial_3)$, 
$\partial_0=\partial/\partial t$, 
$\partial_i=\partial/\partial x_i$, 
and $c_0>0$. 
Moreover, $F_1(y)$, $F_2(y)$, and $F_3(y)$ 
are polynomials in $y\in{\mathbb R}^{12}$ 
of degree $\geq 2$. That is, 
we suppose that 
the nonlinear term has the form 
\begin{equation}\label{formF}
F_i(\partial u_1,\partial u_2,\partial u_3)
=
F_i^{jk,\alpha\beta}(\partial_\alpha u_j)(\partial_\beta u_k)
+
C_i(\partial u_1,\partial u_2,\partial u_3),\,\,i=1,2,3,
\end{equation}
where $C_i(y)$ is a polynomial in $y\in{\mathbb R}^{12}$ 
of degree $\geq 3$. 
In what follows, we suppose $F_i^{jk,\alpha\beta}=0$ if $j>k$, 
without loss of generality. 
Here, and in the following discussion as well, 
we use the summation convention: 
if lowered and uppered, 
repeated Greek letters and Roman letters 
are summed for $0$ to $3$ and 
$1$ to $3$, respectively. 

Though our main interest lies in global existence 
of small, smooth solutions in the case $c_0\ne 1$, 
we first review some of the results for the case $c_0=1$. 
It follows from the fundamental result of John and Klainerman \cite{JK} 
that the equation (\ref{eq1}) admits a unique 
``almost global'' solution for small, smooth data with compact support. 
That is, the time interval on which 
the local solution exists becomes exponentially large 
as the size of initial data gets smaller and smaller. 
Almost global existence is the most that one can expect in general. 
Indeed, nonexistence of global solutions is known even for 
small data. 
See, e.g., John \cite{John1981} and Sideris \cite{Sideris1983} 
for the scalar equations $\partial_t^2 u-\Delta u=(\partial_t u)^2$ 
and $\partial_t^2 u-\Delta u=|\nabla u|^2$, 
respectively. 
On the other hand, if the null condition is satisfied, 
that is, for any given $i,j,k$ we have 
$F_i^{jk,\alpha\beta}X_\alpha X_\beta\equiv 0$ 
for all $(X_0,X_1,X_2,X_3)\in{\mathbb R}^4$ satisfying 
$X_0^2=X_1^2+X_2^2+X_3^2$, 
then it follows from the seminal result of 
Christodoulou \cite{Christodoulou} and 
Klainerman \cite{KlainermanNull86} 
(see also Alinhac \cite[p.\,94]{Al2010} 
for a new proof using $L^2$ space-time weighted estimates 
for some special derivatives) 
that the equation (\ref{eq1}) admits a unique global 
solution for small, smooth data. 
Christodoulou employed the method of conformal mapping 
and Klainerman employed the energy method involving 
the generators of the translations, 
the Lorentz transformations, and the dilations. 

Let us turn our attention to the case $c_0\ne 1$, 
which does not seem amenable to the method in \cite{Christodoulou} 
or \cite{KlainermanNull86} 
because of the presence of multiple wave speeds. 
Alternative techniques based on 
a smaller collection of generators 
have been explored by a lot of authors, 
such as Kovalyov \cite{Kov} and Yokoyama \cite{Y} 
using point-wise estimations of the fundamental solution, 
Klainerman and Sideris \cite{KS} and Sideris and Tu \cite{SiderisTu} 
without relying upon point-wise estimations of the fundamental solution, 
and Keel, Smith and Sogge \cite{KSS2002} 
using $L^2$ space-time weighted estimates for derivatives. 
Obviously, the technique in \cite{KSS2002} is applicable to 
the Cauchy problem (\ref{eq1})--(\ref{data}) with $c_0\ne 1$ 
and leads to almost global existence result. 
Moreover, if $c_0\ne 1$ and the null condition in the sense of 
\cite{Y}, \cite{SiderisTu}, and \cite{LNS} is satisfied, 
that is, we have for any $i=1,2$ and $(j,k)=(1,1)$, $(1,2)$, %$(2,1)$, 
and $(2,2)$
\begin{align}
&F_i^{jk,\alpha\beta}X_\alpha X_\beta\equiv 0,\,\,
X\in{\mathcal N}^{(1)},\label{null1}\\
%%%%%%%%%%%%%%%%%%%%%%%%%%%%%%%%%%%%%%%%%%%%%%%%%
&F_3^{33,\alpha\beta}X_\alpha X_\beta\equiv 0,\,\,
X\in{\mathcal N}^{(c_0)},\label{null2}
\end{align}
then it follows from \cite{Y}, \cite[Remark following Theorem 3.1]{SiderisTu}, and 
\cite[Theorem 1.1]{LNS} 
that the equation (\ref{eq1}) admits a unique global solution 
for small, smooth data. 
(We note that as pointed out in \cite{H2004}, 
the argument of Sideris and Tu is general enough to 
handle the nonlinear terms satisfying (\ref{null1})--(\ref{null2}), 
although they were not explicitly treated in \cite{SiderisTu}.) 
Here, and in the following as well, 
we use the notation 
\begin{equation}\label{definitionnc}
{\mathcal N}^{(c)}:=
\{
X=(X_0,X_1,X_2,X_3)\in{\mathbb R}^4\,:\,
X_0^2=c^2(X_1^2+X_2^2+X_3^2)
\},\,\,c>0.
\end{equation}
%%%%%%%%%%%%%%%%%%%%%%%%%%%%%%%%%%%%%
Recently, there have been a lot of activities 
in studying systems of wave equations with wider classes of 
quadratic nonlinear terms 
for which one still enjoys global solutions 
for any small, smooth data. 
See, e.g., \cite{LR2003}, \cite{Al2006}, \cite{Lind2008}, 
\cite{KMS}, \cite{HY2017}, and \cite{Keir} for 
systems in three space dimensions with equal propagation speeds. 
As for (\ref{eq1}) with $c_0\ne 1$, 
we easily see that 
the condition (\ref{null1})--(\ref{null2}) is sufficient but not necessary 
for global existence. Indeed, 
setting 
$F_1=(\partial_t u_2)^2$, 
$F_2=F_3=(\partial_t u_2)(\partial_t u_3)$, 
we see this term $(\partial_t u_2)^2$ violating the condition 
(\ref{null1}) but we obtain global solutions by first 
solving the system consisting of the second and the third equations in 
(\ref{eq1}) on the basis of the results in 
\cite{Y}, \cite{SiderisTu}, and \cite{LNS}
and then regarding the first equation in (\ref{eq1}) 
just as the inhomogeneous wave equation with the 
``source term'' $(\partial_t u_2)^2$. 
Interestingly, 
using the space-time resonance method, 
Pusateri and Shatah \cite{PS} have proved that 
global existence of small solutions carries over to 
3-component systems with a class of nonlinear terms, say, 
$F_1=(\partial_t u_2)^2+(\partial_t u_1)(\partial_t u_3)^2$, 
$F_2=(\partial_t u_2)(\partial_t u_3)+
\bigl((\partial_t u_2)^2-|\nabla u_2|^2\bigr)$, 
$F_3=(\partial_t u_2)(\partial_t u_3)+(\partial_t u_1)(\partial_t u_2)^2$. 
They also mention that 
$\partial u_1$ has a weaker decay as $t\to\infty$. 
Inspired by their observation, 
we like to find 3-component and 2-speed systems with 
a wider class of nonlinear terms 
for which one still obtains global solutions for small, smooth data. 
In particular, we are interested in the case where 
$u_1$, which may have a weaker decay, is involved 
in quadratic nonlinear terms. 
We suppose 
\begin{align}
&F_1^{11,\alpha\beta}X_\alpha X_\beta\equiv 0,\,\,
X\in{\mathcal N}^{(1)},\label{assumption1}\\
%%%%%%%%%%%%%%%%%%%%%%%%%%%%%%%%%%%%%%%%%
&F_2^{11,\alpha\beta}X_\alpha X_\beta
=
F_2^{12,\alpha\beta}X_\alpha X_\beta
%=F_2^{21,\alpha\beta}X_\alpha X_\beta
=
F_2^{22,\alpha\beta}X_\alpha X_\beta\equiv 0,\,\,
X\in{\mathcal N}^{(1)},\label{assumption2}\\
%%%%%%%%%%%%%%%%%%%%%%%%%%%%%%%%%%%%%%%%
&
F_3^{33,\alpha\beta}X_\alpha X_\beta\equiv 0,\,\,
X\in{\mathcal N}^{(c_0)},\label{assumption3}\\
&
F_2^{13,\alpha\beta}=0
%,\,\,F_2^{31,\alpha\beta}=0
\mbox{\,\,for any $\alpha$, $\beta$},\label{assumption4}\\
&
F_3^{13,\alpha\beta}=0
%,\,\,F_3^{31,\alpha\beta}=0
\mbox{\,\,for any $\alpha$, $\beta$,}\label{assumption5}\\
&
F_3^{11,\alpha\beta}X_\alpha X_\beta
=F_3^{12,\alpha\beta}X_\alpha X_\beta\equiv 0,
\,\,X\in{\mathcal N}^{(1)},\label{assumption6}
\end{align}
which means that 
since the condition (\ref{assumption1}) is weaker than 
(\ref{null1}) with $i=1$, 
the nonlinear term such as 
\begin{equation}\label{modelterm2019502}
F_1=(\partial_t u_2)^2
+
(\partial_t u_1)(\partial_t u_2)
+
\bigl(
(\partial_t u_1)^2-|\nabla u_1|^2
\bigr)
+
C_1(\partial u_1,\partial u_2,\partial u_3)
\end{equation}
is admissible. Also, any cubic term is admissible. 
On the other hand, we need the restrictive conditions 
(\ref{assumption4})--(\ref{assumption6}) 
in order to obtain a mildly growing bound 
for the high energy estimate of $u_2$, $u_3$, 
though readers might expect to benefit from 
difference of propagation speeds. 
Before stating the main theorem, 
we set the notation. 
We use the operators 
$\Omega_{jk}:=x_j\partial_k-x_k\partial_j$, 
$1\leq j<k\leq 3$ and 
$S:=t\partial_t+x\cdot\nabla$. 
The operators 
$\partial_1$, $\partial_2$, $\partial_3$, 
$\Omega_{12}$, $\Omega_{23}$, $\Omega_{13}$ 
and $S$ are denoted by 
$Z_1$, $Z_2,\dots,Z_7$, respectively. 
For multi-indices $a=(a_1,a_2,\dots,a_7)$, 
we set $Z^a:=Z_1^{a_1}Z_2^{a_2}\cdots Z_7^{a_7}$. 
Setting 
$$
E(v(t);c):=
\frac12
\int_{{\mathbb R}^3}
\bigl(
(\partial_t v(t,x))^2
+
c^2
|\nabla v(t,x)|^2
\bigr)dx
$$
for $c>0$, we define
\begin{equation}
N_{\kappa}(v(t);c)
:=
\biggl(
\sum_{|a|\leq\kappa-1}
E(Z^a v(t);c)
\biggr)^{1/2},\,\,
\kappa\in{\mathbb N}.
\end{equation}
When there is no confusion, 
we abbreviate $N_{\kappa}(v(t);c)$ to $N_{\kappa}(v(t))$. 
%%%%%%%%%%%%%%%%%%%%%%%%%%
To measure the size of data 
$(f,g)$ with $f=(f_1,f_2,f_3)$ and $g=(g_1,g_2,g_3)$, 
we use 
\begin{equation}
\|(f,g)\|_D
:=
\sum_{i=1}^3
\biggl(
\sum_{|a|=1}^4
\|
\langle x\rangle^{|a|-1}\partial_x^a f_i
\|_{L^2({\mathbb R}^3)}
+
\sum_{|a|=0}^3
\|
\langle x\rangle^{|a|}\partial_x^a g_i
\|_{L^2({\mathbb R}^3)}
\biggr).
\end{equation}
%%%%%%%%%%%%%%%%%%%%%%%%%%%%%%%
We are in a position to state the main theorem.
\begin{theorem}\label{ourmaintheorem}
Suppose $c_0\ne 1$ in \mbox{$(\ref{eq1})$} 
and suppose \mbox{$(\ref{assumption1})$--$(\ref{assumption6})$}. 
There exists an $\varepsilon_0>0$ such that 
if the initial data 
$(f_i,g_i)\in C_0^\infty({\mathbb R}^3)\times C_0^\infty({\mathbb R}^3)$ 
$(i=1,2,3)$ satisfy 
$\|(f,g)\|_D<\varepsilon_0$, 
then the Cauchy problem \mbox{$(\ref{eq1})$--$(\ref{data})$} admits 
a unique global solution satisfying 
\begin{align}
&
N_3(u_1(t))
\leq
C\varepsilon_0(1+t)^\delta,\,\,
N_4(u_1(t))
\leq
C\varepsilon_0(1+t)^{2\delta},\label{growth1}\\
&
N_3(u_2(t)),\,N_3(u_3(t))
\leq
C\varepsilon_0,\,\,
N_4(u_2(t)),\,N_4(u_3(t))
\leq
C\varepsilon_0(1+t)^\delta.\label{growth2}
\end{align}
Here $\delta$ is a small constant such that 
$0<\delta<1/24$.
\end{theorem}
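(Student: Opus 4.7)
The plan is a continuity/bootstrap argument. Fix a large constant $A$ and define $T^*$ as the supremum of $T>0$ for which a smooth solution of $(\ref{eq1})$--$(\ref{data})$ exists on $[0,T]$ and satisfies the bounds $(\ref{growth1})$--$(\ref{growth2})$ with $C$ replaced by $A$. Local well-posedness with $C_0^\infty$ data gives $T^*>0$, so it is enough to show that on $[0,T^*)$ each of the four bounds can be improved to have $A$ replaced by $A/2$, provided $\varepsilon_0$ is small enough in terms of $A$; this forces $T^*=+\infty$. The main ingredients would be the Klainerman--Sideris weighted Sobolev inequalities adapted to $Z=\{\partial,\Omega,S\}$ (no Lorentz boost is available since $c_0\ne 1$) and to both wave speeds; Alinhac's ghost-weight energy identity; and the Keel--Smith--Sogge weighted space-time $L^2$ estimate for derivatives.

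From the bootstrap, the Klainerman--Sideris pointwise estimates produce
\[
|\partial Z^au_j(t,x)|\le CA\varepsilon_0\,\langle t+|x|\rangle^{-1}\langle c_jt-|x|\rangle^{-1/2}
\]
for $j=2,3$ and $|a|\le 1$ (with $c_1=c_2=1$, $c_3=c_0$), together with the weaker analog for $u_1$ losing an extra factor $(1+t)^\delta$; tangential derivatives at speed $c_j$ gain an additional $\langle c_jt-|x|\rangle^{-1/2+\epsilon}$. Next I would run the generalized energy identity for $Z^au_i$, commuting $Z^a$ through the wave operator at the corresponding speed and controlling the source $[Z^a,\partial_t^2-c_i^2\Delta]u_i+Z^aF_i$ term by term.

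For $u_2$ and $u_3$ I would multiply the equation by $(\partial_tZ^au_j)e^{-q_j}$, where $q_j$ is Alinhac's ghost weight adapted to the cone $|x|=c_jt$; this produces on the left both $N_\kappa(u_j)^2$ and a space-time integral of the tangential (\emph{good}) derivatives. On the right, the self-speed quadratics are null by $(\ref{assumption2})$, $(\ref{assumption3})$, and $(\ref{assumption6})$, so they decompose as a good derivative times an arbitrary derivative and are absorbed by the ghost-weight term. The vanishings $F_2^{13}=F_3^{13}=0$ in $(\ref{assumption4})$--$(\ref{assumption5})$ remove the most dangerous cross-speed interactions entirely; the remaining mixed-speed products of a speed-$1$ factor with a speed-$c_0$ factor enjoy the separation $\langle t-|x|\rangle+\langle c_0t-|x|\rangle\ge\tfrac12|1-c_0|t$, so at least one of the two pointwise decay factors gains the missing $\langle t\rangle^{-1}$, playing the role of a null condition for distinct speeds. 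Gronwall's lemma then closes to give $N_3(u_j)(t)\le C\varepsilon_0$ and $N_4(u_j)(t)\le C\varepsilon_0(1+t)^\delta$ for $j=2,3$.

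For $u_1$ the self-interaction is null by $(\ref{assumption1})$ and handled by the standard null-form bound. The genuinely non-null quadratics such as $(\partial u_2)^2$ and $(\partial u_1)(\partial u_2)$ are estimated by placing one factor in $L^\infty_x$ using the previously improved pointwise decay for $u_2,u_3$, and the other in $L^2_x$, invoking the KSS-type estimate to control the $\langle x\rangle^{-1/2-\mu}\partial Z^au_i$ contributions in $L^2_tL^2_x$. This produces a differential inequality of the form
\[
\tfrac{d}{dt}N_\kappa(u_1(t))^2\le CA\varepsilon_0\,\langle t\rangle^{-1}N_\kappa(u_1(t))^2+C(A\varepsilon_0)^3h(t),
\]
with $h$ integrable in $t$, whose integration yields exactly the rates $(1+t)^\delta$ for $\kappa=3$ and $(1+t)^{2\delta}$ for $\kappa=4$. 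The \emph{main obstacle}, and the very reason for imposing $(\ref{assumption4})$--$(\ref{assumption6})$, lies in the $u_2,u_3$ block: the slow growth of $u_1$ must not leak into $N_3(u_2)$ or $N_3(u_3)$, and it is precisely the combination of those vanishings/null structures with Alinhac's ghost weight and the speed-separation mechanism that keeps the dangerous $u_1$-dependent quadratic interactions benign.
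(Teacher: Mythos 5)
Your overall architecture (bootstrap continuity argument, Klainerman--Sideris weighted inequalities without boosts, Alinhac's ghost weight, Keel--Smith--Sogge space-time $L^2$ bounds, the speed-separation mechanism for the $(2,3)$ and $(3,3)$ interactions, and the role of (\ref{assumption4})--(\ref{assumption5})) coincides with the paper's. But there is a concrete gap in your treatment of $u_1$: you apply the ghost weight only to $u_2,u_3$ and assert that the null self-interaction of $u_1$ is ``handled by the standard null-form bound.'' At top order this fails by loss of derivatives, and this is precisely the difficulty the paper singles out as its main point. Indeed, for $|a|=3$ the source contains $\tilde F_1^{11,\alpha\beta}(\partial_\alpha Z^{a'}u_1)(\partial_\beta Z^{a''}u_1)$ with $|a''|=3$; the null-form estimate (\ref{htd20190718}) produces the factor $|T^{(1)}Z^{a''}u_1|$, and converting it into decay via Lemma \ref{lemma22ofzha} requires $Z Z^{a''}u_1$, i.e.\ four generators in $L^2$, which is $N_5(u_1)$ and cannot be closed. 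Treating the product generically instead (pure KSS, no null structure) also fails in the exterior region $\{r>(c_*/2)t+1\}$: there $r\sim\tau$ and, given the admitted growth $N_4(u_1)\lesssim\langle t\rangle^{2\delta}$ and $\int_0^t L(u_1)^2\lesssim \langle t\rangle^{1/2+4\delta}$, the resulting time integral is not $o(\langle t\rangle^{4\delta})$. The paper's resolution is to run the ghost-weight identity for the $u_1$ equation as well, so that the \emph{space-time} $L^2$ norm $G(u_1;1)$ of the good derivatives $T^{(1)}Z^a u_1$ appears on the left; the dangerous factor $T^{(1)}Z^{a''}u_1$ is then kept in that norm (no conversion, no extra derivative) while the companion factor $\partial Z^{a'}u_1$ is measured with the interpolated trace inequality (\ref{interp}) with $\theta=(1/2)-\eta$, see (\ref{2019july251730}); in the interior region the null structure is abandoned altogether in favor of the KSS norm $L(u_1)$, see (\ref{highenergyj112019july25}). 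Without this step your high-energy estimate for $u_1$ does not close.

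Two smaller points. First, you state that Gronwall ``yields exactly'' the rates $(1+t)^\delta$ for $N_3(u_1)$ and $(1+t)^{2\delta}$ for $N_4(u_1)$, but the reason the two rates must differ is quantitative and should be exhibited: the worst term is $(\partial Z^{a'}u_2)(\partial u_1)(\partial_t Z^a u_1)$ with $|a'|=3$, which contributes $\varepsilon^3\langle\tau\rangle^{-1+2\delta+\sigma}$ when $N_4(u_1)\lesssim\varepsilon\langle\tau\rangle^{\sigma}$ (one $\delta$ from $N_4(u_2)$, one from the weaker pointwise decay of $\partial u_1$), and closing requires $2\delta+\sigma\le 2\sigma$, i.e.\ $\sigma\ge 2\delta$; compare (\ref{2019july191641})--(\ref{2019july191656}). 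Second, the KSS estimate (\ref{l2spacetime}) carries the prefactor $(1+T)^{-2\mu}$, so to extract the bound on $\int_0^t L(u_k)^2$ that your argument uses one must dyadically decompose the time interval as in (\ref{2019aug171715}); this should be said explicitly, since without it the space-time estimate gives nothing global.
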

%%%%%%%%%%%%%%%%%%%%%%%%%%%%%%%%%%%%
\begin{remark}
Using (\ref{ellinfty}), (\ref{2019mninequality}) with $\mu=3$, and (\ref{j3}), 
we see that the solution $(u_1,u_2,u_3)$ in 
Theorem \ref{ourmaintheorem} satisfies
\begin{equation}
\|\partial u_1(t)\|_{L^\infty({\mathbb R}^3)}
=
O(t^{-1+\delta}),
\,\,
\|\partial u_i(t)\|_{L^\infty({\mathbb R}^3)}
=
O(t^{-1}),\,i=2,3
\end{equation}
as $t\to\infty$.
\end{remark}
%%%%%%%%%%%%%%%%%%%%%%%%%%%%%%%%%%%%
\begin{remark}
Suppose that the system (\ref{eq1}) satisfies the assumptions 
(\ref{assumption1})--(\ref{assumption6}) in Theorem \ref{ourmaintheorem}. 
The referee has kindly pointed out that 
if we ignore the third line of (\ref{eq1}) and 
remove $u_3$ from the first two lines, 
then the remaining 2-component system satisfies 
the weak null condition in Alinhac 
\cite[see (AA)--(${\overline{\rm AA}}$)]{Al2006}. 
Also, if we ignore the first line of (\ref{eq1}) and 
remove $u_1$ from the last two lines, 
then the remaining 2-component system satisfies 
the null condition in Yokoyama \cite{Y} and 
Sideris-Tu \cite{SiderisTu}. 
The assumptions (\ref{assumption1})--(\ref{assumption6}) 
are considerably weaker than those in \cite{PS} indeed, 
but they still seem restrictive. 
There arises a natural question to what extent 
we can weaken (\ref{assumption1})--(\ref{assumption6}). 
In this regard, one might want to ask whether or not 
the 3-component system (\ref{eq1}) admit a unique global solution 
for small, smooth data, 
if Alinhac's conditions 
(AA) and (${\overline{\rm AA}}$) are satisfied 
by the 2-component system 
derived by removal of $u_3$ from (\ref{eq1}) 
and 
the null condition in \cite{Y} and \cite{SiderisTu} is satisfied 
by the 2-component system 
derived by removal of $u_1$ from (\ref{eq1}). 
This seems to the authors an interesting open question. 
\end{remark}
%%%%%%%%%%%%%%%%%%%%%%%
Differently from the space-time resonance method of 
Pusateri and Shatah \cite{PS}, 
the proof of our main theorem employs the method of 
Klainerman and Sideris \cite{KS} 
which is the energy method involving 
the generators of the translations, 
the spatial rotations, 
and the dilations. 
It does not involve the generators of the 
hyperbolic rotations, 
and has successfully led to results of 
global existence of small solutions 
under the null condition, 
for systems of multiple-speed wave equations 
\cite{SiderisTu}, 
and for the equation of elasticity \cite{Sideris1996}, \cite{Sideris2000}. 
Unlike the system considered in Sideris and Tu \cite{SiderisTu}, 
the system (\ref{eq1}) is permitted to involve the term $(\partial_t u_2)^2$ 
or $(\partial_t u_1)(\partial_t u_2)$ 
in the first equation (see (\ref{modelterm2019502}) above), 
and the presence of terms violating the null condition 
causes a weaker decay of $\partial u_1$ as $t\to\infty$. 
Therefore, we must enhance the discussion in \cite{SiderisTu}, 
although we basically follow their argument 
based on the two-energy method. 
We recall that the proof of global existence in \cite{SiderisTu} 
employed the ``high energy'' estimate 
and the ``low energy'' estimate, 
allowing the bound in the former estimate to grow mildly in time, 
and establishing the uniform (in time) bound in the latter estimate 
by virtue of the null condition and the difference of 
propagation speeds. 
%As is well known, 
%this two-energy method works very effectively 
%for the proof of global existence of small solutions 
%to systems of multiple-speed wave equations. 
We note that 
because of the problem of ``loss of derivatives'' 
caused by the use of the standard estimation lemma 
for the null forms (see \cite[Lemma 5.1]{SiderisTu}), 
it is only for the estimate of 
the {\it low} energy that 
the null condition plays a role in \cite{SiderisTu}. 
In the present case, 
owing to the weaker decay of $\partial u_1$, 
even a mildly growing bound in the high energy estimate is far from trivial. 
A similar difficulty already occurred in the proof of 
Alinhac \cite{Al2001} for global existence of small solutions 
to the null-form quasilinear (scalar) wave equations in {\it two} space dimensions. 
(Recall that the time decay rate of solutions in two space dimensions is worse 
than in three space dimensions.) 
Creating the ghost weight energy method, 
%he used a certain resulting $L^2$ space-time 
%weighted norm for some special derivatives 
he succeeded in employing the null condition 
for the purpose of establishing 
a mildly growing bound in the {\it high} energy estimate. 
(See also \cite{ZhaDCDS} for this matter.) 
Alinhac set up his remarkable method by relying upon the generators of the hyperbolic rotations, and we note that  
his technique, combined with the method of Klainerman and Sideris, 
remains useful without such operators. 
See \cite{ZhaDCDS}, \cite{Zha}, and \cite{HZ2019}. 
In order to obtain such an estimate 
for the high energy, 
we can therefore rely upon the ghost weight technique 
and utilize a certain $L^2$ space-time weighted norm 
for the special derivatives $\partial_j u_1+(x_j/|x|)\partial_t u_1$ 
along with the estimation lemma (see Lemma \ref{estimationlemma} below), 
when handling such a null-form nonlinear term as 
$\bigl(
(\partial_t u_1)^2-|\nabla u_1|^2
\bigr)$ 
(see (\ref{modelterm2019502}) above) 
on the region ``far from the origin'', that is, 
$\{x\in{\mathbb R}^3\,:\;|x|>(1+t)/2\}$. 

Actually, this way of handling the null-form nonlinear term 
$\bigl(
(\partial_t u_1)^2-|\nabla u_1|^2
\bigr)$ 
is effective only on the region ``far from the origin'', 
because in the present paper, 
the $L^2$ space-time weighted norm for the special derivatives 
is employed in combination with the trace-type inequality 
with the weight $r^{1-\eta}\langle t-r\rangle^{(1/2)+\eta}$ 
(see (\ref{interp}) with $\theta=(1/2)-\eta$ below, here $\eta>0$ is small enough) 
and the factor $r^{1-\eta}$ no longer yields 
the decay factor $t^{-1+\eta}$ on the region ``inside the cone'' 
$\{x\in{\mathbb R}^3\,:\;|x|<(1+t)/2\}$. 
As in \cite{SiderisTu}, inside the cone we therefore give up benefiting from 
the special structure that the null-form nonlinear terms enjoy, 
and we regard them simply as products of the derivatives, 
when considering the high energy estimate of $u_1$. 
Because of the growth of the bound even in the low energy estimate 
for $u_1$, we then proceed differently from \cite{SiderisTu}. 
Namely, we make use of the Keel-Smith-Sogge type $L^2$ weighted norm 
for usual derivatives (see Lemma \ref{ksstype} below) 
together with the trace-type inequality with weight 
$r^{1/2}\langle t-r\rangle$ 
(see (\ref{interp}) with $\theta=0$ below). 
See, e.g., (\ref{highenergyj112019july25}) below. 
In this way, such a null-form nonlinear term as 
$\bigl(
(\partial_t u_1)^2-|\nabla u_1|^2
\bigr)$ 
is no longer the hurdle 
to establishing a mildly growing bound in the high energy estimate 
of $u_1$. 

Because of the weaker decay of $\partial u_1$ 
and the mildly growing bound in the high energy estimate 
of $u_2$ (see (\ref{growth2}) above), 
the presence of such a term as 
$(\partial_t u_1)(\partial_t u_2)$ 
also causes another difficulty in establishing 
a mildly growing bound in the energy estimate of $u_1$. 
This is the reason why we use different growth rates 
for the high energy and the low energy of $u_1$ 
(see the factors $(1+t)^{2\delta}$ and $(1+t)^\delta$ in 
(\ref{growth1}) above) 
for the purpose of closing the argument. 
See (\ref{2019july191626})--(\ref{2019july191656}) below. 

This paper is organized as follows. 
In the next section, we first recall some special properties that 
the null-form nonlinear terms enjoy, 
and then we recall several key inequalities 
that play an important role in our arguments. 
Section \ref{sectionmmm} is devoted to obtaining bounds for 
certain weighted $L^2({\mathbb R}^3)$-norms of 
the second or higher-order derivatives of solutions. 
We carry out the energy estimate 
and the $L^2$ weighted space-time estimate 
in Sections \ref{sectionenergy} and \ref{l2weighted}, 
using the ghost weight method of Alinhac 
and the Keel-Smith-Sogge type estimate, 
respectively. 
In the final section, we complete the proof of 
Theorem \ref{ourmaintheorem} by using the method of continuity.

{\it Acknowledgments.} The problem of global existence for 
systems of multiple-speed wave equations 
violating the standard null condition was 
suggested by Thomas C.\,Sideris at Tohoku University in July, 2017, 
for which the authors are very grateful to him. 
Special thanks also go to the referee for a valuable comment concerning 
the weak null condition in \cite{Al2006}.
%%%%%%%%%%%%%%%%%%%%%%%%%%%%%%%%%%%%%%%%%%%
%%%%%%%%%%%%%%%%%%%%%%%%%%%%%%%%%%%%%%%%%%%%
\section{Preliminaries}
We need the commutation relations. 
Let $[\cdot,\cdot]$ be the commutator: 
$[A,B]:=AB-BA$. 
It is easy to verify that 
\begin{eqnarray}
& &
[Z_i,\Box_c]=0\,\,\,\mbox{for $i=1,\dots,6$},\,\,\,
[S,\Box_c]=-2\Box_c,
\label{eqn:comm1}\\
& &
[Z_j,Z_k]=\sum_{i=1}^7 C^{j,k}_i Z_i,\,\,\,
j,\,k=1,\dots,7,
\label{eqn:comm2}\\
& &
[Z_j,\partial_k]
=
\sum_{i=1}^3 C^{j,k}_i\partial_i,\,\,\,j=1,\dots,7,\,\,k=1,2,3,
\label{eqn:comm3}\\
& &
[Z_j,\partial_t]=0,\,j=1,\dots,6,\quad [S,\partial_t]=-\partial_t
\label{eqn:comm4}.
\end{eqnarray}
Here $\Box_c:=\partial_t^2-c^2\Delta$, and 
$C^{j,k}_i$ denotes a constant depending on 
$i$, $j$, and $k$.
%%%%%%%%%%%%%%%%%%%%%%%%%%%%%%%%%%%%%%%%%5

The next lemma states that the null form is preserved 
under the differentiation. 
Recall the definition of ${\mathcal N}^{(c)}$ 
(see (\ref{definitionnc})). 
%%%%%%%%%%%%%%%%%%%%%%%%%%%%%%%%%%%%%%%%%%%
\begin{lemma}\label{nullpreserved}
Let $c>0$. Suppose that $\{H^{\alpha\beta}\}$ satisfies 
\begin{equation}\label{nullformlemma}
H^{\alpha\beta}X_\alpha X_\beta=0
\mbox{\,\,for any $X\in{\mathcal N}^{(c)}$}. 
\end{equation}
For any $Z_i$ $(i=1,\dots,7)$, the equality 
\begin{align}
Z_i&\bigl(H^{\alpha\beta}
(\partial_\alpha v)(\partial_\beta w)\bigr)\\
&=
H^{\alpha\beta}
(\partial_\alpha Z_i v)
(\partial_\beta w)
+
H^{\alpha\beta}
(\partial_\alpha v)
(\partial_\beta Z_i w)
+
{\tilde H}_i^{\alpha\beta}
(\partial_\alpha v)
(\partial_\beta w)\nonumber
\end{align}
holds with the new coefficients 
$\{{\tilde H}_i^{\alpha\beta}\}$ 
also satisfying $(\ref{nullformlemma})$. 
\end{lemma}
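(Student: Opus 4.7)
The plan is to compute $Z_i\bigl(H^{\alpha\beta}(\partial_\alpha v)(\partial_\beta w)\bigr)$ via the Leibniz rule and then push each $Z_i$ past $\partial_\alpha$ using the commutation relations (\ref{eqn:comm3})--(\ref{eqn:comm4}), at the cost of a residual made of first derivatives with constant coefficients. Since each commutator $[Z_i,\partial_\alpha]$ is a linear combination of $\partial_\gamma$ with constant coefficients, expanding gives the two ``good'' terms $H^{\alpha\beta}(\partial_\alpha Z_i v)(\partial_\beta w) + H^{\alpha\beta}(\partial_\alpha v)(\partial_\beta Z_i w)$ together with a residual that can be rewritten, after relabelling dummy indices, as $\tilde H_i^{\alpha\beta}(\partial_\alpha v)(\partial_\beta w)$, where each $\tilde H_i^{\alpha\beta}$ is an explicit linear combination of entries of $H$.

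The remaining task is to verify that each $\tilde H_i$ still satisfies the null condition (\ref{nullformlemma}). I would split into three cases according to the type of $Z_i$. For the spatial translations $Z_i = \partial_i$ ($i=1,2,3$) the relevant commutators vanish identically, so $\tilde H_i \equiv 0$. For the scaling $Z_7 = S$, the identities $[S,\partial_\alpha] = -\partial_\alpha$ (valid for every $\alpha$: (\ref{eqn:comm4}) handles $\alpha = 0$, and $[S,\partial_k]=-\partial_k$ for spatial $k$ is a one-line computation) yield $\tilde H_7^{\alpha\beta} = -2 H^{\alpha\beta}$, which plainly inherits the null condition. Only the rotations $Z_i = \Omega_{jk}$ ($i = 4,5,6$) require substantive work.

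For the rotations, using $[\Omega_{jk},\partial_0] = 0$ and $[\Omega_{jk},\partial_l] = \delta_{kl}\partial_j - \delta_{jl}\partial_k$, one reads off $\tilde H_i^{\alpha\beta}$ explicitly as a linear combination of four entries of $H$. To verify the null condition, I would invoke the standard characterization: $\{H^{\alpha\beta}\}$ satisfies (\ref{nullformlemma}) if and only if its symmetric part is a scalar multiple of $\mathrm{diag}(1,-c^2,-c^2,-c^2)$, while the antisymmetric part is unconstrained. Decomposing $H$ accordingly, the antisymmetric part contributes zero to $\tilde H_i^{\alpha\beta}X_\alpha X_\beta$ by direct antisymmetry manipulations, and for the symmetric (rotationally invariant) part a short computation using the $\Omega_{jk}$-invariance of $\delta_{lm}$ shows that $\tilde H_i^{\alpha\beta} X_\alpha X_\beta$ vanishes identically on all of $\mathbb R^4$, which is even stronger than required.

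The only (mild) obstacle is the rotation case, where one must carry out the index bookkeeping to extract $\tilde H_i^{\alpha\beta}$ and then confirm the null-form property. Conceptually, the lemma merely reflects the invariance of $\mathcal N^{(c)}$ under translations, spatial rotations, and Euclidean dilations, so the property ``vanishing on $\mathcal N^{(c)}$'' automatically descends to the differentiated coefficient matrices.
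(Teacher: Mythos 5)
Your proof is correct and follows essentially the standard route: the paper itself gives no proof but cites Alinhac \cite[pp.\,91--92]{Al2010}, where the argument is the same commutator computation combined with the decomposition of a null form into its symmetric part (a multiple of $\mathrm{diag}(1,-c^2,-c^2,-c^2)$, i.e. the form $Q_0$) and its antisymmetric part (the forms $Q_{\alpha\beta}$), each of which is checked to be preserved under the translations, rotations, and scaling. Your case analysis ($\tilde H_i\equiv 0$ for $\partial_i$, $\tilde H_7=-2H$ for $S$, and the rotation bookkeeping via $[\Omega_{jk},\partial_l]=\delta_{kl}\partial_j-\delta_{jl}\partial_k$) is accurate, so there is nothing to add.
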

%%%%%%%%%%%%%%%%%%%%%%
See, e.g., \cite[pp.\,91--92]{Al2010} for the proof. 
It is possible to show the following lemma 
essentially in the same way as in \cite[pp.\,90--91]{Al2010}. 
%%%%%%%%%%%%%%%%%%%%%%%%%%%%%%%%%%%%%%
\begin{lemma}\label{estimationlemma}
Suppose that 
$\{H^{\alpha\beta}\}$ satisfies $(\ref{nullformlemma})$ 
for some $c>0$. 
With the same $c$ as in $(\ref{nullformlemma})$, 
we have for smooth functions $v(t,x)$ and $w(t,x)$
%%%%%%%%%%%%%%%%%%%%%%%%%%%%%%%%%
\begin{equation}\label{htd20190718}
|
H^{\alpha\beta}
(\partial_\alpha v)
(\partial_\beta w)
|
\leq
C
\bigl(
|T^{(c)} v|
|\partial w|
+
|\partial v|
|T^{(c)} w|
\bigr).
\end{equation}
%%%%%%%%%%%%%%%%%%%%%%%%%%%%%%%%%
\end{lemma}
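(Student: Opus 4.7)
The plan is to establish the pointwise inequality by performing a null-frame decomposition of each spatial derivative and using hypothesis (\ref{nullformlemma}) to eliminate the one resulting product in which both factors are ``bad''. First I would introduce the good derivatives adapted to the cone of $\Box_c$, namely
$$
T_j^{(c)} v := \partial_j v + \frac{x_j}{c|x|}\,\partial_t v,\qquad j=1,2,3,
$$
which reduce to the special derivatives $\partial_j v + (x_j/|x|)\partial_t v$ mentioned in the introduction when $c=1$. Writing $\omega_j := x_j/|x|$, these satisfy the identity $\partial_j v = T_j^{(c)} v - (\omega_j/c)\,\partial_t v$, giving a splitting of each spatial derivative into a piece tangent to the characteristic cone and a purely time-like remainder.

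Next I would substitute this splitting into $H^{\alpha\beta}(\partial_\alpha v)(\partial_\beta w)$ and expand. Every product in the expansion either contains at least one factor $T_j^{(c)} v$ or $T_j^{(c)} w$ --- and these will be absorbed directly into the right-hand side of (\ref{htd20190718}) --- or is a scalar multiple of $(\partial_t v)(\partial_t w)$. Collecting the coefficients of the latter yields
$$
\Bigl[\,H^{00} - \frac{\omega_j}{c}\,(H^{0j}+H^{j0}) + \frac{\omega_j \omega_k}{c^2}\,H^{jk}\,\Bigr](\partial_t v)(\partial_t w).
$$
The key observation is that the bracketed expression equals $H^{\alpha\beta} X_\alpha X_\beta$ for the vector $X := (1,-\omega_1/c,-\omega_2/c,-\omega_3/c)$. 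Since $X_0^2 = 1 = c^2|\omega|^2/c^2 = c^2(X_1^2+X_2^2+X_3^2)$, we have $X \in \mathcal{N}^{(c)}$, and so hypothesis (\ref{nullformlemma}) forces this coefficient to vanish.

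With the ``bad $\times$ bad'' contribution eliminated, every surviving term carries at least one good-derivative factor; the remaining factor is controlled trivially by $|\partial v|$ or $|\partial w|$, and the coefficients are bounded in terms of $c$ and the entries of $H^{\alpha\beta}$. Collecting these estimates yields (\ref{htd20190718}) with a constant $C$ depending only on $c$ and $\{H^{\alpha\beta}\}$. The whole argument is a pointwise algebraic identity requiring no integration by parts, so I do not anticipate a real analytic obstacle; the only step meriting a moment's attention is identifying the coefficient of $(\partial_t v)(\partial_t w)$ with $H^{\alpha\beta}$ evaluated on a null vector in $\mathcal{N}^{(c)}$, which is the conceptual heart of the null condition and the one place where the hypothesis enters.
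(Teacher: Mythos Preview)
Your argument is correct and is exactly the null-frame decomposition the paper has in mind when it cites Alinhac \cite[pp.\,90--91]{Al2010}: split each $\partial_j$ into a good derivative plus a multiple of $\partial_t$, observe that the coefficient of the sole bad--bad term $(\partial_t v)(\partial_t w)$ is $H^{\alpha\beta}X_\alpha X_\beta$ evaluated on a vector in $\mathcal N^{(c)}$, and conclude. One minor point of normalization: the paper's $T_k^{(c)}$ is $c\partial_k+(x_k/|x|)\partial_t$ (see (\ref{2019Nov16OK})), not your $\partial_k+(x_k/(c|x|))\partial_t$; the two differ only by the scalar $c$, so your computation still yields (\ref{htd20190718}) after absorbing that factor into $C$.
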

%%%%%%%%%%%%%%%%%%%%%%%%%%%%%%%%
Here, and in the following, we use the notation 
%%%%%%%%%%%%%%%%%%%%%%%%%%%%%%%%%%%
\begin{equation}\label{2019Nov16OK}
|T^{(c)}v|
:=
\biggl(
\sum_{k=1}^3
|T_k^{(c)} v|^2
\biggr)^{1/2},\quad
%|T^{(c)}\partial v|
%:=
%\biggl(
%\sum_{k=1}^3
%\sum_{\gamma=0}^3
%|T_k^{(c)}\partial_\gamma v|^2
%\biggr)^{1/2}
T^{(c)}_k:=c\partial_k+(x_k/|x|)\partial_t.
\end{equation}
%%%%%%%%%%%%%%%%%%%%%%%%%%%%%%%%%%%
Together with (\ref{htd20190718}), 
we will later exploit the fact that 
for local solutions $u$, 
the special derivatives $T^{(c)}_i u$ have 
better space-time $L^2$ integrability, 
in addition to 
improved time decay property of their $L^\infty({\mathbb R}^3)$ norms 
as shown in the following lemma. 
%%%%%%%%%%%%%%%%%%%%%%%
\begin{lemma}[Lemma 2.2 of \cite{Zha}]\label{lemma22ofzha}
Let $c>0$. The inequality
\begin{equation}
|T^{(c)}v(t,x)|
\leq
C
\langle t\rangle^{-1}
\biggl(
|\partial_t v(t,x)|
+
\sum_{i=1}^7
|Z_i v(t,x)|
+
\langle ct-r\rangle|\partial_x v(t,x)|
\biggr)
\end{equation}
holds for smooth functions $v(t,x)$. 
\end{lemma}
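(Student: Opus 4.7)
The plan is to derive the inequality from one pointwise algebraic identity that isolates the weight $(ct-r)$ on $\partial_k v$. Starting from $T_k^{(c)} v = c\partial_k v + (x_k/r)\partial_t v$, I multiply through by $t$ and use $S = t\partial_t + x\cdot\nabla$ to substitute $t\partial_t v = Sv - x\cdot\nabla v$. The remaining factor $x_k(x\cdot\nabla v)$ is rewritten via the elementary rotation identity $x_k\partial_j = x_j\partial_k - \Omega_{jk}$, which gives
$$
x_k(x\cdot\nabla v) = r^2\partial_k v - \sum_j x_j\Omega_{jk} v.
$$
Combining these steps produces, for $t>0$ and $r>0$, the key identity
$$
tT_k^{(c)} v = (ct-r)\,\partial_k v + \frac{x_k}{r}\,Sv + \frac{1}{r}\sum_j x_j \Omega_{jk} v.
$$

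Taking absolute values and applying $|x_k|/r \leq 1$ together with the elementary bound $\bigl|\sum_j x_j\Omega_{jk} v\bigr|\leq r\sum_j|\Omega_{jk} v|$ (triangle inequality with $|x_j|\leq r$) gives
$$
t\,|T_k^{(c)} v| \leq |ct-r|\,|\partial_x v| + |Sv| + \sum_j|\Omega_{jk} v|.
$$
For $t\geq 1$ one has $1/t \leq \sqrt{2}\,\langle t\rangle^{-1}$ and $|ct-r| \leq \langle ct-r\rangle$. Since $Sv = Z_7 v$ and each $\Omega_{jk} v$ belongs to $\{Z_4 v, Z_5 v, Z_6 v\}$, summing over $k=1,2,3$ and taking the Euclidean norm of $(T_1^{(c)} v, T_2^{(c)} v, T_3^{(c)} v)$ yields the claimed bound in this regime.

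The regime $0\leq t<1$ is handled by the trivial estimate $|T_k^{(c)} v|\leq (c+1)|\partial v|$, which is absorbed into $C\langle t\rangle^{-1}(|\partial_t v|+\sum_{i=1}^3|Z_i v|)$ because $\langle t\rangle$ is uniformly bounded there. The boundary $r=0$ is settled by continuity: for smooth $v$, both sides are continuous in $x$, and the apparent $1/r$ singularities cancel because $\Omega_{jk} v$ and $x_k(x\cdot\nabla v)-r^2\partial_k v$ both vanish to appropriate orders at the origin. I do not anticipate any real obstacle; the whole content of the lemma resides in the identity of the first paragraph, a brisk Klainerman-Sideris-type commutator manipulation, and the only delicate point is the decision to multiply by $t$ at the outset, so that the weight $(ct-r)$ lands naturally on $\partial_k v$ rather than on $\partial_t v$.
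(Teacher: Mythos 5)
Your proof is correct and rests on exactly the identity the paper itself invokes, namely $tT_k^{(c)}v=(ct-r)\partial_k v+\frac{x_k}{r}Sv+\frac{1}{r}\sum_j x_j\Omega_{jk}v$ (the paper displays the $k=1$ case right after the lemma). The derivation and the subsequent absorption of the $|x_j|/r$ factors are the same as the intended argument, so there is nothing to add.
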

%%%%%%%%%%%%%%%%%%%%%%%%%%%%%%%%%%%%%%%%%%%
Lemma \ref{lemma22ofzha} is a direct consequence of 
the identity such as 
\begin{equation}
T^{(c)}_1
=
\frac{1}{t}
\biggl(
\frac{x_1}{|x|}S
-
\frac{x_2}{|x|}\Omega_{12}
-
\frac{x_3}{|x|}\Omega_{13}
+
(ct-r)\partial_1
\biggr).
\end{equation}
%%%%%%%%%%%%%%%%%%%%%%%%%%%%%%%%%%%%%
The following lemma is concerned with Sobolev-type 
or trace-type inequalities. 
With $c>0$, the auxiliary norms 
\begin{align}
M_2(v(t);c)
&=
\sum_{{0\leq\delta\leq 3}\atop{1\leq j\leq 3}}
\|\langle ct-|x|\rangle
\partial_{\delta j}^2 v(t)\|_{L^2({\Bbb R}^3)},\label{ksm2}\\
M_\mu(v(t);c)
&=\sum_{|a|\leq \mu-2}M_2(Z^a v(t);c),\,\,\mu=3,4,
\end{align}
which appear in the following discussion, play an intermediate role. 
We remark that $\partial_t^2$ is absent in the right-hand side of (\ref{ksm2}) above. 
We also use the notation 
\begin{align}
&
\|v\|_{L_r^\infty L_\omega^p({\mathbb R}^3)}
:=
\sup_{r>0}
\|v(r\cdot)\|_{L^p(S^2)},\\
&
\|v\|_{L_r^2 L_\omega^p({\mathbb R}^3)}
:=
\biggl(
\int_0^\infty \|v(r\cdot)\|_{L^p(S^2)}^2 r^2dr
\biggr)^{1/2}.
\end{align}
%%%%%%%%%%%%%%%%%%%%%%%%%%%%%%%%%%%%%%%%5
\begin{lemma}\label{someinequalities2019aug17}
Let $c>0$. 
Suppose that $v$ decays sufficiently fast as $|x|\to\infty$. 
The following inequalities hold for $\alpha=0,1,2,3$
\begin{align}
&
\|
\langle ct-r\rangle\partial_\alpha v(t)
\|_{L^6({\mathbb R}^3)}
\leq
C
\bigl(
N_1(v(t))
+
M_2(v(t);c)
\bigr),\label{ell6}\\
&
\langle ct-r\rangle
|\partial_\alpha v(t,x)|
\leq
C
\biggl(
\sum_{|a|\leq 1}N_1(\partial_x^a v(t))
+
\sum_{|a|\leq 1}M_2(\partial_x^a v(t);c)
\biggr).\label{ellinfty}
\end{align}
Moreover, we have 
\begin{align}
&
\|r\partial_\alpha v(t)\|_{L_r^\infty L_\omega^4({\mathbb R}^3)}
\leq
C
\sum_{|a|+|b|\leq 1}
\|\partial\partial_x^a\Omega^b v(t)\|_{L^2({\mathbb R}^3)},\label{j2}
\\
&
\langle r\rangle
|\partial_\alpha v(t,x)|
\leq
C\sum_{|a|+|b|\leq 2}
\|\partial\partial_x^a\Omega^b v(t)\|_{L^2({\mathbb R}^3)}.\label{j3}
\end{align}
\end{lemma}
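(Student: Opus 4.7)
The plan is to obtain each of the four inequalities from standard Sobolev-type embeddings, combined for (\ref{j2}) and (\ref{j3}) with the fundamental theorem of calculus in the radial variable together with the Hardy inequality.

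For (\ref{ell6}) and (\ref{ellinfty}), introduce the weighted function $w := \langle ct-r\rangle\,\partial_\alpha v$. Since $|\nabla_x\langle ct-r\rangle|\leq 1$, one has $|\nabla_x w|\leq |\partial_\alpha v|+\langle ct-r\rangle\,|\nabla_x\partial_\alpha v|$. Every second-order term $\partial_j\partial_\alpha v$ appearing here carries at least one spatial index, hence is captured by $M_2(v;c)$; this is exactly why $\partial_t^2 v$ can be absent from the definition of $M_2$ without harm. Thus $\|\nabla_x w\|_{L^2}\leq C(N_1(v)+M_2(v;c))$, and the Sobolev embedding $\dot H^1(\mathbb R^3)\hookrightarrow L^6(\mathbb R^3)$ applied to $w$ gives (\ref{ell6}). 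For (\ref{ellinfty}) I apply Morrey's embedding $W^{1,6}(\mathbb R^3)\hookrightarrow L^\infty(\mathbb R^3)$ to $w$: the $L^6$ norm of $w$ is controlled by (\ref{ell6}), and the $L^6$ norm of $\nabla_x w$ is controlled by applying (\ref{ell6}) to $\partial_x^a v$ for $|a|\leq 1$.

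For (\ref{j2}), the plan is to combine the spherical Ladyzhenskaya inequality
\begin{equation*}
\|u\|_{L^4(S^2)}^2\leq C\|u\|_{L^2(S^2)}\bigl(\|u\|_{L^2(S^2)}+\|\nabla_{S^2}u\|_{L^2(S^2)}\bigr)
\end{equation*}
with a one-dimensional FTC in the radial direction. For smooth $f$ decaying at infinity, I write $\|rf(r,\cdot)\|_{L^2(S^2)}^2=-\int_r^\infty \partial_\rho\|\rho f\|_{L^2(S^2)}^2\,d\rho$, apply Cauchy--Schwarz on $(r,\infty)\times S^2$ with measure $d\omega\,d\rho$, convert to $dx=\rho^2\,d\omega\,d\rho$, and invoke Hardy's inequality $\int|f|^2|x|^{-2}\,dx\leq C\|\nabla f\|_{L^2}^2$ to absorb the lower-order remainder coming from $\partial_\rho(\rho f)$; this yields the radial trace bound $\sup_r\|rf(r,\cdot)\|_{L^2(S^2)}^2\leq C\|f\|_{L^2}\|\nabla f\|_{L^2}$. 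Inserting this into Ladyzhenskaya -- together with the pointwise inequality $|\Omega g|\leq |x|\,|\nabla g|$ and its FTC counterpart to control $\sup_r\|\Omega f(r,\cdot)\|_{L^2(S^2)}$ -- and specializing to $f=\partial_\alpha v$, with the commutator identity $[\Omega,\partial_\alpha]=\pm\partial_i$ reducing $\Omega\partial_\alpha v$ to $\partial_\alpha\Omega v$ plus a first-order correction, gives (\ref{j2}).

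For (\ref{j3}), I split at $|x|=1$. In the region $|x|\leq 1$, $\langle r\rangle\sim 1$ and the bound follows directly from the Sobolev embedding $H^2(\mathbb R^3)\hookrightarrow L^\infty(\mathbb R^3)$ applied to $\partial_\alpha v$. In the region $|x|\geq 1$, $\langle r\rangle\leq 2r$, and I apply the Sobolev embedding $W^{1,4}(S^2)\hookrightarrow L^\infty(S^2)$ to $r\partial_\alpha v$, take $\sup_r$, and bound each of the two resulting terms via (\ref{j2}); the term involving $\Omega(r\partial_\alpha v)=r\Omega\partial_\alpha v$ contributes one extra rotation derivative, producing the enlarged range $|a|+|b|\leq 2$. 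The main technical hurdle throughout is bookkeeping: all Hardy- or commutator-generated corrections must be organized so that the right-hand side contains only multi-indices permitted in (\ref{j2})--(\ref{j3}). Aside from this organization, each individual step is a routine Sobolev, interpolation, or FTC estimate.
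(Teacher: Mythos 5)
Your arguments for (\ref{ell6}) and (\ref{ellinfty}) are correct, and they are essentially the standard proofs behind the references the paper cites: apply $\dot H^1(\mathbb{R}^3)\hookrightarrow L^6(\mathbb{R}^3)$ and $W^{1,6}(\mathbb{R}^3)\hookrightarrow L^\infty(\mathbb{R}^3)$ to $w=\langle ct-r\rangle\partial_\alpha v$, observing that $\nabla_x w$ only creates second derivatives carrying a spatial index, which is exactly why $\partial_t^2$ may be omitted from $M_2$. (The paper itself gives no proof of this lemma, only citations, so there is no internal argument to compare against.)

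The proof of (\ref{j2}), however, has a genuine gap, located precisely at the step you describe as controlling $\sup_r\|\Omega f(r,\cdot)\|_{L^2(S^2)}$. After applying the spherical Ladyzhenskaya inequality to $\omega\mapsto rf(r\omega)$ with $f=\partial_\alpha v$, the term to be controlled is
\begin{equation*}
\sup_r\Bigl(r\|f(r\cdot)\|_{L^2(S^2)}\cdot r\|\Omega f(r\cdot)\|_{L^2(S^2)}\Bigr),
\end{equation*}
because $\|\nabla_{S^2}\bigl(rf(r\cdot)\bigr)\|_{L^2(S^2)}=r\|\Omega f(r\cdot)\|_{L^2(S^2)}$. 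Your radial FTC/Hardy trace bound gives $\sup_r r\|h(r\cdot)\|_{L^2(S^2)}\leq C(\|h\|_{L^2}\|\nabla h\|_{L^2})^{1/2}$; applied to $h=\Omega f$ it produces $\|\nabla\Omega\partial_\alpha v\|_{L^2}$, i.e.\ a term of the type $\|\partial\partial_x\Omega v\|_{L^2}$ with $|a|+|b|=2$, which is not admissible on the right-hand side of (\ref{j2}). This cannot be repaired by better bookkeeping, because the intermediate estimate
\begin{equation*}
\sup_r r\|\Omega f(r\cdot)\|_{L^2(S^2)}\leq C\bigl(\|f\|_{L^2}+\|\nabla f\|_{L^2}+\|\Omega f\|_{L^2}\bigr)
\end{equation*}
is false. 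Indeed, let $f_N(x)=\chi\bigl(N(|x|-1)\bigr)\psi_N(x/|x|)$, where $\chi$ is a fixed bump and $\psi_N$ is a spherical cap of angular width $1/N$ normalized so that $\|\psi_N\|_{L^2(S^2)}=1$, hence $\|\nabla_{S^2}\psi_N\|_{L^2(S^2)}\sim N$. Then $\|f_N\|_{L^2}\sim N^{-1/2}$ and $\|\nabla f_N\|_{L^2}\sim\|\Omega f_N\|_{L^2}\sim N^{1/2}$, while $\|\Omega f_N(1\cdot)\|_{L^2(S^2)}\sim N$; the left side is of order $N$ and the right side of order $N^{1/2}$. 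Note that (\ref{j2}) itself survives this example (both sides are of order $N^{1/2}$, so the example also shows (\ref{j2}) is sharp); the lesson is that the radial and angular derivatives must be kept in a balanced product throughout, and separating the two suprema, as your plan does, destroys the inequality. The proof in \cite{Sideris2000} of (3.19) is arranged so that $\sup_r r\|\Omega f(r\cdot)\|_{L^2(S^2)}$ never appears in isolation, and you would need to reproduce that structure. Finally, since your derivation of (\ref{j3}) on $\{|x|\geq1\}$ invokes (\ref{j2}) applied to $v$ and to $\Omega v$, it inherits this gap; the remaining ingredients there (splitting at $|x|=1$, $H^2\hookrightarrow L^\infty$ inside, $W^{1,4}(S^2)\hookrightarrow L^\infty(S^2)$ outside, and the count $|a|+|b|\leq2$) are fine once a correct proof of (\ref{j2}) is in hand.
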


Here, we have used the notation 
$\Omega^b:=\Omega_{12}^{b_1}\Omega_{23}^{b_2}\Omega_{13}^{b_3}$ 
for multi-indices $b=(b_1,b_2,b_3)$. 
These inequalities have been already employed in the literature. 
For the proof of (\ref{ell6}), see \cite[(2.10)]{H2016}. 
For the proof of (\ref{ellinfty}), 
see \cite[(37)]{Zha}, \cite[(2.13)]{H2016}. 
See \cite[(3.19)]{Sideris2000} for the proof of (\ref{j2}). 
Finally, combining \cite[(3.14b)]{Sideris2000} 
with the Sobolev embedding $H^2({\mathbb R}^3)\hookrightarrow 
L^\infty({\mathbb R}^3)$, 
we obtain (\ref{j3}).
%%%%%%%%%%%%%%%%%%%%%%%%%%%%%%%%%%%

We also need the following inequality. 
\begin{lemma}
Let $c>0$ and $\alpha=0,1,2,3$. 
Suppose that $v$ decays sufficiently fast as $|x|\to\infty$. 
For any $\theta$ with $0\leq \theta\leq 1/2$, 
there exists a constant $C>0$ such that the inequality 
%%%%%%%%%%%%%%%%%%%%%%%%%%%%%%%%%%%%%%%%%
\begin{equation}\label{interp}
r^{(1/2)+\theta}
\langle ct-r\rangle^{1-\theta}
\|\partial_\alpha v(t,r\cdot)\|_{L^4(S^2)}
\leq
C
\biggl(
\sum_{|a|\leq 1}N_1(\Omega^a v(t))
+
M_2(v(t);c)
\biggr)
\end{equation}
holds.
\end{lemma}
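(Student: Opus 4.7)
The plan is to establish (\ref{interp}) at the two endpoint values $\theta=0$ and $\theta=1/2$, and to recover all intermediate $\theta\in(0,1/2)$ by interpolation. The interpolation step is immediate from the factorization
$$r^{(1/2)+\theta}\langle ct-r\rangle^{1-\theta}\|\partial_\alpha v(t,r\cdot)\|_{L^4(S^2)}=\bigl(r^{1/2}\langle ct-r\rangle\|\partial_\alpha v\|_{L^4(S^2)}\bigr)^{1-2\theta}\bigl(r\langle ct-r\rangle^{1/2}\|\partial_\alpha v\|_{L^4(S^2)}\bigr)^{2\theta}$$
combined with the elementary inequality $a^{1-2\theta}b^{2\theta}\leq\max(a,b)$ for $a,b>0$, so that a single universal constant applies.

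For each endpoint my first move is to reduce from $L^4(S^2)$ to $L^2(S^2)$ by the Sobolev embedding $H^1(S^2)\hookrightarrow L^4(S^2)$,
$$\|\partial_\alpha v(t,r\cdot)\|_{L^4(S^2)}\leq C\sum_{|b|\leq 1}\|\Omega^b\partial_\alpha v(t,r\cdot)\|_{L^2(S^2)},$$
using that the rotation generators $\{\Omega_{jk}\}$ span the tangent bundle of each sphere $\{|x|=r\}$. Commuting $\Omega^b$ past $\partial_\alpha$ via $[\Omega^b,\partial_\alpha]=\sum C\partial$ then reduces the problem to estimating $r^{1/2+\theta}\langle ct-r\rangle^{1-\theta}\|\partial_\beta V(t,r\cdot)\|_{L^2(S^2)}$ for $V\in\{v,\Omega^b v\}$. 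For each such $V$, I apply the one-dimensional radial trace identity with $w=\partial_\beta V$,
$$r^{1+2\theta}\langle ct-r\rangle^{2-2\theta}\|w(t,r\cdot)\|_{L^2(S^2)}^2=-\int_r^\infty\partial_s\bigl[s^{1+2\theta}\langle ct-s\rangle^{2-2\theta}\|w(t,s\cdot)\|_{L^2(S^2)}^2\bigr]\,ds,$$
expand the derivative in $s$, and use Cauchy--Schwarz and AM-GM to bound the three resulting integrals by $\|w\|_{L^2(\mathbb{R}^3)}^2+\|\langle ct-|x|\rangle\partial_r w\|_{L^2(\mathbb{R}^3)}^2$ plus a Hardy-type remainder $\int_{|x|>r}\langle ct-|x|\rangle^{2-2\theta}|w|^2/|x|^2\,dx$. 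That remainder is controlled by the classical Hardy inequality $\int_{\mathbb{R}^3}|f|^2/|x|^2\,dx\leq C\|\nabla f\|_{L^2}^2$ applied to $f=\langle ct-|x|\rangle^{1-\theta}w$, producing exactly the same class of norms.

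The main obstacle is the careful bookkeeping of commutators in the case $V=\Omega^b v$: to see that $\|\langle ct-|x|\rangle\partial_r\partial_\beta V\|_{L^2}$ is absorbed into $M_2(v;c)$, one must pull $\Omega^b$ past $\partial_r\partial_\beta$ and the weight $\langle ct-|x|\rangle$ and verify that every resulting term can be controlled by $\sum_{|a|\leq 1}N_1(\Omega^a v)+M_2(v;c)$. The key structural facts to exploit are that $\Omega^b$ annihilates radial functions such as $\langle ct-|x|\rangle$, that $[\Omega^b,\partial_r]=0$ (so rotation generators commute with purely radial differentiation), and that $[\Omega^b,\partial_\alpha]$ is a zeroth-order commutator in the sense of producing only first-order derivatives. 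The interplay between the weight $r^{1+2\theta}$ (which degenerates as $r\to 0$) and the potentially large $\langle ct-r\rangle^{2-2\theta}$ near small $r$ is precisely what the Hardy inequality step mediates, by converting the radial $L^2$ singularity at the origin into a controlled weighted $L^2(\mathbb{R}^3)$ norm.
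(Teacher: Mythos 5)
Your overall architecture --- prove the two endpoint cases $\theta=0$ and $\theta=1/2$ and recover intermediate $\theta$ from the pointwise factorization $r^{(1/2)+\theta}\langle ct-r\rangle^{1-\theta}=\bigl(r^{1/2}\langle ct-r\rangle\bigr)^{1-2\theta}\bigl(r\langle ct-r\rangle^{1/2}\bigr)^{2\theta}$ together with $a^{1-2\theta}b^{2\theta}\leq a+b$ --- is exactly the paper's (endpoint $\theta=1/2$ via the argument behind (\ref{j2}), endpoint $\theta=0$ via Lemma \ref{traceinequality2019aug17} applied to $\langle ct-r\rangle\partial_\alpha v$, and interpolation in the style of Metcalfe--Nakamura--Sogge). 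Your interpolation step is correct and complete.

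The endpoint argument, however, has a genuine gap, located precisely at what you call ``the main obstacle.'' By invoking the full embedding $H^1(S^2)\hookrightarrow L^4(S^2)$ you commit to running the radial trace identity on $w=\partial_\beta V$ with $V=\Omega^b v$, $|b|=1$, and the integration by parts (and likewise your Hardy step) then produces $\|\langle ct-r\rangle\,\partial_r\partial_\beta\Omega^b v\|_{L^2({\mathbb R}^3)}$. The commutator facts you list ($[\Omega^b,\partial_r]=0$, $\Omega^b$ annihilates radial weights, $[\Omega^b,\partial_\beta]$ is first order) only let you rewrite this, modulo terms controlled by $M_2(v;c)$, as $\|\Omega^b(\langle ct-r\rangle\partial_r\partial_\beta v)\|_{L^2}$; they do not remove the angular derivative, and the $L^2$ norm of $\Omega^b$ applied to a function is of course not bounded by the $L^2$ norm of that function. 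This quantity is irreducibly of third order (two $\partial$'s plus one $\Omega$, with the weight), whereas the right-hand side of (\ref{interp}) contains only unweighted first derivatives of $\Omega^{\leq 1}v$ and weighted second derivatives of $v$ with no $\Omega$ (see (\ref{ksm2})). Hence your method proves the lemma only with $M_2(v;c)$ replaced by $\sum_{|a|\leq 1}M_2(\Omega^a v;c)$, i.e.\ with the loss of one derivative, which is a strictly weaker statement than the one claimed. The repair: for $\theta=0$ no angular derivative is needed at all --- apply (\ref{eqn:hoshiro}) directly to $\langle ct-r\rangle\partial_\alpha v$ and use $|\nabla\langle ct-r\rangle|\leq 1$; for $\theta=1/2$ use the bilinear (half-derivative) form of the sphere Sobolev inequality, $\|f\|_{L^4(S^2)}^2\leq C\|f\|_{L^2(S^2)}\sum_{|b|\leq 1}\|\Omega^b f\|_{L^2(S^2)}$, and arrange the radial integration by parts so that $\partial_r$ and the weight $\langle ct-r\rangle$ fall only on the factor carrying no $\Omega$. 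A further minor point: for $\theta=1/2$ your Hardy-type remainder carries the weight $|x|^{2\theta-2}=|x|^{-1}$ rather than $|x|^{-2}$ as written, though this is harmless.
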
 
%%%%%%%%%%%%%%%%%%%%%%%%%%%%%%%%%%%%%%
Following the proof of \cite[(3.19)]{Sideris2000}, 
we are able to obtain this inequality for $\theta=1/2$. 
The next lemma with 
$v=\langle ct-r\rangle\partial_\alpha w$ immediately 
yields (\ref{interp}) for $\theta=0$. 
We follow the idea in Section 2 of \cite{MNS-JJM2005} and 
obtain (\ref{interp}) for $\theta\in (0,1/2)$ by interpolation. 
%%%%%%%%%%%%%%%%%%%%%%%%%%%%%%%%%%
%%%%%%%%%%%%%%%%%%%%%%%%%%%%%%%%%

In our proof, the trace-type inequality also plays an important role. 
For the proof, see, e.g., \cite[(3.16)]{Sideris2000}.
\begin{lemma}\label{traceinequality2019aug17}
There exists a positive constant $C$ such that 
if $v=v(x)$ decays sufficiently fast as $|x|\to\infty$, 
then the inequality
\begin{equation}
r^{1/2}
\|v(r\cdot)\|_{L^4(S^2)}
\leq
C\|\nabla v\|_{L^2({\mathbb R}^3)}
\label{eqn:hoshiro}
\end{equation}
holds.
\end{lemma}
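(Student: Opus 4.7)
The plan is to exploit the fundamental theorem of calculus along rays from the origin, combined with the critical Sobolev embedding $\dot H^1(\mathbb R^3)\hookrightarrow L^6(\mathbb R^3)$; note that both sides of $(\ref{eqn:hoshiro})$ scale the same way under $v(x)\mapsto v(\lambda x)$, so the weight $r^{1/2}$ is dictated by scaling and one should expect a proof that respects this homogeneity.

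First I would apply the FTC to the map $s\mapsto v(s\omega)^4$: since $v$ decays sufficiently fast as $|x|\to\infty$, for each $\omega\in S^2$ one has
\begin{equation*}
v(r\omega)^4=-\int_r^\infty \partial_s\bigl(v(s\omega)^4\bigr)\,ds=-4\int_r^\infty v(s\omega)^3(\partial_s v)(s\omega)\,ds,
\end{equation*}
so that $|v(r\omega)|^4\leq 4\int_r^\infty|v(s\omega)|^3|\nabla v(s\omega)|\,ds$. Integrating over $\omega\in S^2$ and reverting to Cartesian coordinates via $dx=s^2\,ds\,d\omega$ yields
\begin{equation*}
\|v(r\cdot)\|_{L^4(S^2)}^4\leq 4\int_{|x|>r}\frac{|v(x)|^3|\nabla v(x)|}{|x|^2}\,dx.
\end{equation*}
Multiplying by $r^2$ and using the pointwise inequality $r^2/|x|^2\leq 1$ on $\{|x|>r\}$ gives
\begin{equation*}
r^2\|v(r\cdot)\|_{L^4(S^2)}^4\leq 4\int_{\mathbb R^3}|v(x)|^3|\nabla v(x)|\,dx.
\end{equation*}

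To finish, I would apply the Cauchy--Schwarz inequality in $x$ to obtain $\int|v|^3|\nabla v|\,dx\leq \|v\|_{L^6(\mathbb R^3)}^3\|\nabla v\|_{L^2(\mathbb R^3)}$, and then invoke the Sobolev embedding $\|v\|_{L^6(\mathbb R^3)}\leq C\|\nabla v\|_{L^2(\mathbb R^3)}$ to conclude $\int|v|^3|\nabla v|\,dx\leq C\|\nabla v\|_{L^2(\mathbb R^3)}^4$. Combining with the display above yields $r^2\|v(r\cdot)\|_{L^4(S^2)}^4\leq C\|\nabla v\|_{L^2(\mathbb R^3)}^4$, and the required inequality $(\ref{eqn:hoshiro})$ follows upon taking fourth roots.

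The argument is quite elementary and I do not anticipate any serious obstacle; the only delicate step is the use of $r^2/|x|^2\leq 1$ on $\{|x|>r\}$, which is precisely what produces the sharp weight $r^{1/2}$ on the left-hand side. A more abstract alternative would be to reduce to $r=1$ by scaling and then chain together the Sobolev trace theorem $H^1(B_1)\to H^{1/2}(S^2)$, the critical embedding $H^{1/2}(S^2)\hookrightarrow L^4(S^2)$, and Hardy's inequality in $\mathbb R^3$ (which converts $\|v\|_{L^2(B_1)}$ into $\|\nabla v\|_{L^2(\mathbb R^3)}$ thanks to the decay of $v$), but the direct calculation above sidesteps any borderline Besov-space technicalities.
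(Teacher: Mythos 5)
Your proof is correct, and it is essentially the standard argument behind the trace inequality that the paper simply cites from Sideris (Ann.\ of Math.\ 151 (2000), (3.16)): integrate $\partial_s(v(s\omega)^4)$ along rays, convert to a solid integral over $\{|x|>r\}$, absorb the weight via $r^2/|x|^2\le 1$, and close with Cauchy--Schwarz and the embedding $\dot H^1\hookrightarrow L^6$. No issues.
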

%%%%%%%%%%%%%%%%%%%%%%%%%%%%%%%%%%
%%%%%%%%%%%%%%%%%%%%%%%%%%%%%%%%%%%%
Differently from the analysis in Sideris and Tu \cite{SiderisTu}, 
we need the space-time $L^2$ estimate 
because of the growth of the bound not only in the high energy estimate 
but also in the low energy estimate. 
The following one corresponds to the special case of 
\cite[Theorem 2.1]{HWY2012Adv}.
%%%%%%%%%%%%%%%%%%%%%%%%%%%%%%%%%%%%%%%%%%%%%5
\begin{lemma}\label{ksstype}
Let $c>0$ and $0<\mu<1/2$. Then, there exists a positive constant $C$ 
depending on $c$ and $\mu$ such that 
the inequality 
\begin{align}\label{l2spacetime}
(1&+T)^{-2\mu}
\left(
\|
r^{-(3/2)+\mu}w
\|^2_{L^2((0,T)\times{\mathbb R}^3)}
+
\|
r^{-(1/2)+\mu}
\partial w
\|^2_{L^2((0,T)\times{\mathbb R}^3)}
\right)
\\
&
\leq
C\|\partial w(0,\cdot)\|^2_{L^2({\mathbb R}^3)}
+C
\int_0^T\!\!\!\int_{{\mathbb R}^3}
\left(
|\partial w||\Box_c w|
+
\frac{|w||\Box_c w|}{r^{1-2\mu}\langle r\rangle^{2\mu}}
\right)dxdt\nonumber
\end{align}
holds for smooth functions $w(t,x)$ 
compactly supported in $x$ for any fixed time.
\end{lemma}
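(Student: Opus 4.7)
The plan is to apply the multiplier method with a time-independent radial multiplier and then exploit the support of $w$ to convert powers of $r$ into powers of $1+T$ on the boundary and source contributions.

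I would introduce the multiplier
$$Mw := \phi(r)\,\partial_r w + \frac{\phi(r)}{r}\,w,\qquad \phi(r):=r^{2\mu},$$
which commutes with $\partial_t$. I would pair $\Box_c w$ with $Mw$ and integrate over $(0,T)\times\mathbb{R}^3$. Integration by parts in $t$ handles the $\partial_t^2 w$ part, producing the time-boundary term $\bigl[\int \partial_t w\,Mw\,dx\bigr]_0^T$ and a bulk contribution $\tfrac12\int\!\!\int \phi'(r)|\partial_t w|^2\,dx\,dt$. Integrating by parts twice in $x$, after decomposing $\nabla w$ into its radial and angular components $\partial_r w$ and $r^{-1}\nabla_\omega w$, handles the $-c^2\Delta w$ part, producing $c^2\int\!\!\int \phi'(r)|\partial_r w|^2\,dx\,dt$ plus a nonnegative angular contribution and lower-order $w/r$ cross terms that are absorbed via the weighted Hardy inequality $\int r^{2\mu-2}|w|^2\,dx\le C\int r^{2\mu}|\nabla w|^2\,dx$.

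With $\phi'(r)=2\mu\,r^{2\mu-1}$, the full identity bounds from below the positive space-time integral
$$c_{\mu,c}\int_0^T\!\!\int_{\mathbb{R}^3} r^{2\mu-1}\bigl(|\partial w|^2+r^{-2}|w|^2\bigr)dx\,dt = c_{\mu,c}\bigl(\|r^{-1/2+\mu}\partial w\|_{L^2_{t,x}}^2+\|r^{-3/2+\mu}w\|_{L^2_{t,x}}^2\bigr).$$
The restriction $0<\mu<1/2$ enters at precisely this step: $\mu>0$ gives coercivity $\phi'>0$, while $\mu<1/2$ is what makes the weighted Hardy inequality available with the correct constant.

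For the right-hand side, the time-boundary term at $t=T$ is controlled by Cauchy--Schwarz and Hardy by $C\int r^{2\mu}|\partial w(T)|^2\,dx$; using the support bound $r\le C(1+T)$ on $\mathrm{supp}\,w(T,\cdot)$ (natural in the wave-equation setting via finite speed of propagation applied to the initial support and source), this is at most $C(1+T)^{2\mu}\|\partial w(T)\|_{L^2}^2$, and the standard energy identity then supplies $\|\partial w(T)\|_{L^2}^2\le C\|\partial w(0)\|_{L^2}^2+C\int_0^T\!\int|\partial w||\Box_c w|\,dx\,dt$. The $t=0$ boundary is dominated directly by $C\|\partial w(0)\|_{L^2}^2$. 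The source term $\bigl|\int\!\!\int (\Box_c w)\,Mw\,dx\,dt\bigr|$ splits into $\int\!\!\int r^{2\mu}|\partial w||\Box_c w|\,dx\,dt$ and $\int\!\!\int r^{2\mu-1}|w||\Box_c w|\,dx\,dt$; using $r^{2\mu}\le C(1+T)^{2\mu}$ on the support for the first, and $r^{2\mu-1}\le C(1+T)^{2\mu}\,r^{2\mu-1}\langle r\rangle^{-2\mu}$ for the second (because $\langle r\rangle\le C(1+T)$ on the support), then dividing the identity through by $(1+T)^{2\mu}$, the claimed inequality (\ref{l2spacetime}) emerges.

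The primary technical obstacle is the careful derivation and positivity analysis of the fundamental identity: the $-c^2\Delta w\cdot Mw$ integration by parts must be carried out with attention to the interaction between the radial and angular components, and the cross terms between $\partial_r w$ and $w/r$ must be absorbed via the weighted Hardy inequality without spoiling the positivity. Once the positivity is secured, the conversion of $r^{2\mu}$ weights on the support into $(1+T)^{2\mu}$ is routine.
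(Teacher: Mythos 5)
The paper does not prove this lemma itself: it quotes it as a special case of Theorem~2.1 of Hidano--Wang--Yokoyama \cite{HWY2012Adv}, whose proof is by exactly the multiplier method you describe, so your strategy is the right one. Your positivity analysis is essentially correct, though one remark: with $\phi(r)=r^{2\mu}$ the zeroth-order bulk term comes out with coefficient proportional to $-\Delta r^{2\mu-1}=-2\mu(2\mu-1)r^{2\mu-3}$, which is positive precisely because $0<\mu<1/2$; it is this sign, rather than the weighted Hardy inequality (which holds for all $\mu>-1/2$), that is the real reason for the restriction $\mu<1/2$.

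The genuine gap is your appeal to the support bound $r\le C(1+T)$ on ${\rm supp}\,w(T,\cdot)$. The lemma assumes only that $w(t,\cdot)$ is compactly supported for each fixed $t$, with no quantitative control, and since $\Box_c w$ is simply whatever it happens to be, finite speed of propagation gives you nothing for a general admissible $w$. For, say, $w(t,x)=\psi(x/R)$ with a fixed bump $\psi$ and $R\gg T$, the three intermediate bounds where you invoke the support --- the boundary bound $\int r^{2\mu}|\partial w(T)|^2dx\le C(1+T)^{2\mu}\|\partial w(T)\|^2_{L^2}$, the bound $r^{2\mu}\le C(1+T)^{2\mu}$ on the first source term, and $\langle r\rangle\le C(1+T)$ on the second --- are all false (the left side of the first is of order $R^{1+2\mu}$ while the right side is of order $(1+T)^{2\mu}R$), even though the final estimate itself survives for such $w$. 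Moreover, even in the paper's application, where the solution is supported in $\{|x|<R+c^*t\}$, your argument would produce a constant depending on $R$, whereas the paper explicitly needs all constants independent of $R$ in order to remove the compact-support hypothesis at the end. The standard repair is to truncate the multiplier: take $\phi_T(r)=\min(r,1+T)^{2\mu}$. This is bounded by $(1+T)^{2\mu}$, so the time-boundary and source contributions are controlled with no support information at all (the kink at $r=1+T$ contributes a surface term of favorable sign), the bulk positivity for $r<1+T$ is unchanged, and the remaining region $r\ge 1+T$ is disposed of by the crude pointwise bound $r^{2\mu-1}\le(1+T)^{2\mu-1}$ combined with the ordinary energy inequality integrated over $[0,T]$ (together with Hardy's inequality for the $r^{2\mu-3}|w|^2$ piece).
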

See also Appendix of \cite{Ster} and \cite{MS-SIAM} for earlier and related estimates. 
At first sight, the above estimate may appear useless 
for the proof of global existence, 
because of the presence of the factor $(1+T)^{-2\mu}$. 
Owing to the useful idea of dyadic decomposition 
of the time interval \cite[p.\,363]{Sogge2003} 
(see also (\ref{2019aug171715}) below), 
the estimate (\ref{l2spacetime}) actually works effectively for the proof of 
global existence.  

The following was proved by Klainerman and Sideris. 
%%%%%%%%%%%%%%%%%%%%%%%%%%%%%%%
\begin{lemma}[Klainerman-Sideris inequality \cite{KS}]\label{lemmaks}
%(Klainerman--Sideris inequality)} 
Let $c>0$. There exists a constant $C>0$ such that 
the inequality
\begin{equation}\label{KSineq}
M_2(v(t);c)
\leq
C
\bigl(
N_2(v(t))
+
t
\|\Box_c v(t)\|_{L^2({\mathbb R}^3)}
\bigr)
\end{equation}
holds for smooth functions $v=v(t,x)$ 
decaying sufficiently fast as $|x|\to\infty$. 
\end{lemma}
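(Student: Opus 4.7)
The plan rests on the structural feature that $M_2(v;c)$ involves only second derivatives $\partial_\delta\partial_j v$ with $j\geq 1$, i.e., at least one spatial derivative. This restriction is essential: it permits the wave equation, rewritten as
\[
\partial_t^2 v = c^2\Delta v + \Box_c v,
\]
to be invoked whenever an unwanted $\partial_t^2 v$ appears, trading it for spatial second derivatives plus a $\Box_c v$ remainder.

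The main step is to establish a pointwise algebraic identity of the form
\[
(c^2 t^2 - |x|^2)\,\partial_\alpha\partial_j v \;=\; \mathcal{R}_{\alpha j}(t,x;\partial Z v,\partial v) \;+\; t^2\,\mathcal{Q}_{\alpha j}(t,x)\,\Box_c v,
\]
valid for $0\leq \alpha\leq 3$ and $1\leq j\leq 3$, in which $\mathcal{R}_{\alpha j}$ is linear in the derivatives $\partial_\beta Z_i v$ and $\partial_\beta v$ with coefficients bounded by $Ct$, and $\mathcal{Q}_{\alpha j}$ is uniformly bounded. The key ingredients are (i) the decomposition of spatial derivatives $\partial_j=(x_j/r)\partial_r-r^{-2}\sum_k x_k\,\Omega_{jk}$; (ii) the identity $r\partial_r=S-t\partial_t$, which replaces the spherical radial derivative by the scaling vector field plus $t\partial_t$; and (iii) the wave equation above, applied every time a $\partial_t^2 v$ emerges, so that the only second-order derivatives surviving on the right-hand side are either of the form $\partial Z v$ (already controlled by $N_2$) or a single copy of $\Box_c v$.

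Once the identity is in hand, the estimate follows by splitting the domain. On the thin slab $\{(t,x)\colon |ct-|x||\leq 1\}$ the weight satisfies $\langle ct-|x|\rangle\leq \sqrt 2$, and since $\partial_\alpha\partial_j v = \partial_\alpha Z_j v$ for $j\in\{1,2,3\}$, the estimate $\|\langle ct-r\rangle\,\partial_\alpha\partial_j v\|_{L^2(\{|ct-r|\leq 1\})}\leq CN_2(v(t))$ is immediate. On the complement $\{|ct-|x||\geq 1\}$, we divide the pointwise identity by $c^2t^2-|x|^2=(ct-|x|)(ct+|x|)$ and multiply by $\langle ct-|x|\rangle$; using $\langle ct-r\rangle\leq\sqrt 2\,|ct-r|$ on this set together with the crude bound $t/(ct+r)\leq 1/c$, one obtains the pointwise inequality
\[
\langle ct-r\rangle\,|\partial_\alpha\partial_j v| \;\leq\; C\sum_{|a|\leq 1}|\partial Z^a v| + C\,t\,|\Box_c v|,
\]
whose $L^2$ norm is bounded by $C\bigl(N_2(v(t))+t\|\Box_c v(t)\|_{L^2}\bigr)$. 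Summing over the indices $0\leq\alpha\leq 3$, $1\leq j\leq 3$ yields the lemma.

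The main obstacle is the systematic bookkeeping required to produce the pointwise identity: every intermediate $\partial_t^2 v$ must be eliminated via the wave equation, and one must verify that the coefficients of $\partial Z v$ grow at most like $O(t)$ while the coefficient of $\Box_c v$ grows at most like $O(t^2)$, so that the subsequent division by $(ct-|x|)(ct+|x|)$ combined with multiplication by $\langle ct-|x|\rangle$ produces exactly the declared $N_2$ and $t\|\Box_c v\|_{L^2}$ bounds. This is precisely the structural rigidity that the Klainerman–Sideris technique captures, and it crucially requires the exclusion of $\partial_t^2 v$ from the left-hand norm $M_2$.
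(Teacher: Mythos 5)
The paper does not actually prove this lemma (it is quoted verbatim from \cite{KS}), so the only issue is whether your argument stands on its own, and it does not: the pointwise inequality you extract on $\{|ct-r|\geq 1\}$, namely
$\langle ct-r\rangle\,|\partial_\alpha\partial_j v|\leq C\sum_{|a|\leq 1}|\partial Z^a v|+Ct|\Box_c v|$,
is false, and so is the underlying identity with $\mathcal{R}_{\alpha j}$ having coefficients $O(t)$ (or even $O(t+r)$). Test it at $x=0$ with $v(t,x)=x_1^2-x_2^2$ (harmonic and static, so $\Box_c v\equiv 0$ near the origin; multiply by a cutoff equal to $1$ for $|x|\leq 1$ to get decay --- this does not change anything at $x=0$). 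At the origin one has $\partial_1^2 v=2$, so the left-hand side is $2\langle ct\rangle$, while every quantity $\partial_\beta Z_i v(t,0)$ and $\partial_\beta v(t,0)$ is either $0$ or $\pm 2$ independently of $t$ (all of $\Omega v$, $Sv$, $\nabla v$ vanish to first order at $x=0$ for this $v$); the claimed identity would force $2c^2t^2=O(t)$. The failure is structural: in the interior region $r\ll t$ the reconstruction of the full Hessian $\partial_i\partial_j$ from $\partial_r^2$ (or $\Delta$) and the rotations, via $\partial_j=\omega_j\partial_r-r^{-2}\sum_k x_k\Omega_{jk}$, produces coefficients $\langle ct-r\rangle/r\sim t/r$ in front of $\nabla\Omega v$ and $\nabla v$, which are unbounded near $r=0$, and at the origin the fields $\Omega$ and $S-t\partial_t$ degenerate, so nothing controls the traceless part of $t\nabla^2 v$ pointwise. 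Your slab decomposition $|ct-r|\lessgtr 1$ never isolates this region.

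The Klainerman--Sideris estimate is genuinely an $L^2$ statement, not a pointwise one. Your identities of the type $(c^2t^2-r^2)\Delta v=t\partial_tSv-r\partial_rSv-Sv-\sum\Omega^2v-t^2\Box_cv$ (and the analogous one for $\partial_t\partial_r v$) are correct and are indeed the starting point, and your treatment of the exterior region $r\gtrsim ct$ is essentially fine. What is missing is the interior: there one must work in $L^2$, converting $\|\langle ct-r\rangle\Delta v\|_{L^2}$ into $\|\langle ct-r\rangle\nabla^2 v\|_{L^2}$ by the integration-by-parts identity for $\int w^2(\Delta v)^2$ versus $\int w^2|\nabla^2v|^2$ with $w=\langle ct-r\rangle$ (the commutator terms are of size $\langle ct-r\rangle'|\nabla v||\nabla^2 v|$ and are absorbed), together with Hardy's inequality for the $r^{-1}$--weighted lower-order terms, and finally an absorption argument to close the resulting system: the right-hand sides of the identities contain terms such as $r\,\partial_r\partial_t v$, which belong to $M_2$ itself and acquire a small constant only after restricting to $r\leq ct/2$, where $\langle ct-r\rangle\sim\langle t\rangle$. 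The correct spatial splitting is therefore interior versus exterior of the cone $r\sim ct$, not a unit neighbourhood of it. See \cite{KS} or \cite[Lemmas 3.1--3.3]{Sideris2000} for the complete argument.
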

%%%%%%%%%%%%%%%%%%%%%%%%%%%%%%%%%
%%%%%%%%%%%%%%%%%%%%%%%%%%%%%%%%%
\section{Bound for $M_\mu(u_1;1)$, $M_\mu(u_2;1)$, 
and $M_\mu(u_3;c_0)$}\label{sectionmmm}
We know that for any data 
$(f_i,g_i)\in C_0^\infty({\mathbb R}^3)\times C_0^\infty({\mathbb R}^3)$ 
$(i=1,2,3)$, 
the Cauchy problem (\ref{eq1})--(\ref{data}) admits 
a unique local (in time) smooth solution which is 
compactly supported in $x$ at any fixed time 
by virtue of finite speed of propagation. 
This section is devoted to the bound for 
$M_\mu(u_1;1)$, $M_\mu(u_2;1)$, and $M_\mu(u_3;c_0)$ 
$(\mu=3,4)$. 
Though much influenced by \cite{SiderisTu}, 
our strategy for establishing their bounds is similar to 
the way adopted in \cite[Section 3]{HZ2019}. 

In the discussion below, 
we use the following quantity for the local solutions 
$u=(u_1,u_2,u_3)$:
%%%%%%%%%%%%%%%%%%%%%%%%%%%%%
\begin{align}\label{<<u>>}
\langle&\!\langle u(t)\rangle\!\rangle\\
&
:=
\langle t\rangle^{-\delta}
\|
r\langle t-r\rangle^{1/2}
\partial u_1(t)
\|_{L^\infty({\mathbb R}^3)}
+
\sum_{|a|\leq 1}
\langle t\rangle^{-2\delta}
\|
r\langle t-r\rangle^{1/2}
\partial Z^a u_1(t)
\|_{L^\infty({\mathbb R}^3)}\nonumber\\
&
+
\|
r\langle t-r\rangle^{1/2}
\partial u_2(t)
\|_{L^\infty({\mathbb R}^3)}
+
\sum_{|a|\leq 1}
\langle t\rangle^{-\delta}
\|
r\langle t-r\rangle^{1/2}
\partial Z^a u_2(t)
\|_{L^\infty({\mathbb R}^3)}\nonumber\\
&
+
\|
r\langle c_0t-r\rangle^{1/2}
\partial u_3(t)
\|_{L^\infty({\mathbb R}^3)}
+
\sum_{|a|\leq 1}
\langle t\rangle^{-\delta}
\|
r\langle c_0t-r\rangle^{1/2}
\partial Z^a u_3(t)
\|_{L^\infty({\mathbb R}^3)}\nonumber\\
&
+
\langle t\rangle^{-\delta}
\sum_{|a|\leq 1}
\biggl(
\|
r^{1/2}
\langle t-r\rangle
\partial Z^a u_1(t)
\|_{L_r^\infty L_\omega^4}
+
\sum_{i=1}^7
\|
r^{1/2}Z_i Z^a u_1(t)
\|_{L_r^\infty L_\omega^4}
\biggr)\nonumber\\
&
%%%%%%%%%%%%%%%%%%%%%%%%%%%%%%
+
\langle t\rangle^{-2\delta}
\sum_{|a|\leq 2}
\biggl(
\|
r^{1/2}
\langle t-r\rangle
\partial Z^a u_1(t)
\|_{L_r^\infty L_\omega^4}
+
\sum_{i=1}^7
\|
r^{1/2}Z_i Z^a u_1(t)
\|_{L_r^\infty L_\omega^4}
\biggr)
\nonumber\\
&
%%%%%%%%%%%%%%%%%%%%%%%%%%%
+
\sum_{|a|\leq 1}
\biggl(
\|
r^{1/2}
\langle t-r\rangle
\partial Z^a u_2(t)
\|_{L_r^\infty L_\omega^4}
+
\sum_{i=1}^7
\|
r^{1/2}Z_i Z^a u_2(t)
\|_{L_r^\infty L_\omega^4}
\biggr)
\nonumber\\
&
%%%%%%%%%%%%%%%%%%%%%%%%%%%%%%%%%%%%
+
\langle t\rangle^{-\delta}
\sum_{|a|\leq 2}
\biggl(
\|
r^{1/2}
\langle t-r\rangle
\partial Z^a u_2(t)
\|_{L_r^\infty L_\omega^4}
+
\sum_{i=1}^7
\|
r^{1/2}Z_i Z^a u_2(t)
\|_{L_r^\infty L_\omega^4}
\biggr)
\nonumber\\
&
%%%%%%%%%%%%%%%%%%%%%%%%%%%%%
+
\sum_{|a|\leq 1}
\biggl(
\|
r^{1/2}
\langle c_0t-r\rangle
\partial Z^a u_3(t)
\|_{L_r^\infty L_\omega^4}
+
\sum_{i=1}^7
\|
r^{1/2}Z_i Z^a u_3(t)
\|_{L_r^\infty L_\omega^4}
\biggr)
\nonumber\\
&
%%%%%%%%%%%%%%%%%%%%%%%%%%%%
+
\langle t\rangle^{-\delta}
\sum_{|a|\leq 2}
\biggl(
\|
r^{1/2}
\langle c_0t-r\rangle
\partial Z^a u_3(t)
\|_{L_r^\infty L_\omega^4}
+
\sum_{i=1}^7
\|
r^{1/2}Z_i Z^a u_3(t)
\|_{L_r^\infty L_\omega^4}
\biggr)
\nonumber\\
&
+
\sum_{|a|\leq 1}
\bigl(
\langle t\rangle^{-\delta}
\|
r\partial Z^a u_1(t)
\|_{L_r^\infty L_\omega^4}
+
\|
r\partial Z^a u_2(t)
\|_{L_r^\infty L_\omega^4}
+
\|
r\partial Z^a u_3(t)
\|_{L_r^\infty L_\omega^4}
\bigr)\nonumber\\
&
+
\langle t\rangle^{-\delta}
\|
\langle t-r\rangle
\partial u_1(t)
\|_{L^\infty({\mathbb R}^3)}
+
\|
\langle t-r\rangle
\partial u_2(t)
\|_{L^\infty({\mathbb R}^3)}
+
\|
\langle c_0t-r\rangle
\partial u_3(t)
\|_{L^\infty({\mathbb R}^3)}\nonumber\\
&
+
\langle t\rangle^{-\delta}
\sum_{|a|\leq 1}
\bigl(
\langle t\rangle^{-\delta}
\|
\langle t-r\rangle
\partial Z^a u_1(t)
\|_{L^\infty({\mathbb R}^3)}
+
\|
\langle t-r\rangle
\partial Z^a u_2(t)
\|_{L^\infty({\mathbb R}^3)}\nonumber\\
&
\hspace{2.3cm}
+
\|
\langle c_0t-r\rangle
\partial Z^a u_3(t)
\|_{L^\infty({\mathbb R}^3)}
\bigr)\nonumber\\
&
+
\sum_{|a|\leq 1}
\bigl(
\langle t\rangle^{-\delta}
\|
\langle t-r\rangle
\partial Z^a u_1(t)
\|_{L^6({\mathbb R}^3)}
+
\|
\langle t-r\rangle
\partial Z^a u_2(t)
\|_{L^6({\mathbb R}^3)}\nonumber\\
&
\hspace{1.3cm}
+
\|
\langle c_0t-r\rangle
\partial Z^a u_3(t)
\|_{L^6({\mathbb R}^3)}
\bigr)
.\nonumber
\end{align}
Using the constant $\delta$ appearing in Theorem \ref{ourmaintheorem}, 
we also set 
\begin{align}
&
{\mathcal M}_\kappa(u(t))
:=
\langle t\rangle^{-\delta}
M_\kappa(u_1(t);1)
+
M_\kappa(u_2(t);1)
+
M_\kappa(u_3(t);c_0),\\
&
{\mathcal N}_\kappa(u(t))
:=
\langle t\rangle^{-\delta}
N_\kappa(u_1(t))
+
N_\kappa(u_2(t))
+
N_\kappa(u_3(t)).
\end{align}
%%%%%%%%%%%%%%%%%%%%%%%%
The purpose of this section is to prove the following:
\begin{proposition}\label{2019june25mnineq} 
Suppose 
\begin{align}
&F_1^{11,\alpha\beta}X_\alpha X_\beta=0,\,
F_2^{11,\alpha\beta}X_\alpha X_\beta
=F_2^{12,\alpha\beta}X_\alpha X_\beta=0,\\
&
\mbox{{\rm and}}\,\,
F_3^{11,\alpha\beta}X_\alpha X_\beta
=F_3^{12,\alpha\beta}X_\alpha X_\beta=0\nonumber
\end{align}
for any $X\in{\mathcal N}^{(1)}$. For $\mu=3,4$, the inequality 
%%%%%%%%%%%%%%%%%%%%%%%%%%%%%%%%%%%%%5
\begin{align}\label{mninequality2019aug16}
{\mathcal M}_\mu(u(t))
\leq&
C_{KS}{\mathcal N}_\mu(u(t))
+
C_{31}
\langle\!\langle u(t)\rangle\!\rangle
{\mathcal N}_\mu(u(t))\\
&
+
C_{32}\langle\!\langle u(t)\rangle\!\rangle^2
{\mathcal N}_3(u(t))
+
C_{33}\langle\!\langle u(t)\rangle\!\rangle
{\mathcal M}_\mu(u(t))\nonumber
\end{align}
holds. 
Here, $C_{KS}$, $C_{31}$, $C_{32}$, and $C_{33}$ 
are positive constants. 
\end{proposition}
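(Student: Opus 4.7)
The strategy is to reduce everything to the Klainerman--Sideris inequality of Lemma \ref{lemmaks}. I would apply \eqref{KSineq} to $Z^a u_i$ (with the appropriate speed $c\in\{1,1,c_0\}$) for $i=1,2,3$ and $|a|\leq\mu-2$. The commutation relations \eqref{eqn:comm1} show that $\Box_c Z^a u_i$ equals a bounded linear combination of $Z^b F_i$ with $|b|\leq|a|$, so after summing, the $N_2$-piece of \eqref{KSineq} produces the $C_{KS}\mathcal{N}_\mu(u)$ contribution of \eqref{mninequality2019aug16} and the proof reduces to estimating
\[
\sum_{i=1}^{3}\sum_{|a|\leq\mu-2}t\,\|Z^a F_i(t)\|_{L^2({\mathbb R}^3)}
\]
by the remaining three terms on the right of \eqref{mninequality2019aug16}.

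For the quadratic part $F_i^{jk,\alpha\beta}(\partial_\alpha u_j)(\partial_\beta u_k)$ I would distinguish the null and non-null cases. When the coefficients satisfy a null condition in $\mathcal{N}^{(c)}$ for the speed of $u_i$---which by the hypothesis of this proposition covers $F_1^{11}$, $F_2^{11}$, $F_2^{12}$, $F_3^{11}$, and $F_3^{12}$, and in particular every appearance of $u_1$ in the $u_2$- and $u_3$-equations aside from the $F^{13}$ pairings---Lemma \ref{nullpreserved} propagates the null structure through every $Z^a$, and Lemmas \ref{estimationlemma}--\ref{lemma22ofzha} then replace one factor $\partial u_\ast$ by $T^{(c)} u_\ast$. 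The extra $\langle t\rangle^{-1}$ supplied by Lemma \ref{lemma22ofzha} cancels the prefactor $t$ from \eqref{KSineq}, leaving a product of a pointwise factor (bounded by $\langle\!\langle u\rangle\!\rangle$ through its $\langle c t-r\rangle|\partial u_\ast|$ and $|Z^b u_\ast|$ pieces) with an $L^2$ factor (bounded by $\mathcal{N}_\mu$ or $\mathcal{M}_\mu$). For the remaining, non-null quadratic terms I would distribute $Z^a$ by Leibniz, place one factor $\partial Z^b u_\ell$ in $L^\infty$ via the weighted bound $r\langle c_\ell t-r\rangle^{1/2}|\partial Z^b u_\ell|$ already encoded in $\langle\!\langle u\rangle\!\rangle$, and keep the other factor in $L^2$. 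The splitting $t\leq r+\langle c t-r\rangle$ then lets the weight $r^{-1}\langle c_\ell t-r\rangle^{-1/2}$ absorb the prefactor $t$, leaving combinations of $\|\langle c t-r\rangle^{-1/2}\partial Z^a u\|_{L^2}$ and $\|r^{-1}\langle c t-r\rangle^{1/2}\partial Z^a u\|_{L^2}$, which are controlled by $\mathcal{N}_\mu$ (Hardy) and by $\mathcal{M}_\mu$ through \eqref{ksm2} after redistributing the outer derivative.

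The cubic and higher-order remainder $C_i(\partial u)$ is handled in the same spirit: two derivative factors are placed in $L^\infty$ through $\langle\!\langle u\rangle\!\rangle$, so that the pair of weights $r^{-1}\langle c t-r\rangle^{-1/2}$ suffices to absorb the prefactor $t$ and leaves radially summable weights, while the last factor is counted in $\mathcal{N}_3(u)$. Because $\mu\leq 4$ and the term has at least three factors, the Leibniz distribution always leaves two factors carrying at most one $Z$, so the pointwise bounds in $\langle\!\langle u\rangle\!\rangle$ are in range. This yields the $\langle\!\langle u\rangle\!\rangle^2\mathcal{N}_3(u)$ contribution of \eqref{mninequality2019aug16}.

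The main obstacle I expect is the careful bookkeeping of the $\langle t\rangle^{\pm\delta}$ weights. Because $u_1$ is permitted to grow like $\langle t\rangle^\delta$ in $N_3$ and $\langle t\rangle^{2\delta}$ in $N_4$, its contributions to $\langle\!\langle u\rangle\!\rangle$, $\mathcal{M}_\mu$, and $\mathcal{N}_\mu$ are each normalized by compensating powers $\langle t\rangle^{-\delta}$ or $\langle t\rangle^{-2\delta}$. One must verify in each quadratic and cubic product that the powers of $\langle t\rangle^{\delta}$ produced by placing $u_1$-factors in $L^\infty$ or $L^2$ are exactly absorbed either by the left-hand side weight $\langle t\rangle^{-\delta}M_\mu(u_1;1)$ inside $\mathcal{M}_\mu$ or by the $\langle t\rangle^{-\delta}$ already built into $\langle\!\langle u\rangle\!\rangle$. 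The null hypotheses on $F_2^{11}$, $F_2^{12}$, $F_3^{11}$, $F_3^{12}$ are precisely what guarantees that every occurrence of $u_1$ in the $u_2$- and $u_3$-equations (outside the harmless $F^{13}$-type pairings) gains a $T^{(1)}$-factor and hence an extra $\langle t\rangle^{-1}$; combined with the weight bookkeeping above, this is what ultimately closes \eqref{mninequality2019aug16} with time-independent constants $C_{31}, C_{32}, C_{33}$.
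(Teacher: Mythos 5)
Your overall architecture is the same as the paper's: apply the Klainerman--Sideris inequality \eqref{KSineq} to $Z^au_i$, commute, and then bound $t\|Z^aF_i\|_{L^2}$ by splitting into null, non-null quadratic, and cubic pieces, using Lemma \ref{nullpreserved}, Lemmas \ref{estimationlemma}--\ref{lemma22ofzha}, a Leibniz $L^\infty$--$L^2$ splitting, and Hardy's inequality. However, there is a genuine gap in your treatment of the null-form terms: you apply Lemma \ref{estimationlemma} followed by Lemma \ref{lemma22ofzha} \emph{globally in space}, whereas the paper first decomposes ${\mathbb R}^3$ into $\{|x|<(c_*/2)t+1\}$ and its complement and invokes the null structure \emph{only on the exterior region}. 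This decomposition is not cosmetic. First, the output of Lemma \ref{lemma22ofzha} contains $\sum_i|Z_iv|$ (including $|Sv|\sim t|\partial_tv|$), and the norm $\langle\!\langle u(t)\rangle\!\rangle$ in \eqref{<<u>>} controls these quantities only through $\|r^{1/2}Z_iZ^au\|_{L_r^\infty L_\omega^4}$, i.e.\ with an $r^{1/2}$ weight that is useless near the origin; your claim that the pointwise factor is ``bounded by $\langle\!\langle u\rangle\!\rangle$ through its $|Z^bu_*|$ pieces'' therefore fails on the interior region. The paper sidesteps this by not using the null structure there at all: on $\{|x|<(c_*/2)t+1\}$ one has $\langle c_kt-r\rangle\gtrsim\langle t\rangle$, and \eqref{2019may261} extracts $\langle t\rangle^{-3/2}$ from the weights $r\langle t-r\rangle^{1/2}$ and $r^{-1}\langle t-r\rangle$ together with Hardy's inequality.

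Second, and more quantitatively, your accounting only cancels the prefactor $t$ against the $\langle t\rangle^{-1}$ from Lemma \ref{lemma22ofzha}, leaving an $O(1)$ bound. That is not enough to close \eqref{mninequality2019aug16} with time-independent constants for exactly the terms you single out as delicate: e.g.\ for $\tilde F_1^{11,\alpha\beta}(\partial_\alpha Z^{a'}u_1)(\partial_\beta Z^{a''}u_1)$ with $\mu=4$, unwinding the normalizations costs $\langle t\rangle^{2\delta}$ from the second-order pieces of $\langle\!\langle u\rangle\!\rangle$ and $\langle t\rangle^{\delta}$ from $N_4(u_1)$ versus ${\mathcal N}_4(u)$, against only the single $\langle t\rangle^{-\delta}$ in front of $M_\mu(u_1;1)$ inside ${\mathcal M}_\mu$; an $O(1)$ estimate thus produces a growing factor $\langle t\rangle^{2\delta}{\mathcal N}_4(u)$ rather than $C_{31}{\mathcal N}_4(u)$. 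The paper's estimates \eqref{2019may261} and \eqref{2019may263} each deliver an \emph{extra} $\langle t\rangle^{-1/2}$ beyond the bare $\langle t\rangle^{-1}$ --- on the interior from $\langle t-r\rangle\gtrsim\langle t\rangle$, on the exterior from $r\gtrsim\langle t\rangle$ in the $L_r^\infty L_\omega^4\times L_r^2L_\omega^4$ pairing --- and it is precisely this surplus $\langle t\rangle^{-1/2+C\delta}\le 1$ that absorbs the $\delta$-mismatches you correctly identify as the main obstacle. You need to add the region decomposition and track where this extra half power of decay comes from; otherwise the argument does not close.
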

%%%%%%%%%%%%%%
The proof of this proposition is carried out 
in the following three subsections.
%%%%%%%%%%%%%%%%%%%%%%%%%%%%%%
\subsection{Bound for $M_\mu(u_1;1)$}
We have for $|a|\leq\mu-2$, $\mu=3,4$
%%%%%%%%%%%%%%%%%%%%%%%%%%%%%%%%%%%%%%%%%%%%%%%%%%
\begin{align}\label{equality31}
\Box_1 Z^au_1
=&
\sum\!{}^{'}{\tilde F}_1^{11,\alpha\beta}
(\partial_\alpha Z^{a'}u_1)
(\partial_\beta Z^{a''}u_1)
+
\sum\!{}^{''}{\tilde F}_1^{jk,\alpha\beta}
(\partial_\alpha Z^{a'}u_j)
(\partial_\beta Z^{a''}u_k)\\
&
+
Z^a C_1(\partial u_1,\partial u_2,\partial u_3),\nonumber
\end{align}
%%%%%%%%%%%%%%%%%%%%%%%%%%%%%%%%
where the new coefficients 
${\tilde F}_1^{11,\alpha\beta}$ and 
${\tilde F}_1^{jk,\alpha\beta}$ 
(${\tilde F}_1^{jk,\alpha\beta}=0$ if $j>k$) 
actually depend also on $a'$ and $a''$. 
By $\sum\!{}^{'}$, 
we mean the summation over all 
$a'$ and $a''$ such that 
$|a'|+|a''|\leq |a|$. 
By $\sum\!{}^{''}$, 
we mean the summation over all such $a'$, $a''$ 
and all $j$ and $k$ such that $(j,k)\ne (1,1)$; 
for the second term on the right-hand side above, 
the summation convention 
only over the repeated Greek letters $\alpha$ and $\beta$ 
has been used. 
By Lemma \ref{nullpreserved}, we know
\begin{equation}\label{newnull31}
{\tilde F}_1^{11,\alpha\beta}X_\alpha X_\beta
=0,
\,\,
X\in{\mathcal N}^{(1)}.
\end{equation}
%%%%%%%%%%%%%%%%%%%%%%%%%%%
We apply Lemma \ref{lemmaks} to 
$v=Z^a u_1$, $|a|\leq\kappa-2$, $\kappa=3,4$. 
Taking (\ref{KSineq}) into account, 
we need to bound
\begin{align}\label{tj1tj2}
t&\sum\!{}^{'}
\|
{\tilde F}_1^{11,\alpha\beta}
(\partial_\alpha Z^{a'}u_1)
(\partial_\beta Z^{a''}u_1)
\|_{L^2({\mathbb R}^3)}\\
&+
t\sum\!{}^{''}
\|
(\partial Z^{a'}u_j)
(\partial Z^{a''}u_k)
\|_{L^2({\mathbb R}^3)}\nonumber
\end{align}
and 
%%%%%%%%%%%%%%%%%%%%%%%%%%%%%
\begin{equation}\label{2019june25cubic1}
t\sum_{i,j,k}\sum\!{}^{'}
\|\partial u_i(t)\|_{L^\infty({\mathbb R}^3)}
\|
(\partial Z^{a'}u_j)
(\partial Z^{a''}u_k)
\|_{L^2({\mathbb R}^3)}.
\end{equation}
%%%%%%%%%%%%%%%%%%%%%%%%%%%%
%%%%%%%%%%%%%%%%%%%%%%%%%%
In the following discussion, 
we utilize the characteristic function $\chi_1$ 
of the set 
$\{x\in{\mathbb R}^3:|x|<(c_*/2)t+1\}$, 
where $c_*:=\min\{c_0,1\}$. 
We set $\chi_2:=1-\chi_1$. 
Just for simplicity, 
we omit dependence of $\chi_1$, $\chi_2$ on $t$. 
Owing to (\ref{<<u>>}), we get
%%%%%%%%%%%%%%%%%%%%%%%%%%%%%%%%%%%
\begin{align}\label{2019may261}
\|&
\chi_1
{\tilde F}^{11,\alpha\beta}_1
(\partial_\alpha Z^{a'}u_1)
(\partial_\beta Z^{a''}u_1)
\|_{L^2({\mathbb R}^3)}\\
&
\leq
C
\|
\chi_1
(\partial Z^{a'}u_1)
(\partial Z^{a''}u_1)
\|_{L^2({\mathbb R}^3)}\nonumber\\
&
\leq
C
\langle t\rangle^{-3/2}
\|
r\langle t-r\rangle^{1/2}
\partial Z^{a'}u_1
\|_{L^\infty({\mathbb R}^3)}
\|
r^{-1}\langle t-r\rangle\partial Z^{a''}u_1
\|_{L^2({\mathbb R}^3)}\nonumber\\
&
\leq
C\langle t\rangle^{-(3/2)+2\delta}
\langle\!\langle u(t)\rangle\!\rangle
(
N_{\mu-1}(u_1(t))
+
M_\mu(u_1(t);1)
).\nonumber
\end{align}
%%%%%%%%%%%%%%%%%%%%%%%%%%%%
Here we have used the Hardy inequality, as in \cite[(6.27)]{H2004}. 
Also, we have assumed $|a'|\leq |a''|$ because the other case can be 
handled similarly. Since $|a'|\leq |a''|\leq |a|\leq\mu-2$ $(\mu=3,4)$, 
we have used the fact $|a'|\leq 1$. 

Since the property (\ref{newnull31}) has played no role above, 
we also obtain by assuming $|a'|\leq |a''|$ 
without loss of generality
%%%%%%%%%%%%%%%%%%%%%%%%%%%%%
\begin{align}\label{2019may262}
\|&
\chi_1
(\partial Z^{a'}u_j)
(\partial Z^{a''}u_k)
\|_{L^2({\mathbb R}^3)}\\
&
\leq
C
\langle t\rangle^{-3/2}
\|
r\langle c_jt-r\rangle^{1/2}
\partial_\alpha Z^{a'}u_j
\|_{L^\infty({\mathbb R}^3)}
\|
r^{-1}\langle c_kt-r\rangle\partial_\beta Z^{a''}u_k
\|_{L^2({\mathbb R}^3)}\nonumber\\
&
\leq
C\langle t\rangle^{-(3/2)+2\delta}
\langle\!\langle u(t)\rangle\!\rangle
\sum_{k=1}^3
(
N_{\mu-1}(u_k(t))
+
M_\mu(u_k(t);c_k)
).\nonumber
\end{align}
%%%%%%%%%%%%%%%%%%%%%%%%%%%%
Here, and in the following as well, 
by $c_k$ 
we mean $c_1=c_2=1$, $c_3=c_0$ 
(see (\ref{eq1})). 

Let us turn our attention to 
$|x|>(c_*/2)t+1$. 
Using Lemmas \ref{estimationlemma}--\ref{lemma22ofzha} 
together with (\ref{newnull31}), 
we obtain 
%%%%%%%%%%%%%%%%%%%%%%%%%%%%%%%%%
\begin{align}\label{2019may263}
\sum&\!{}^{'}
\|
\chi_2
{\tilde F}^{11,\alpha\beta}_1
(\partial_\alpha Z^{a'}u_1)
(\partial_\beta Z^{a''}u_1)
\|_{L^2({\mathbb R}^3)}\\
&
\leq
C\sum_{|a'|+|a''|\leq \mu-2}
\bigl(
\|
\chi_2
|T^{(1)}Z^{a'}u_1|
|\partial Z^{a''}u_1|
\|_{L^2({\mathbb R}^3)}\nonumber\\
&
\hspace{2cm}
+
\|
\chi_2
|\partial Z^{a'}u_1|
|T^{(1)} Z^{a''}u_1|
\|_{L^2({\mathbb R}^3)}
\bigr)\nonumber\\
&
\leq
C\sum_{|a'|+|a''|\leq \mu-2}
\langle t\rangle^{-3/2}
\biggl(
\|
r^{1/2}\partial_t Z^{a'} u_1
\|_{L_r^\infty L_\omega^4}
+
\sum_{i=1}^7
\|
r^{1/2}Z_i Z^{a'} u_1
\|_{L_r^\infty L_\omega^4}\nonumber\\
&
\hspace{2cm}
+
\|
r^{1/2}\langle t-r\rangle\partial_x Z^{a'} u_1
\|_{L_r^\infty L_\omega^4}
\biggr)
\|
\partial Z^{a''} u_1
\|_{L_r^2 L_\omega^4}\nonumber\\
&
\leq
C\langle t\rangle^{-(3/2)+2\delta}
\langle\!\langle u(t)\rangle\!\rangle
N_\mu(u_1(t)).\nonumber 
\end{align}
%%%%%%%%%%%%%%%%%%%%%%%%%%
When dealing with 
$\|\chi_2(\partial Z^{a'}u_j)(\partial Z^{a''}u_k)\|_{L^2}$ 
$(1\leq j\leq k\leq 3,\,(j,k)\ne (1,1))$, 
we obviously know $k=2$ or $k=3$. 
When $|a'|\leq 2$ and $|a''|=0$, 
we get
\begin{align}\label{2019may264}
\|&
\chi_2(\partial Z^{a'}u_j)(\partial u_k)
\|_{L^2({\mathbb R}^3)}\\
&
\leq
C
\langle t\rangle^{-1}
\|\partial Z^{a'}u_j\|_{L^2({\mathbb R}^3)}
\|
r\partial u_k
\|_{L^\infty({\mathbb R}^3)}
\leq
C
\langle t\rangle^{-1+\delta}
\langle\!\langle u(t)\rangle\!\rangle
{\mathcal N}_3(u(t)).\nonumber
\end{align}
%%%%%%%%%%%%%%%%%%%%%%%%%%%
When $|a'|\leq 1$ and $|a''|\leq 1$, 
we get
\begin{align}\label{2019may265}
\|&
\chi_2(\partial Z^{a'}u_j)(\partial Z^{a''}u_k)
\|_{L^2({\mathbb R}^3)}\\
&
\leq
C
\langle t\rangle^{-1}
\|r\partial Z^{a'}u_j\|_{L_r^\infty L_\omega^4}
\|
\partial Z^{a''}u_k
\|_{L_r^2 L_\omega^4}
\nonumber\\
&
\leq
C
\langle t\rangle^{-1+\delta}
\langle\!\langle u(t)\rangle\!\rangle
\bigl(
N_3(u_2(t))
+
N_3(u_3(t))
\bigr).\nonumber
\end{align}
%%%%%%%%%%%%%%%%%%%%%%%%%%%%%%%%%%
When $|a'|=0$ and $|a''|\leq 2$, we get
%%%%%%%%%%%%%%%%%%%%%%%%%%%
\begin{align}\label{2019may266}
\|&
\chi_2(\partial u_j)(\partial Z^{a''}u_k)
\|_{L^2({\mathbb R}^3)}\\
&
\leq
C
\langle t\rangle^{-1}
\|
r\partial u_j
\|_{L^\infty({\mathbb R}^3)}
\|
\partial Z^{a''}u_k
\|_{L^2({\mathbb R}^3)}\nonumber\\
&
\leq
C
\langle t\rangle^{-1+\delta}
\langle\!\langle u(t)\rangle\!\rangle
\bigl(
N_3(u_2(t))
+
N_3(u_3(t))
\bigr).\nonumber
\end{align}
%%%%%%%%%%%%%%%%%%%%%%%%%%%%%%%%
As for (\ref{2019june25cubic1}), 
it easy to get for $|a|\leq \mu-2$, $\mu=3,4$
\begin{equation}\label{2019june25ineqcubic1}
t\sum_{i,j,k}\sum\!{}^{'}
\|\partial u_i(t)\|_{L^\infty({\mathbb R}^3)}
\|
(\partial Z^{a'}u_j)
(\partial Z^{a''}u_k)
\|_{L^2({\mathbb R}^3)}
\leq
C
\langle\!\langle u(t)\rangle\!\rangle^2
{\mathcal N}_{\mu-1}(u(t)).
\end{equation}
%%%%%%%%%%%%%%%%%%%%%%%%%%%%%%%

Summing up, 
we have obtained for $\mu=3,4$
\begin{align}\label{2019june25boundm1}
\langle&t\rangle^{-\delta}
M_\mu(u_1(t);1)\\
&
\leq
C\langle t\rangle^{-\delta}
N_\mu(u_1(t))\nonumber\\
&
+
C\langle t\rangle^{-(1/2)+\delta}
\langle\!\langle u(t)\rangle\!\rangle
\sum_{k=1}^3
\bigl(
N_{\mu-1}(u_k(t))
+
M_\mu(u_k(t);c_k)
\bigr)\nonumber\\
&
+
C\langle t\rangle^{-(1/2)+\delta}
\langle\!\langle u(t)\rangle\!\rangle
N_\mu(u_1(t))
+
C
\bigl(
\langle\!\langle u(t)\rangle\!\rangle
+
\langle\!\langle u(t)\rangle\!\rangle^2
\bigr)
{\mathcal N}_3(u(t)).\nonumber
\end{align}
%%%%%%%%%%%%%%%%%%%%%%%%%%%%%%%%
%%%%%%%%%%%%%%%%%%%%%%%%%%%%
\subsection{Bound for $M_\mu(u_2;1)$} 
As in (\ref{equality31}), we have
%%%%%%%%%%%%%%%%%%%%%%%%%%%%%%%%
\begin{align}\label{equality32}
\Box_1 Z^a u_2
=&
\sum_{{1\leq j\leq k\leq 3}\atop{(j,k)\ne (1,3)}}
\sum\!{}^{'}{\tilde F}_2^{jk,\alpha\beta}
(\partial_\alpha Z^{a'}u_j)
(\partial_\beta Z^{a''}u_k)\\
%+
%\sum\!{}^{'}{\tilde F}_2^{12,\alpha\beta}
%(\partial_\alpha Z^{a'}u_1)
%(\partial_\beta Z^{a''}u_2)\\
%&
%+
%\sum\!{}^{'}{\tilde F}_2^{22,\alpha\beta}
%(\partial_\alpha Z^{a'}u_2)
%(\partial_\beta Z^{a''}u_2)
%+
%\sum\!{}^{'}{\tilde F}_2^{23,\alpha\beta}
%(\partial_\alpha Z^{a'}u_2)
%(\partial_\beta Z^{a''}u_3)\nonumber\\
&
+Z^a C_2(\partial u_1,\partial u_2,\partial u_3),\nonumber
\end{align}
%%%%%%%%%%%%%%%%%%%%%%%%%%%%%%%%5
where the new coefficients ${\tilde F}_2^{jk,\alpha\beta}$ 
%${\tilde F}_2^{12,\alpha\beta}$, 
%${\tilde F}_2^{22,\alpha\beta}$, and 
%${\tilde F}_2^{23,\alpha\beta}$ 
actually depend also on $a'$, $a''$. 
By Lemma \ref{nullpreserved}, we know
\begin{equation}\label{newnull32}
{\tilde F}_2^{11,\alpha\beta}X_\alpha X_\beta
=
{\tilde F}_2^{12,\alpha\beta}X_\alpha X_\beta
=
{\tilde F}_2^{22,\alpha\beta}X_\alpha X_\beta
=0,
\,\,
X\in{\mathcal N}^{(1)}.
\end{equation}
%%%%%%%%%%%%%%%%%%%%%%%%%%
(In fact, the condition on 
${\tilde F}_2^{22,\alpha\beta}$ plays no role in the present section.) 
The same computation as in (\ref{2019may261})--(\ref{2019may262}) yields 
\begin{align}\label{june19ineq1}
\|&
\chi_1
{\tilde F}_2^{11,\alpha\beta}
(\partial_\alpha Z^{a'} u_1)
(\partial_\beta Z^{a''}u_1)
\|_{L^2({\mathbb R}^3)}\\
&
\leq
C
\langle t\rangle^{-(3/2)+2\delta}
\langle\!\langle u(t)\rangle\!\rangle
\bigl(
N_{\mu-1}(u_1(t))
+
M_\mu(u_1(t);1)
\bigr),\nonumber
\end{align}
%%%%%%%%%%%%%%%%%%%%%%%%%
\begin{align}\label{june19ineq2}
\|&
\chi_1
{\tilde F}_2^{12,\alpha\beta}
(\partial_\alpha Z^{a'} u_1)
(\partial_\beta Z^{a''}u_2)
\|_{L^2({\mathbb R}^3)}\\
&
\leq
C
\langle t\rangle^{-(3/2)+2\delta}
\langle\!\langle u(t)\rangle\!\rangle
\bigl(
N_{\mu-1}(u_2(t))
+
M_\mu(u_2(t);1)
\bigr)\nonumber\\
&
+
C
\langle t\rangle^{-(3/2)+\delta}
\langle\!\langle u(t)\rangle\!\rangle
\bigl(
N_{\mu-1}(u_1(t))
+
M_\mu(u_1(t);1)
\bigr).\nonumber
%\\
%%%%%%%%%%%%%%%%%%%%%%%%%
%\|&
%\chi_1
%{\tilde F}_2^{22,\alpha\beta}
%(\partial_\alpha Z^{a'} u_2)
%(\partial_\beta Z^{a''}u_2)
%\|_{L^2({\mathbb R}^3)}\\
%&
%\leq
%C
%\langle t\rangle^{-(3/2)+\delta}
%\langle\!\langle u(t)\rangle\!\rangle
%\bigl(
%N_{\mu-1}(u_2(t))
%+
%M_\mu(u_2(t);1)
%\bigr).\nonumber
\end{align}
%%%%%%%%%%%%%%%%%%%%%%%%%%%%%%%
On the other hand, 
using the property (\ref{newnull32}) 
of the coefficients ${\tilde F}_2^{11,\alpha\beta}$ 
and ${\tilde F}_2^{12,\alpha\beta}$, 
we get
\begin{equation}\label{june19ineq3}
\|
\chi_2
{\tilde F}_2^{11,\alpha\beta}
(\partial_\alpha Z^{a'} u_1)
(\partial_\beta Z^{a''}u_1)
\|_{L^2({\mathbb R}^3)}
\leq
C
\langle t\rangle^{-(3/2)+2\delta}
\langle\!\langle u(t)\rangle\!\rangle
N_\mu(u_1(t))
\end{equation}
%%%%%%%%%%%%%%%%%%%%%%%%%%
and 
\begin{align}\label{june19ineq4}
\|&
\chi_2
{\tilde F}_2^{12,\alpha\beta}
(\partial_\alpha Z^{a'} u_1)
(\partial_\beta Z^{a''}u_2)
\|_{L^2({\mathbb R}^3)}\\
&
\leq
C
\langle t\rangle^{-(3/2)+2\delta}
\langle\!\langle u(t)\rangle\!\rangle
N_\mu(u_2(t))%\nonumber\\
%&
+
C
\langle t\rangle^{-(3/2)+\delta}
\langle\!\langle u(t)\rangle\!\rangle
N_\mu(u_1(t))\nonumber
\end{align}
%%%%%%%%%%%%%%%%%%%%%%%%%%%%%%%%5
as in (\ref{2019may263}). 
%%%%%%%%%%%%%%%%%%%%%%%%%%%%%%%%
%Moreover, repeating the same discussion as in 
%(\ref{2019may264})--(\ref{2019may266}), we get
%%%%%%%%%%%%%%%%%%%%%%%%%%%%%5
%\begin{equation}
%\|
%\chi_2
%{\tilde F}_2^{22,\alpha\beta}
%(\partial_\alpha Z^{a'} u_2)
%(\partial_\beta Z^{a''}u_2)
%\|_{L^2({\mathbb R}^3)}
%\leq
%C
%\langle t\rangle^{-1}
%\langle\!\langle u(t)\rangle\!\rangle
%N_3(u_2(t)).
%\end{equation}
%%%%%%%%%%%%%%%%%%%%%%%%%%%%%%%%%%%%%%
%%%%%%%%%%%%%%%%%%%%%%%%%%%%%%%%%%%%%
Therefore, we focus on the terms 
with $(j,k)=(2,2)$, $(2,3)$, and 
$(3,3)$ on the right-hand side of 
(\ref{equality32}). 
We have only to show 
how to estimate the term with $(j,k)=(2,3)$ 
because the others can be handled similarly. 
When $|a'|=0$ and $|a''|\leq 2$, 
we get 
\begin{align}\label{june20930}
\|
\chi_1
(\partial u_2)
(\partial Z^{a''}u_3)
\|_{L^2({\mathbb R}^3)}
&\leq
C
\langle t\rangle^{-1}
\|
\langle t-r\rangle
\partial u_2
\|_{L^\infty({\mathbb R}^3)}
\|
\partial Z^{a''}u_3
\|_{L^2({\mathbb R}^3)}\\
&
\leq
C
\langle t\rangle^{-1}
\langle\!\langle u(t)\rangle\!\rangle
N_3(u_3(t)).\nonumber
\end{align}
%%%%%%%%%%%%%%%%%%%%%%%%%%%%%%
When $|a'|\leq 1$ and $|a''|\leq 1$, 
we get
\begin{align}
\|
\chi_1
(\partial Z^{a'}u_2)
(\partial Z^{a''}u_3)
\|_{L^2({\mathbb R}^3)}
&\leq
C
\langle t\rangle^{-1}
\|
\langle t-r\rangle
\partial Z^{a'}u_2
\|_{L^6({\mathbb R}^3)}
\|
\partial Z^{a''}u_3
\|_{L^3({\mathbb R}^3)}\\
&
\leq
C
\langle t\rangle^{-1}
\langle\!\langle u(t)\rangle\!\rangle
N_3(u_3(t)).\nonumber
\end{align}
%%%%%%%%%%%%%%%%%%
Furthermore, we obtain 
for $|a'|\leq 2$ and $|a''|=0$
\begin{align}
\|
\chi_1
(\partial Z^{a'}u_2)
(\partial u_3)
\|_{L^2({\mathbb R}^3)}
&\leq
C
\langle t\rangle^{-1}
\|
\partial Z^{a'}u_2
\|_{L^2({\mathbb R}^3)}
\|
\langle c_0t-r\rangle
\partial u_3
\|_{L^\infty({\mathbb R}^3)}\\
&
\leq
C
\langle t\rangle^{-1}
\langle\!\langle u(t)\rangle\!\rangle
N_3(u_2(t)).\nonumber
\end{align}
%%%%%%%%%%%%%%%%%%%%%%%%%%%%%%%%%%%
On the other hand, 
repeating the same discussion as in 
(\ref{2019may264})--(\ref{2019may266}), 
we can obtain 
\begin{equation}\label{june20ineq932}
\|
\chi_2
(\partial Z^{a'}u_2)
(\partial Z^{a''}u_3)
\|_{L^2({\mathbb R}^3)}
\leq
C
\langle t\rangle^{-1}
\langle\!\langle u(t)\rangle\!\rangle
\bigl(
N_3(u_2(t))+N_3(u_3(t))
\bigr)
\end{equation}
for $|a'|+|a''|\leq 2$. 
%%%%%%%%%%%%%%%%%%%%%%%%%%%

The cubic term $Z^a C_2(\partial u_1,\partial u_2,\partial u_3)$ 
can be handled in the same way as in (\ref{2019june25ineqcubic1}). 
Summing up, we have obtained for $\mu=3,4$
\begin{align}\label{2019june25boundm2}
M&_\mu(u_2(t);1)\\
&
\leq
CN_\mu(u_2(t))
+
C\langle t\rangle^{-(1/2)+2\delta}
\langle\!\langle u(t)\rangle\!\rangle
\sum_{k=1}^2
\bigl(
N_\mu(u_k(t))
+
M_\mu(u_k(t);1)
\bigr)\nonumber\\
&
+
C
\bigl(
\langle\!\langle u(t)\rangle\!\rangle
+
\langle\!\langle u(t)\rangle\!\rangle^2
\bigr)
{\mathcal N}_3(u(t)).\nonumber
\end{align}
%%%%%%%%%%%%%%%%%%%%%%%%%%%%%%%%%%%%%%%
\subsection{Bound for $M_\mu(u_3;c_0)$}
As in (\ref{equality31}), 
we have
\begin{align}\label{equality619}
\Box_{c_0} Z^a u_3
=&
\sum_{{1\leq j\leq k\leq 3}\atop{(j,k)\ne (1,3)}}
\sum\!{}^{'}{\tilde F}_3^{jk,\alpha\beta}
(\partial_\alpha Z^{a'}u_j)
(\partial_\beta Z^{a''}u_k)\\
&
+
Z^a C_3(\partial u_1,\partial u_2,\partial u_3),\nonumber
\end{align}
%%%%%%%%%%%%%%%%%%%%%%%%%%%%%%%
where the new coefficients above 
actually depend on $a'$, $a''$. 
By Lemma \ref{nullpreserved}, 
we have
\begin{align}
&{\tilde F}_3^{11,\alpha\beta}X_\alpha X_\beta
=
{\tilde F}_3^{12,\alpha\beta}X_\alpha X_\beta=0,
\,\,X\in{\mathcal N}^{(1)},\\
&
{\tilde F}_3^{33,\alpha\beta}X_\alpha X_\beta=0,\,\,
X\in{\mathcal N}^{(c_0)}.
\end{align}
%%%%%%%%%%%%%%%%%%%%%%%%%%%%%%%%%
(In fact, this condition on ${\tilde F}_3^{33,\alpha\beta}$ plays no role 
in the present section.) 
The terms with $(j,k)=(1,1)$ and $(1,2)$ on the right-hand side of 
(\ref{equality619}) 
can be handled in the same way as in 
(\ref{june19ineq1}), (\ref{june19ineq3}) and 
(\ref{june19ineq2}), (\ref{june19ineq4}), 
respectively. 
Moreover, we can bound the terms 
with $(j,k)=(2,2)$, $(2,3)$, and $(3,3)$ on the right-hand side of 
(\ref{equality619}) similarly to 
(\ref{june20930})--(\ref{june20ineq932}). 
The cubic term can be handled in the same way as before. 
We have therefore obtained for $\mu=3,4$
\begin{align}\label{2019june25boundm3}
M&_\mu(u_3(t);c_0)\\
&
\leq
CN_\mu(u_3(t))
+
C\langle t\rangle^{-(1/2)+2\delta}
\langle\!\langle u(t)\rangle\!\rangle
\sum_{k=1}^2
\bigl(
N_\mu(u_k(t))
+
M_\mu(u_k(t);1)
\bigr)\nonumber\\
&
+
C
\bigl(
\langle\!\langle u(t)\rangle\!\rangle
+
\langle\!\langle u(t)\rangle\!\rangle^2
\bigr)
{\mathcal N}_3(u(t)).\nonumber
\end{align}
%%%%%%%%%%%%%%%%%%%%%%%%%%%
It is obvious that Proposition \ref{2019june25mnineq} 
is a direct consequence of 
(\ref{2019june25boundm1}), 
(\ref{2019june25boundm2}), and 
(\ref{2019june25boundm3}). 
We have finished the proof. $\hfill\Box$
%%%%%%%%%%%%%%%%%%%%%%%%%%%%%%%%%%%%%
%%%%%%%%%%%%%%%%%%%%%%%%%%%%%%%%%%%%%%%
\section{Energy estimate}\label{sectionenergy}
We carry out the energy estimate by relying upon the ghost weight method of Alinhac 
\cite{Al2001}, \cite{Al2010}. 
Just in order to make the proof self-contained, 
let us start our discussion with some preliminaries. 
Let $c>0$, and define 
$m^{\alpha\beta}:={\rm diag}(-1,c^2,c^2,c^2)$. 
We define the energy-momentum tensor as 
\begin{equation}
T^{\alpha\beta}
:=
m^{\alpha\mu}m^{\beta\nu}
(\partial_\mu v)
(\partial_\nu v)
-
\frac12
m^{\alpha\beta}
m^{\mu\nu}
(\partial_\mu v)
(\partial_\nu v).
\end{equation}
%%%%%%%%%%%%%%%%%%%%%%%
A straightforward computation yields
%%%%%%%%%%%%%%%%%%%%%%%%%
\begin{equation}
\partial_\beta
T^{\alpha\beta}
=
(m^{\alpha\mu}\partial_\mu v)
(-\Box_c v).
\end{equation}
%%%%%%%%%%%%%%%%%%%%%%%
In particular, we have 
\begin{equation}
\partial_\beta T^{0\beta}
=
(\partial_t v)
(\Box_c v).
\end{equation}
%%%%%%%%%%%%%%%%%%%%
For any $g=g(\rho)\in C^1({\mathbb R})$, 
we therefore get
%%%%%%%%%%%%%%%%%%%
\begin{align}
\partial_\beta
(e^{g(ct-r)}T^{0\beta})
&=
e^{g(ct-r)}g'(ct-r)
(-\omega_\beta)
T^{0\beta}
+
e^{g(ct-r)}\partial_\beta T^{0\beta}\\
&
=
e^{g(ct-r)}
\biggl\{
\frac{c}{2}
g'(ct-r)
\sum_{j=1}^3
(T_j^{(c)}v)^2
+
(\partial_t v)(\Box_c v)
\biggr\}.\nonumber
\end{align}
%%%%%%%%%%%%%%%%%%%%%%%%%%
Here, by $\omega=(\omega_0,\omega_1,\omega_2,\omega_3)$, 
we mean 
$\omega_0=-c$, 
$\omega_j=x_j/|x|$. 
As for $T_j^{(c)}$, see (\ref{2019Nov16OK}). 
With $0<\eta<1/4$, 
we choose 
\begin{equation}
g(\rho)
=
-\int_0^\rho
\langle {\tilde \rho}\rangle^{-1-2\eta}
d{\tilde\rho},\,\,
\rho\in{\mathbb R},
\end{equation}
so that $g'(ct-r)=-\langle ct-r\rangle^{-1-2\eta}$. 
Since $g(\rho)$ is a bounded function and 
we have 
$T^{00}
=
\bigl\{
(\partial_t v)^2
+
c^2|\nabla v|^2
\bigr\}/2$, 
we get the key estimate
%%%%%%%%%%%%%%%%%%%%%%%%%%%%%%%%5
\begin{align}\label{keyghostweight}
E&(v(t);c)
+
\sum_{j=1}^3
\int_0^t\!\!
\int_{{\mathbb R}^3}
\langle c\tau-r\rangle^{-1-2\eta}
\bigl(
T_j^{(c)}v(\tau,x)
\bigr)^2
d\tau
dx\\
&
\leq
CE(v(0);c)
+
C
\int_0^t\!\!
\int_{{\mathbb R}^3}
|\Box_c v(\tau,x)|
|\partial_t v(\tau,x)|
d\tau
dx\nonumber
\end{align}
for any smooth function $v(t,x)$ 
decaying sufficiently fast as $|x|\to\infty$. 
In the following, we use the notation for $c>0$
\begin{equation}
G(v(t);c)
:=
\biggl(
\sum_{|a|\leq 3}
\sum_{j=1}^3
\int_{{\mathbb R}^3}
\langle ct-r\rangle^{-1-2\eta}
\bigl(
T_j^{(c)} Z^a v(t,x)
\bigr)^2
dx
\biggr)^{1/2}
\end{equation}
%%%%%%%%%%%%%%%%%%%%%%
associated with (\ref{keyghostweight}) and 
%%%%%%%%%%%%%%%%%%%%%%%%%%%%%
\begin{equation}\label{weightlocalenergynorm}
L(v(t))
:=
\biggl(
\sum_{|a|\leq 3}
\bigl(
\|
r^{-5/4}Z^a v(t)
\|_{L^2({\mathbb R}^3)}^2
+
\|
r^{-1/4}
\partial Z^a v(t)
\|_{L^2({\mathbb R}^3)}^2
\bigr)
\biggl)^{1/2}
\end{equation}
%%%%%%%%%%%%%%%%%%%%%%%%%%%%%
associated with (\ref{l2spacetime}). 
Recall that we use the notation 
$c_1=c_2=1$, $c_3=c_0$ (see (\ref{eq1})). 
The purpose of this section is to prove 
the following a priori estimate. 
%%%%%%%%%%%%%%%%%%%%%%%%%%%%%%
\begin{proposition}\label{2019Nov17OK}
Suppose $c_0\ne 1$ in \mbox{$(\ref{eq1})$} 
and suppose \mbox{$(\ref{assumption1})$--$(\ref{assumption5})$}. 
The unique local $($in time$)$ solution to 
{\rm (\ref{eq1})--(\ref{data})} 
defined in $(0,T)\times{\mathbb R}^3$ for some $T>0$ 
satisfies 
\begin{align}\label{2019Nov17LowEnergy}
\bigl(&
\langle t\rangle^{-\delta}
N_3(u_1(t))
\bigr)^2
+
N_3(u_2(t))^2
+
N_3(u_3(t))^2\\
&
\leq
C\sum_{k=1}^3 N_3(u_k(0))^2\nonumber\\
&
\hspace{0.1cm}
+
C
\langle\!\langle u\rangle\!\rangle_T
\biggl(
\sup_{0<t<T}
\langle t\rangle^{-\delta}
{\mathcal N}_4(u(t))
+
\sup_{0<t<T}
\langle t\rangle^{-\delta}
{\mathcal M}_4(u(t))
\biggr)
\sup_{0<t<T}
{\mathcal N}_3(u(t))\nonumber\\
&
\hspace{0.1cm}
+
C\langle\!\langle u\rangle\!\rangle_T^2
\biggl(
\sup_{0<t<T}
{\mathcal N}_3(u(t))
\biggr)^2\nonumber
\end{align}
%%%%%%%%%%%%%%%%%%%%%%%%%%
and 
%%%%%%%%%%%%%%%%%%%%%%%%
\begin{align}\label{2019Nov17HighEnergy}
\bigl(&
\langle t\rangle^{-2\delta}
N_4(u_1(t))
\bigr)^2
+
\sum_{k=2}^3
\bigl(
\langle t\rangle^{-\delta}
N_4(u_k(t))
\bigr)^2\\
&
\hspace{0.1cm}
+
\langle t\rangle^{-4\delta}
\int_0^t
G(u_1(\tau);1)^2d\tau
+
\sum_{k=2}^3
\langle t\rangle^{-2\delta}
\int_0^t
G(u_k(\tau);c_k)^2d\tau
\nonumber\\
&
\leq
C
\sum_{k=1}^3
N_4(u_k(0))^2\nonumber\\
&
\hspace{0.1cm}
+
C
\langle\!\langle u\rangle\!\rangle_T
\int_0^T
\langle\tau\rangle^{-1+2\delta}
\biggl(
\sum_{k=1}^3
L(u_k(\tau))
\biggr)^2d\tau\nonumber\\
&
\hspace{0.1cm}
+
C
\langle\!\langle u\rangle\!\rangle_T
\biggl(
\sup_{0<t<T}
\langle t\rangle^{-\delta}
{\mathcal N}_4(u(t))
\biggr)
\int_0^T
\langle\tau\rangle^{-1+\eta+4\delta}
\sum_{k=1}^3
G(u_k(\tau);c_k)d\tau\nonumber\\
&
\hspace{0.1cm}
+
C
\langle\!\langle u\rangle\!\rangle_T
\biggl(
\sup_{0<t<T}
\langle t\rangle^{-\delta}
{\mathcal N}_4(u(t))
\biggr)^2\nonumber\\
&
\hspace{0.1cm}
+
C
\langle\!\langle u\rangle\!\rangle_T^2
\biggl(
\sup_{0<t<T}
\langle t\rangle^{-\delta}
{\mathcal N}_4(u(t))
+
\sup_{0<t<T}
{\mathcal N}_3(u(t))
\biggr)
\sup_{0<t<T}
\langle t\rangle^{-\delta}
{\mathcal N}_4(u(t))\nonumber
\end{align}
for $0<t<T$. 
$($See $(\ref{definition<<u>>T})$ 
for the definition of 
$\langle\!\langle u\rangle\!\rangle_T$.$)$
\end{proposition}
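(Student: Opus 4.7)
The plan is to apply the Alinhac-type ghost weight inequality \mbox{(\ref{keyghostweight})} to each $v=Z^a u_i$ with speed $c_i$ (where $c_1=c_2=1$, $c_3=c_0$), separately with $|a|\le 2$ for the low-energy bound \mbox{(\ref{2019Nov17LowEnergy})} and with $|a|\le 3$ for the high-energy bound \mbox{(\ref{2019Nov17HighEnergy})}. The $u_1$-contributions are then weighted by $\langle t\rangle^{-2\delta}$ in the low-energy case and $\langle t\rangle^{-4\delta}$ in the high-energy case to match the growth rates $(1+t)^{\delta}$ and $(1+t)^{2\delta}$ declared in \mbox{(\ref{growth1})--(\ref{growth2})}. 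By Lemma \ref{nullpreserved}, the nonlinear right-hand sides expand as in \mbox{(\ref{equality31})}, \mbox{(\ref{equality32})}, and \mbox{(\ref{equality619})}, with the new coefficients preserving the relevant null conditions, so it remains to control $\int_0^t\!\!\int_{\mathbb{R}^3}|\Box_{c_i}Z^a u_i|\,|\partial_t Z^a u_i|\,dx\,d\tau$ term by term.

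The quadratic contributions split into three classes. For \emph{null-form pairs} (coefficients satisfying \mbox{(\ref{nullformlemma})}), Lemma \ref{estimationlemma} exposes a $T^{(c)}$-factor; pairing it through Cauchy--Schwarz with the weight $\langle c\tau-r\rangle^{-(1+2\eta)/2}$ absorbs one piece into the ghost weight functional $G(\cdot;c)$ on the LHS of \mbox{(\ref{keyghostweight})}, while the other factor is handled pointwise via $\langle\!\langle u\rangle\!\rangle$ through the trace inequality \mbox{(\ref{interp})} at $\theta=1/2-\eta$; this mechanism produces the $G$-integral appearing in the third line on the right of \mbox{(\ref{2019Nov17HighEnergy})}. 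For the \emph{non-null pairs} permitted by \mbox{(\ref{assumption4})--(\ref{assumption5})}, we split by cone cutoffs as in Section \ref{sectionmmm}, assign the top-order factor to the Keel--Smith--Sogge weighted norm $L(u_k)$ from \mbox{(\ref{weightlocalenergynorm})}, pair the other pointwise through \mbox{(\ref{interp})} at $\theta=0$ (which furnishes a $\langle t-r\rangle$-weight captured by $\langle\!\langle u\rangle\!\rangle$), and invoke Lemma \ref{ksstype} to control $L(u_k)$; this contributes the $\langle\!\langle u\rangle\!\rangle_T\int \langle\tau\rangle^{-1+2\delta}\bigl(\sum_k L(u_k)\bigr)^2 d\tau$ term. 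The \emph{cubic} remainder is routine: bound two factors pointwise by $\langle\!\langle u\rangle\!\rangle$ and keep one in the $L^2$ energy, producing the $\langle\!\langle u\rangle\!\rangle_T^2$ contributions.

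The main obstacle is closing the high energy of $u_1$ in the presence of the genuinely non-null term $(\partial_t u_1)(\partial_t u_2)$ inside $F_1$: a naive $L^\infty\times L^2$ split puts $\partial u_1$ in $L^\infty$ with decay only $\langle\tau\rangle^{-1+\delta}$, and pairing with $N_4(u_2)\sim\langle\tau\rangle^{\delta}$ barely fails to deliver a $\langle t\rangle^{2\delta}$-bound. The remedy is to move the $u_1$-factor into the KSS norm $L(u_1)$ via mechanism (b), transferring one derivative to $u_1$ while pairing with $\langle t-r\rangle\partial Z^{a''} u_2$ captured by $\langle\!\langle u\rangle\!\rangle$; this replaces $\langle\tau\rangle^{-1+\delta}$ by $\langle\tau\rangle^{-1+2\delta}$ and forces the separation $N_3(u_1)\sim\langle t\rangle^{\delta}$ versus $N_4(u_1)\sim\langle t\rangle^{2\delta}$ visible in \mbox{(\ref{growth1})}. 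For null-form terms in $F_1$ such as $(\partial_t u_1)^2-|\nabla u_1|^2$, mechanism (a) is effective only on $\{|x|>(1+t)/2\}$, since the weight $r^{(1/2)+\theta}$ in \mbox{(\ref{interp})} fails to supply the expected $t$-decay for small $|x|$; inside the cone the null structure is abandoned in favor of mechanism (b), paralleling \mbox{(\ref{2019may261})--(\ref{2019may266})}. Summing the resulting estimates over $a$ and $i$, and collecting initial data contributions into $\sum_k N_\kappa(u_k(0))^2$, produces both displayed inequalities.
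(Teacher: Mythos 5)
Your proposal is correct and follows essentially the same route as the paper: the ghost-weight identity (\ref{keyghostweight}) applied to $Z^a u_i$ with the stated $\langle t\rangle^{-2\delta}$ and $\langle t\rangle^{-4\delta}$ normalizations for $u_1$, the cone splitting with the Keel--Smith--Sogge norms $L(u_k)$ and the trace inequality (\ref{interp}) at $\theta=0$ inside the cone, the null structure via Lemma \ref{estimationlemma} combined with the $G$-functional and (\ref{interp}) at $\theta=(1/2)-\eta$ outside it, the $N_3$/$N_4$ growth-rate separation for the non-null couplings involving $u_1$, and pointwise bounds for the cubic remainder. The only cosmetic differences are that the paper deploys the KSS mechanism for \emph{all} pairs inside the cone (it relies on the bound $\langle\tau\rangle\leq C\langle\tau-r\rangle$ valid there, so it is the region rather than the null structure that dictates its use) while treating non-null pairs outside the cone by the $L^\infty$--$L^2$ splittings of (\ref{2019july191641})--(\ref{2019july191656}), and that Lemma \ref{ksstype} is not actually invoked in proving this proposition, since the $L$-integral is simply left on the right-hand side of (\ref{2019Nov17HighEnergy}) and controlled only later.
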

%%%%%%%%%%%%%%%%%%%%%%%%%%%
\subsection{Energy estimate for $u_1$}
%%%%%%%%%%%%%%%%%%%%%%%%%%%%%%%%%%%%%%%
Note that (\ref{equality31}) remains valid 
for $|a|\leq 3$. 
Using (\ref{keyghostweight}) and (\ref{equality31}), 
we get for $|a|\leq 3$
\begin{align}\label{u1ghostenergy2019july24}
E&(Z^a u_1(t);1)
+
\sum_{j=1}^3
\int_0^t\!\!\int_{{\mathbb R}^3}
\langle \tau-r\rangle^{-1-2\eta}
\bigl(
T_j^{(1)} Z^a u_1(\tau,x)
\bigr)^2
d\tau dx\\
&
\leq
C
E(Z^a u_1(0);1)
+
C
\sum\!{}^{'}\int_0^t J_{11}\,d\tau
+
C
\sum\!{}^{''}\int_0^t J_{12}\,d\tau
+
C
\int_0^t J_{13}\,d\tau,\nonumber
\end{align}
%%%%%%%%%%%%%%%%%%%%%%%%%%%%
where 
\begin{align}
&J_{11}
=
\|
{\tilde F}_1^{11,\alpha\beta}
(\partial_\alpha Z^{a'}u_1)
(\partial_\beta Z^{a''}u_1)
(\partial_t Z^a u_1)
\|_{L^1({\mathbb R}^3)},\label{2019july23j11}\\
&
J_{12}
=
\|
{\tilde F}_1^{jk,\alpha\beta}
(\partial_\alpha Z^{a'}u_j)
(\partial_\beta Z^{a''}u_k)
(\partial_t Z^a u_1)
\|_{L^1({\mathbb R}^3)},\label{2019july23j12}\\
&
J_{13}
=
\|
\bigl(
Z^a C_1(\partial u_1,\partial u_2,\partial u_3)
\bigr)
(\partial_t Z^a u_1)
\|_{L^1({\mathbb R}^3)}.\label{2019july23j13}
\end{align}
%%%%%%%%%%%%%%%%%%%%%%%%%%%
We refer to (\ref{equality31}) for $\sum\!{}^{'}$ and $\sum\!{}^{''}$. 
%%%%%%%%%%%%%%%%%%%%%%%%%%%%%%%%%%%
As for $|a|\leq 2$ we have only to repeat 
quite the same argument as before. 
Indeed, 
as in (\ref{2019may261}) and (\ref{2019may263}) with $\mu=4$, 
we obtain for $|a|\leq 2$
\begin{equation}
J_{11}
\leq
C
\langle\tau\rangle^{-(3/2)+3\delta}
\langle\!\langle u(\tau)\rangle\!\rangle
\bigl(
N_4(u_1(\tau))
+
M_4(u_1(\tau);1)
\bigr)
\bigl(
\langle\tau\rangle^{-\delta}
N_3(u_1(\tau))
\bigr).
\end{equation}
%%%%%%%%%%%%%%%%%%%%%%%%%%%%%%%%
As in (\ref{2019may262}), (\ref{2019may264})--(\ref{2019may266}), 
we get for $|a|\leq 2$, using the notation $c_1=c_2=1$, $c_3=c_0$
%%%%%%%%%%%%%%%%%%%%%
\begin{align}
J_{12}
\leq&
C
\langle\tau\rangle^{-(3/2)+3\delta}
\langle\!\langle u(\tau)\rangle\!\rangle
\biggl(
\sum_{k=1}^3
\bigl(
N_3(u_k(\tau))
+
M_4(u_k(\tau);c_k)
\bigr)
\biggr)
\\
&
\hspace{2cm}
\times
\bigl(
\langle\tau\rangle^{-\delta}
N_3(u_1(\tau))
\bigr)\nonumber\\
&
+
C
\langle\tau\rangle^{-1+2\delta}
\langle\!\langle u(\tau)\rangle\!\rangle
{\mathcal N}_3(u(\tau))
\bigl(
\langle\tau\rangle^{-\delta}
N_3(u_1(\tau))
\bigr).\nonumber
\end{align}
%%%%%%%%%%%%%%%%%%
It is also possible to get for $|a|\leq 2$
\begin{equation}\label{j13lowenergy}
J_{13}
\leq
C
\langle\tau\rangle^{-2+4\delta}
\langle\!\langle u(\tau)\rangle\!\rangle^2
{\mathcal N}_3(u(\tau))
\bigl(
\langle\tau\rangle^{-\delta}
N_3(u_1(\tau))
\bigr).
\end{equation}
%See (\ref{2019june25cubic1}) with $\mu=4$. 
Therefore, we may focus on $|a|\leq 3$. 
Note that we can no longer rely upon the Hardy inequality 
as we have done in (\ref{2019may261}), (\ref{2019may262}). 
(Its use would cause the loss of derivatives, 
and we could not close the argument.) 
As mentioned in Introduction, 
this is one of the places where we need to proceed quite differently 
from \cite{SiderisTu}, and we utilize the weighted norm 
(\ref{weightlocalenergynorm}) associated with (\ref{l2spacetime}). 
Assuming $|a'|\leq |a''|$ 
(and hence $|a'|\leq 1$) 
without loss of generality, we get 
%%%%%%%%%%%%%%%%%%%%%%%
\begin{align}\label{highenergyj112019july25}
\|&
\chi_1
(\partial Z^{a'} u_1)
(\partial Z^{a''}u_1)
(\partial_t Z^a u_1)
\|_{L^1({\mathbb R}^3)}\\
&
\leq
C
\langle\tau\rangle^{-1}
\|
r^{1/2}
\langle\tau-r\rangle
\partial Z^{a'}u_1
\|_{L^\infty({\mathbb R}^3)}
\|
r^{-1/4}
\partial Z^{a''}u_1
\|_{L^2({\mathbb R}^3)}
\|
r^{-1/4}
\partial_t Z^a u_1
\|_{L^2({\mathbb R}^3)}\nonumber\\
&
\leq
C
\langle\tau\rangle^{-1+2\delta}
\langle\!\langle u(\tau)\rangle\!\rangle
L(u_1(\tau))^2.\nonumber
\end{align}
%%%%%%%%%%%%%%%%%%%%%%%%%%%%
Here, the Sobolev embedding 
$W^{1,4}(S^2)\hookrightarrow L^\infty(S^2)$ has been used 
to bound 
$\langle\tau\rangle^{-2\delta}
\|
r^{1/2}
\langle\tau-r\rangle
\partial Z^{a'}u_1
\|_{L^\infty({\mathbb R}^3)}$ 
by a constant-multiple of 
$\langle\!\langle u(\tau)\rangle\!\rangle$. 
%%%%%%%%%%%%%%%%%%%%
Similarly, we get for $(j,k)\ne (1,1)$
%%%%%%%%%%%%%%%%%%%%
\begin{align}\label{highenergyj122019july25}
\|&
\chi_1
(\partial Z^{a'} u_j)
(\partial Z^{a''}u_k)
(\partial_t Z^a u_1)
\|_{L^1({\mathbb R}^3)}\\
&
\leq
C
\langle\tau\rangle^{-1+2\delta}
\langle\!\langle u(\tau)\rangle\!\rangle
\biggl(
\sum_{k=1}^3
L(u_k(\tau))
\biggr)
L(u_1(\tau)).\nonumber
\end{align}
%%%%%%%%%%%%%%%%%%%%%%%%%%%
On the other hand, 
as in (\ref{2019may263}), 
we employ (\ref{htd20190718}) to get
%%%%%%%%%%%%%%%%%%%%%%%
\begin{align}\label{2019july181530}
\|&
\chi_2
{\tilde F}_1^{11,\alpha\beta}
(\partial_\alpha Z^{a'}u_1)
(\partial_\beta Z^{a''}u_1)
(\partial_t Z^a u_1)
\|_{L^1({\mathbb R}^3)}\\
&
\leq
C
\sum_{|a'|+|a''|\leq 3}
\bigl(
\|
\chi_2
(T^{(1)}Z^{a'}u_1)
(\partial Z^{a''}u_1)
\|_{L^2({\mathbb R}^3)}\nonumber\\
&
\hspace{2.8cm}
+
\|
\chi_2
(\partial Z^{a'}u_1)
(T^{(1)}Z^{a''}u_1)
\|_{L^2({\mathbb R}^3)}
\bigr)N_4(u_1).\nonumber
\end{align}
%%%%%%%%%%%%%%%%%%%%%%%%%%%%%%%%%
To continue the estimate of (\ref{2019july181530}), 
we may assume $|a'|\leq |a''|$ 
(hence $|a'|\leq 1$) 
by symmetry. 
Using simply the $L^\infty({\mathbb R}^3)$ norm 
(together with $W^{1,4}(S^2)\hookrightarrow L^\infty(S^2)$) 
and the $L^2$ norm 
in place of the $L_r^\infty L_\omega^4$ and 
the $L_r^2 L_\omega^4$ norms, 
we naturally modify the argument in (\ref{2019may263}) to get
%%%%%%%%%%%%%%%%%%%%%%%%%%%%%%
\begin{equation}\label{2019july251729}
\|
\chi_2
(T^{(1)}Z^{a'}u_1)
(\partial Z^{a''}u_1)
\|_{L^2({\mathbb R}^3)}
\leq
C
\langle\tau\rangle^{-(3/2)+2\delta}
\langle\!\langle u(\tau)\rangle\!\rangle
N_4(u_1(\tau)).
\end{equation}
%%%%%%%%%%%%%%%%%%%%%%%%%%%%%%%%
Moreover, using (\ref{interp}) with $\theta=(1/2)-\eta$ and $c=1$, 
we obtain 
%%%%%%%%%%%%%%%%%%
\begin{equation}\label{2019july251730}
\|
\chi_2
(\partial Z^{a'}u_1)
(T^{(1)}Z^{a''}u_1)
\|_{L^2({\mathbb R}^3)}
\leq
C
\langle\tau\rangle^{-1+\eta+2\delta}
\langle\!\langle u(\tau)\rangle\!\rangle
G(u_1(\tau);1).
\end{equation}
%%%%%%%%%%%%%%%%%%%%%%%%%%%%%%%%%%%%%%%%
To handle 
\begin{equation}
\sum\!{}^{''}
\|
\chi_2
{\tilde F}^{jk,\alpha\beta}_1
(\partial_\alpha Z^{a'} u_j)
(\partial_\beta Z^{a''} u_k)
(\partial_t Z^a u_1)
\|_{L^1({\mathbb R}^3)},
\end{equation}
we focus on the estimate of 
%%%%%%%%%%%%%%%%%%%%%
\begin{equation}\label{2019july191626}
\|
\chi_2
(\partial Z^{a'} u_j)
(\partial Z^{a''} u_k)
(\partial_t Z^a u_1)
\|_{L^1({\mathbb R}^3)}
\end{equation}
%%%%%%%%%%%%%%%%%%%%%
for $|a|\leq 3$, 
$|a'|+|a''|\leq 3$, and 
$(j,k)\ne (1,1)$, 
because of lack of the null condition 
on the coefficients 
$\{F_1^{jk,\alpha\beta}\}$ with 
$(j,k)\ne (1,1)$. 
Unlike (\ref{2019july181530}), 
we fully utilize the different growth rates for 
the high energy and the low energy of $u_1$. 
Without loss of generality, 
we may suppose $j\ne 1$ in (\ref{2019july191626}). 
When $|a'|=0$ (and hence $|a''|\leq 3$), 
we get
\begin{align}\label{2019july191641}
\|&
\chi_2
(\partial u_j)
(\partial Z^{a''} u_k)
(\partial_t Z^a u_1)
\|_{L^1({\mathbb R}^3)}\\
&
\leq
C
\langle\tau\rangle^{-1+4\delta}
\|
r\partial u_j
\|_{L^\infty({\mathbb R}^3)}
\bigl(
\langle\tau\rangle^{-2\delta}
\|
\partial Z^{a''} u_k
\|_{L^2({\mathbb R}^3)}
\bigr)
\bigl(
\langle\tau\rangle^{-2\delta}
N_4(u_1(\tau))
\bigr)\nonumber\\
&
\leq
C
\langle\tau\rangle^{-1+4\delta}
\langle\!\langle u(\tau)\rangle\!\rangle
\bigl(
\langle\tau\rangle^{-\delta}
{\mathcal N}_4(u(\tau))
\bigr)
\bigl(
\langle\tau\rangle^{-2\delta}
N_4(u_1(\tau))
\bigr).\nonumber
\end{align}
%%%%%%%%%%%%%%%%%%%%%%%%%%%%%
When $|a'|=1$ (and hence $|a''|\leq 2$), 
we employ the $L_r^\infty L_\omega^4$ norm 
and the $L_r^2 L_\omega^4$ norm 
(together with $W^{1,2}(S^2)\hookrightarrow L^4(S^2)$) 
in place of the $L^\infty({\mathbb R}^3)$ norm and 
the $L^2({\mathbb R}^3)$ norm, 
to get the same bound as in (\ref{2019july191641}). 
When $|a'|=2$ (and hence $|a''|\leq 1$), 
we obtain
\begin{align}\label{2019july191656}
\|&
\chi_2
(\partial Z^{a'}u_j)
(\partial Z^{a''} u_k)
(\partial_t Z^a u_1)
\|_{L^1({\mathbb R}^3)}\\
&
\leq
C
\langle\tau\rangle^{-1+4\delta}
\bigl(
\langle\tau\rangle^{-\delta}
\|
\partial Z^{a'} u_j
\|_{L_r^2 L_\omega^4}
\bigr)
\bigl(
\langle\tau\rangle^{-\delta}
\|
r\partial Z^{a''}u_k
\|_{L_r^\infty L_\omega^4}
\bigr)
\bigl(
\langle\tau\rangle^{-2\delta}
N_4(u_1(\tau))
\bigr)\nonumber\\
&
\leq
C
\langle\tau\rangle^{-1+4\delta}
\bigl(
\langle\tau\rangle^{-\delta}
{\mathcal N}_4(u(\tau))
\bigr)
\langle\!\langle u(\tau)\rangle\!\rangle
\bigl(
\langle\tau\rangle^{-2\delta}
N_4(u_1(\tau))
\bigr).\nonumber
\end{align}
%%%%%%%%%%%%%%%%%%%%%%%%%%%%%
For $|a'|=3$ (and hence $|a''|=0$), 
we employ the $L^2({\mathbb R}^3)$ norm 
and the $L^\infty({\mathbb R}^3)$ norm 
in place of the $L_r^2 L_\omega^4$ norm 
and the $L_r^\infty L_\omega^4$ norm, 
to get the same bound as in (\ref{2019july191656}). 

It remains to bound (\ref{2019july23j13}) for $|a|\leq 3$. 
It is possible to get 
\begin{equation}\label{2019july231855}
J_{13}
\leq
C
\langle\tau\rangle^{-2+6\delta}
\langle\!\langle u(\tau)\rangle\!\rangle^2
\bigl(
\langle\tau\rangle^{-\delta}
{\mathcal N}_4(u(\tau))
+
{\mathcal N}_3(u(\tau))
\bigr)
\bigl(
\langle\tau\rangle^{-2\delta}
N_4(u_1(\tau))
\bigr).
\end{equation}
%%%%%%%%%%%%%%%%%%%%%%%%%
It suffices to handle such a typical cubic term as 
$
(\partial_t Z^{a'}u_1)
(\partial_t Z^{a''}u_1)
(\partial_t Z^{a'''}u_1)
$ 
with 
$|a'|+|a''|+|a'''|=3$, 
to show (\ref{2019july231855}). 
We get 
%%%%%%%%%%%%%%%%%%%%%%%%%%%%%%%%%%%
\begin{align}
\biggl(
&
\sum_{|a'|=3}
\|
\chi_1
(\partial_t Z^{a'}u_1)
(\partial_t u_1)^2
\|_{L^2({\mathbb R}^3)}\\
&
\hspace{0.1cm}
+
\sum_{{|a'|=2}\atop{|a''|=1}}
\|
\chi_1
(\partial_t Z^{a'}u_1)
(\partial_t Z^{a''}u_1)
(\partial_t u_1)
\|_{L^2({\mathbb R}^3)}\nonumber\\
&
\hspace{0.1cm}
+
\sum_{{|a'|=|a'|}\atop{=|a'''|=1}}
\|
\chi_1
(\partial_t Z^{a'}u_1)
(\partial_t Z^{a''}u_1)
(\partial_t Z^{a'''}u_1)
\|_{L^2({\mathbb R}^3)}
\biggr)
N_4(u_1)\nonumber\\
&
\leq
C
\langle\tau\rangle^{-2}
\biggl(
\sum_{|a'|=3}
\|
\partial_t Z^{a'}u_1
\|_{L^2({\mathbb R}^3)}
\|\langle\tau-r\rangle
\partial_t u_1
\|_{L^\infty({\mathbb R}^3)}^2
\nonumber\\
&
\hspace{1.4cm}
+
\sum_{{|a'|=2}\atop{|a''|=1}}
\|
\partial_t Z^{a'}u_1
\|_{L^3({\mathbb R}^3)}
\|
\langle\tau-r\rangle
\partial_t Z^{a''}u_1
\|_{L^6({\mathbb R}^3)}
\|
\langle\tau-r\rangle
\partial_t u_1
\|_{L^\infty({\mathbb R}^3)}\nonumber\\
&
\hspace{1.4cm}
+
\sum_{{|a'|=|a'|}\atop{=|a'''|=1}}
\|
\langle\tau-r\rangle
\partial_t Z^{a'}u_1
\|_{L^\infty({\mathbb R}^3)}
\|
\langle\tau-r\rangle
\partial_t Z^{a''}u_1
\|_{L^6({\mathbb R}^3)}\nonumber\\
&
\hspace{7cm}
\times
\|
\partial_t Z^{a'''}u_1
\|_{L^3({\mathbb R}^3)}
\biggr)
N_4(u_1)\nonumber\\
&
\leq
C
\langle\tau\rangle^{-2+6\delta}
\langle\!\langle u(\tau)\rangle\!\rangle^2
\bigl(
\langle\tau\rangle^{-2\delta}
N_4(u_1(\tau))
+
\langle\tau\rangle^{-\delta}
N_3(u_1(\tau))
\bigr)\nonumber\\
&
\hspace{3.4cm}
\times
\bigl(
\langle\tau\rangle^{-2\delta}
N_4(u_1(\tau))
\bigr).\nonumber
\end{align}
%%%%%%%%%%%%%%%%%%%%%%%%%%%%%%%
We also obtain
%%%%%%%%%%%%%%%%%%%%%%%%%
\begin{align}
\biggl(
&
\sum_{|a'|=3}
\|
\chi_2
(\partial_t Z^{a'}u_1)
(\partial_t u_1)^2
\|_{L^2({\mathbb R}^3)}\\
&
\hspace{0.1cm}
+
\sum_{{|a'|=2}\atop{|a''|=1}}
\|
\chi_2
(\partial_t Z^{a'}u_1)
(\partial_t Z^{a''}u_1)
(\partial_t u_1)
\|_{L^2({\mathbb R}^3)}\nonumber\\
&
\hspace{0.1cm}
+
\sum_{{|a'|=|a'|}\atop{=|a'''|=1}}
\|
\chi_2
(\partial_t Z^{a'}u_1)
(\partial_t Z^{a''}u_1)
(\partial_t Z^{a'''}u_1)
\|_{L^2({\mathbb R}^3)}
\biggr)
N_4(u_1)\nonumber\\
&
\leq
C
\langle\tau\rangle^{-2}
\biggl(
\sum_{|a'|=3}
\|
\partial_t Z^{a'}u_1
\|_{L^2({\mathbb R}^3)}
\|
r\partial_t u_1
\|_{L^\infty({\mathbb R}^3)}^2\nonumber\\
&
\hspace{0.1cm}
+
\sum_{{|a'|=2}\atop{|a''|=1}}
\|
\partial_t Z^{a'}u_1
\|_{L^2_r L^4_\omega}
\|
r\partial_t Z^{a''}u_1
\|_{L^\infty_r L^4_\omega}
\|
r\partial_t u_1
\|_{L^\infty({\mathbb R}^3)}\nonumber\\
&
\hspace{0.1cm}
+
\sum_{{|a'|=|a'|}\atop{=|a'''|=1}}
\|
\partial_t Z^{a'}u_1
\|_{L^2_r L^\infty_\omega}
\|
r\partial_t Z^{a''}u_1
\|_{L^\infty_r L^4_\omega}
\|
r\partial_t Z^{a'''}u_1
\|_{L^\infty_r L^4_\omega}
\biggr)
N_4(u_1)\nonumber\\
&
\leq
C
\langle\tau\rangle^{-2+6\delta}
\bigl(
\langle\tau\rangle^{-2\delta}
N_4(u_1)
\bigr)^2
\langle\!\langle u(\tau)\rangle\!\rangle^2.\nonumber
\end{align}
%%%%%%%%%%%%%%%%%%%%%%%%%%%%%%
With the notation 
\begin{equation}\label{definition<<u>>T}
\langle\!\langle u\rangle\!\rangle_T
:=
\sup_{0<t<T}
\langle\!\langle u(t)\rangle\!\rangle,
\end{equation}
summing yields for $|a|\leq 2$
%%%%%%%%%%%%%%%%%%%%%%%%%%%
\begin{align}\label{2019aug201631}
\langle&t\rangle^{-2\delta}E(Z^a u_1(t);1)
%+
%\int_0^t
%G(Z^a u_1(\tau);1)^2
%d\tau
\\
&
\leq
C
E(Z^a u_1(0);1)\nonumber\\
&
+
C
\langle\!\langle u\rangle\!\rangle_T
\biggl(
\sup_{0<t<T}
\langle t\rangle^{-\delta}
{\mathcal N}_4(u(t))
+
\sup_{0<t<T}
\langle t\rangle^{-\delta}
{\mathcal M}_4(u(t))
\biggr)
\sup_{0<t<T}
{\mathcal N}_3(u(t))\nonumber\\
&
+
C
%\langle t\rangle^{2\delta}
\langle\!\langle u\rangle\!\rangle_T
\biggl(
\sup_{0<t<T}
{\mathcal N}_3(u(t))
\biggr)^2
+
C
\langle\!\langle u\rangle\!\rangle_T^2
\biggl(
\sup_{0<t<T}
{\mathcal N}_3(u(t))
\biggr)^2,\nonumber
\end{align}
%%%%%%%%%%%%%%%%%%%%%%%
and for $|a|\leq 3$
%%%%%%%%%%%%%%%%%%%%%%%%%%%%%%%
\begin{align}\label{2019aug61027}
\langle&t\rangle^{-4\delta}
E(Z^a u_1(t);1)
+
\langle t\rangle^{-4\delta}
\int_0^t
G(u_1(\tau);1)^2
d\tau\\
&
\leq
C
E(Z^a u_1(0);1)\nonumber\\
&
+
C
\langle\!\langle u\rangle\!\rangle_T
\int_0^t
\langle\tau\rangle^{-1+2\delta}
\biggl(
\sum_{k=1}^3
L(u_k(\tau))
\biggr)
L(u_1(\tau))
d\tau\nonumber\\
&
+
C
\langle\!\langle u\rangle\!\rangle_T
\biggl(
\sup_{0<t<T}
\langle t\rangle^{-\delta}
{\mathcal N}_4(u(t))
\biggr)
\int_0^t
\langle\tau\rangle^{-1+\eta+4\delta}
G(u_1(\tau);1)
d\tau\nonumber\\
&
+
C
\langle\!\langle u\rangle\!\rangle_T
\biggl(
\sup_{0<t<T}
\langle t\rangle^{-\delta}
{\mathcal N}_4(u(t))
\biggr)^2\nonumber\\
&
+
C
\langle\!\langle u\rangle\!\rangle_T^2
\biggl(
\sup_{0<t<T}
\langle t\rangle^{-\delta}
{\mathcal N}_4(u(t))
+
\sup_{0<t<T}
{\mathcal N}_3(u(t))
\biggr)
\sup_{0<t<T}
\langle t\rangle^{-\delta}
{\mathcal N}_4(u(t)).\nonumber
\end{align}
%%%%%%%%%%%%%%%%%%%%%%%%%%%%
\subsection{Energy estimate for $u_2$.} 
As in (\ref{u1ghostenergy2019july24}), we get for $|a|\leq 3$
\begin{align}\label{u2ghostenergy2019july24}
E&(Z^a u_2(t);1)
+
\sum_{j=1}^3
\int_0^t\!\!\int_{{\mathbb R}^3}
\langle \tau-r\rangle^{-1-2\eta}
\bigl(
T_j^{(1)} Z^a u_2(\tau,x)
\bigr)^2
d\tau dx\\
&
\leq
C
E(Z^a u_2(0);1)\nonumber\\
&
\hspace{0.1cm}
+
C
\sum_{{(j,k)=(1,1),}\atop{(1,2),(2,2)}}
\sum\!{}^{'}
\int_0^t J_{21}\,d\tau
+
C
\sum_{{(j,k)=(2,3),}\atop{(3,3)}}
\sum\!{}^{'}
\int_0^t J_{21}\,d\tau
+
C
\int_0^t J_{22}\,d\tau,\nonumber
\end{align}
%%%%%%%%%%%%%%%%%%%%%%%%%%%%%%
here we have set
\begin{equation}
J_{21}
=J_{21}^{(j,k)}
:=
\|
{\tilde F}_2^{jk,\alpha\beta}
(\partial_\alpha Z^{a'}u_j)
(\partial_\beta Z^{a''}u_k)
(\partial_t Z^a u_2)
\|_{L^1({\mathbb R}^3)}
\end{equation}
(Note that the summation convention 
only for the Greek letters $\alpha$ and $\beta$ 
has been used above, and 
the coefficients ${\tilde F}_2^{jk,\alpha\beta}$ actually 
depend also on $a'$, $a''$.), 
and 
\begin{equation}
J_{22}
:=
\|
\bigl(
Z^a C_2(\partial u_1,\partial u_2,\partial u_3)
\bigr)
(\partial_t Z^a u_2)
\|_{L^1({\mathbb R}^3)}.
\end{equation}
%%%%%%%%%%%%%%%%%%%%%%%%%%%%%%%%%
Let us first consider the low energy $|a|\leq 2$. 
As in (\ref{june19ineq1})--(\ref{june19ineq2}), 
it is possible to obtain 
%%%%%%%%%%%%%%%%%%%%%%%%
\begin{align}\label{2019july281550}
\|&
\chi_1
(\partial Z^{a'}u_j)
(\partial Z^{a''}u_k)
(\partial_t Z^a u_2)
\|_{L^1({\mathbb R}^3)}\\
&
\leq
C
\langle\tau\rangle^{-(3/2)+4\delta}
\langle\!\langle u(\tau)\rangle\!\rangle
\bigl(
{\mathcal N}_3(u(\tau))
+
\langle\tau\rangle^{-\delta}
{\mathcal M}_4(u(\tau))
\bigr)
N_3(u_2(\tau)).\nonumber
\end{align}
%%%%%%%%%%%%%%%%%%%%%%%
On the other hand, 
for $(j,k)=(1,1), (1,2)$, and $(2,2)$, 
we benefit from the null condition and obtain 
%%%%%%%%%%%%%%%%%%%%%%%%%
\begin{align}\label{2019july281551}
\|&
\chi_2
{\tilde F}_2^{jk,\alpha\beta}
(\partial_\alpha Z^{a'}u_j)
(\partial_\beta Z^{a''}u_k)
(\partial_t Z^a u_2)
\|_{L^1({\mathbb R}^3)}\\
&
\leq
C
\langle\tau\rangle^{-(3/2)+4\delta}
\langle\!\langle u(\tau)\rangle\!\rangle
\bigl(
\langle\tau\rangle^{-2\delta}
N_4(u_1(\tau))
+
\langle\tau\rangle^{-\delta}
N_4(u_2(\tau))
\bigr)
N_3(u_2(\tau))\nonumber
\end{align}
%%%%%%%%%%%%%%%%%%%%%%%%%
as in (\ref{2019may263}). 
For $(j,k)=(2,3), (3,3)$, 
we divide the set 
$\{x\in{\mathbb R}^3\,:\,|x|>(c_*/2)t+1\}$ 
($c_*=\min\{c_0,1\}$) into 
$$
\biggl\{
x\in{\mathbb R}^3\,:\,
\frac{c_*}{2}t+1<|x|<\frac{c_0+1}{2}t+1
\biggr\}
\mbox{ and }
\biggl\{
x\in{\mathbb R}^3\,:\,
|x|>\frac{c_0+1}{2}t+1
\biggr\},
$$
and obtain 
for $j=2,3$, $|a'|+|a''|\leq 2$, and $|a|\leq 2$
\begin{align}\label{2019july281626}
\|&
\chi_2
(\partial Z^{a'}u_j)
(\partial Z^{a''}u_3)
(\partial_t Z^a u_2)
\|_{L^1({\mathbb R}^3)}\\
&
\leq
C
\langle\tau\rangle^{-(3/2)}
\|
\partial Z^{a'}u_j
\|_{L^2({\mathbb R}^3)}
\bigl(
\|
r^{1/2}
\langle c_0\tau-r\rangle
\partial Z^{a''}u_3
\|_{L_r^\infty L_\omega^4}
\|
\partial_t Z^a u_2
\|_{L_r^2 L_\omega^4}\nonumber\\
&
\hspace{5cm}
+
\|
\partial Z^{a''}u_3
\|_{L_r^2 L_\omega^4}
\|
r^{1/2}
\langle\tau-r\rangle
\partial_t Z^a u_2
\|_{L_r^\infty L_\omega^4}
\bigr)\nonumber\\
&
\leq
C
\langle\tau\rangle^{-(3/2)+2\delta}
\langle\!\langle u(\tau)\rangle\!\rangle
\bigl(
N_3(u_2(\tau))+N_3(u_3(\tau))
\bigr)\nonumber\\
&
\hspace{3.7cm}
\times
\bigl(
\langle\tau\rangle^{-\delta}
N_4(u_2(\tau))
+
\langle\tau\rangle^{-\delta}
N_4(u_3(\tau))
\bigr)\nonumber
\end{align}
%%%%%%%%%%%%%%%%%%%%%%%%%%%
by considering the two cases
$c_0<1$ and $c_0>1$, separately. 
It is also possible to get for $|a|\leq 2$
\begin{equation}\label{j22lowenergy}
J_{22}
\leq
C
\langle\tau\rangle^{-2+3\delta}
\langle\!\langle u(\tau)\rangle\!\rangle^2
{\mathcal N}_3(u(\tau))
N_3(u_2(\tau)).
\end{equation}
%%%%%%%%%%%%%%%%%%%%%%%%%%%%%
Summing yields for $|a|\leq 2$
\begin{align}\label{2019aug201638}
E&(Z^a u_2(t);1)
%+
%\int_0^t
%G(Z^a u_2(\tau);1)^2
%d\tau
\\
&
\leq
C
E(Z^a u_2(0);1)\nonumber\\
&
\hspace{0.1cm}
+
C
\langle\!\langle u\rangle\!\rangle_T
\biggl(
\sup_{0<t<T}
\langle t\rangle^{-\delta}
{\mathcal N}_4(u(t))
+
\sup_{0<t<T}
\langle t\rangle^{-\delta}
{\mathcal M}_4(u(t))
\biggr)
\sup_{0<t<T}
{\mathcal N}_3(u(t))\nonumber\\
&
\hspace{0.1cm}
+
C
\langle\!\langle u\rangle\!\rangle_T^2
\biggl(
\sup_{0<t<T}
{\mathcal N}_3(u(t))
\biggr)^2.\nonumber
\end{align}
%%%%%%%%%%%%%%%%%%%%%%%%%%%%%
Let us turn our attention to the high energy $|a|\leq 3$. 
Proceeding as in (\ref{highenergyj112019july25}) 
and (\ref{highenergyj122019july25}), 
we get for $|a'|+|a''|\leq 3$
%%%%%%%%%%%%%%%%%%%%%%%
\begin{align}\label{2019july291431}
\|&
\chi_1
(\partial Z^{a'}u_j)
(\partial Z^{a''}u_k)
(\partial_t Z^a u_2)
\|_{L^1({\mathbb R}^3)}\\
&
\leq
C
\langle\tau\rangle^{-1+2\delta}
\langle\!\langle u(\tau)\rangle\!\rangle
\biggl(
\sum_{k=1}^3
L(u_k(\tau))
\biggr)
L(u_2(\tau)).\nonumber
\end{align}
%%%%%%%%%%%%%%%%%%%%%%%%%%%%%%%%
On the other hand, 
for $(j,k)=(1,1), (1,2), (2,2)$, 
we rely upon the null condition to get
%%%%%%%%%%%%%%%%%%%%%%%%%%5
\begin{align}\label{2019july291507}
&
\sum_{{(j,k)=(1,1),}\atop{(1,2),(2,2)}}
\|
\chi_2
{\tilde F}_2^{jk,\alpha\beta}
(\partial Z^{a'}u_j)
(\partial Z^{a''}u_k)
(\partial_t Z^a u_2)
\|_{L^1({\mathbb R}^3)}\\
&
\leq
C
\langle\tau\rangle^{-(3/2)+4\delta}
\langle\!\langle u(\tau)\rangle\!\rangle
\bigl(
\langle\tau\rangle^{-2\delta}
N_4(u_1(\tau))
+
\langle\tau\rangle^{-\delta}
N_4(u_2(\tau))
\bigr)
N_4(u_2(\tau))\nonumber\\
&
\hspace{0.1cm}
+
C
\langle\tau\rangle^{-1+\eta+2\delta}
\langle\!\langle u(\tau)\rangle\!\rangle
\biggl(
\sum_{i=1,2}
G(u_i(\tau);1)
\biggr)
N_4(u_2(\tau))\nonumber
\end{align}
%%%%%%%%%%%%%%%%%%%%%%%%%%%%%%%%%%%%%%%%%
in the same way as in (\ref{2019july181530}), 
(\ref{2019july251729}), and (\ref{2019july251730}). 
For $(j,k)=(2,3), (3,3)$, 
we can no longer rely upon the null condition. 
Instead, we rely upon the fact 
$\min\{|a'|,|a''|\}\leq 1$ 
for $|a'|+|a''|\leq 3$. 
Proceeding as in (\ref{2019july191641}) and (\ref{2019july191656}), 
we then obtain
%%%%%%%%%%%%%%%%%%%%%%%%%%%%%%%%%%%%%%%%%%
\begin{align}\label{2019july291525}
&
\sum_{j=2,3}
\|
\chi_2
(\partial Z^{a'}u_j)
(\partial Z^{a''}u_3)
(\partial_t Z^a u_2)
\|_{L^1({\mathbb R}^3)}\\
&
\hspace{0.1cm}
\leq
C
\langle\tau\rangle^{-1+2\delta}
\langle\!\langle u(\tau)\rangle\!\rangle
\bigl(
\langle\tau\rangle^{-\delta}
N_4(u_2(\tau))
+
\langle\tau\rangle^{-\delta}
N_4(u_3(\tau))
\bigr)
\langle\tau\rangle^{-\delta}
N_4(u_2(\tau)).\nonumber
\end{align}
%%%%%%%%%%%%%%%%%%%%
Finally, we get for $|a|\leq 3$
\begin{equation}
J_{22}
\leq
C
\langle\tau\rangle^{-2+5\delta}
\langle\!\langle u(\tau)\rangle\!\rangle^2
\bigl(
\langle\tau\rangle^{-\delta}
{\mathcal N}_4(u(\tau))
+
{\mathcal N}_3(u(\tau))
\bigr)
\bigl(
\langle\tau\rangle^{-\delta}
N_4(u_2(\tau))
\bigr)
\end{equation}
in the same way as in (\ref{2019july231855}). 
Summing yields for $|a|\leq 3$
\begin{align}\label{2019aug61028}
\langle&t\rangle^{-2\delta}
E(Z^a u_2(t);1)
+
\langle t\rangle^{-2\delta}
\int_0^t
G(u_2(\tau);1)^2
d\tau\\
&
\leq
C
E(Z^a u_2(0);1)\nonumber\\
&
\hspace{0.1cm}
+
C
\langle\!\langle u\rangle\!\rangle_T
\int_0^t
\langle\tau\rangle^{-1+2\delta}
\biggl(
\sum_{k=1}^3
L(u_k(\tau))
\biggr)
L(u_2(\tau))d\tau\nonumber\\
&
\hspace{0.1cm}
+
C
\langle\!\langle u\rangle\!\rangle_T
\biggl(
\sup_{0<t<T}
\langle t\rangle^{-\delta}
{\mathcal N}_4(u(t))
\biggr)
\int_0^t
\langle\tau\rangle^{-1+\eta+3\delta}
\biggl(
\sum_{i=1,2}
G(u_i(\tau);1)
\biggr)
d\tau\nonumber\\
&
\hspace{0.1cm}
+
C
\langle\!\langle u\rangle\!\rangle_T
\biggl(
\sup_{0<t<T}
\langle t\rangle^{-\delta}
{\mathcal N}_4(u(t))
\biggr)^2\nonumber\\
&
\hspace{0.1cm}
+
C
\langle\!\langle u\rangle\!\rangle_T^2
\biggl(
\sup_{0<t<T}
\langle t\rangle^{-\delta}
{\mathcal N}_4(u(t))
+
\sup_{0<t<T}
{\mathcal N}_3(u(t))
\biggr)
\sup_{0<t<T}
\langle t\rangle^{-\delta}
{\mathcal N}_4(u(t)).\nonumber
\end{align}
%%%%%%%%%%%%%%%%%%%%%%%%%%%%%%%%%%%%%%%%%
\subsection{Energy estimate for $u_3$.} 
As in (\ref{u1ghostenergy2019july24}), we get for $|a|\leq 3$
\begin{align}\label{u3ghostenergy2019july24}
E&(Z^a u_3(t);c_0)
+
\sum_{j=1}^3
\int_0^t\!\!\int_{{\mathbb R}^3}
\langle c_0\tau-r\rangle^{-1-2\eta}
\bigl(
T_j^{(c_0)} Z^a u_3(\tau,x)
\bigr)^2
d\tau dx\\
&
\leq
C
E(Z^a u_3(0);c_0)
+
C
\sum_{{(j,k)=(1,1),}\atop{(1,2)}}
\sum\!{}^{'}
\int_0^t J_{31}\,d\tau
+
C
\sum\!{}^{'}
\int_0^t J_{32}\,d\tau\nonumber\\
&
+
C
\sum_{k=2,3}
\sum\!{}^{'}
\int_0^t J_{33}\,d\tau
+
C
\int_0^t J_{34}\,d\tau.\nonumber
\end{align}
%%%%%%%%%%%%%%%%%%%%%%%%%%%%%%
Here we have set
\begin{equation}
J_{31}
=J_{31}^{(j,k)}
:=
\|
{\tilde F}_3^{jk,\alpha\beta}
(\partial_\alpha Z^{a'}u_j)
(\partial_\beta Z^{a''}u_k)
(\partial_t Z^a u_3)
\|_{L^1({\mathbb R}^3)},
\end{equation}
(Note that the summation convention 
only for the Greek letters $\alpha$ and $\beta$ 
has been used above.)
%%%%%%%%%%%%%%%%%%%%%%%
\begin{align}
&J_{32}
:=
\|
{\tilde F}_3^{33,\alpha\beta}
(\partial_\alpha Z^{a'}u_3)
(\partial_\beta Z^{a''}u_3)
(\partial_t Z^a u_3)
\|_{L^1({\mathbb R}^3)},\\
&
J_{33}=J_{33}^{(k)}
:=
\|
{\tilde F}_3^{2k,\alpha\beta}
(\partial_\alpha Z^{a'}u_2)
(\partial_\beta Z^{a''}u_k)
(\partial_t Z^a u_3)
\|_{L^1({\mathbb R}^3)},
\end{align}
(Note that 
the coefficients ${\tilde F}_3^{jk,\alpha\beta}$ actually 
depend also on $a'$, $a''$.), 
and 
\begin{equation}
J_{34}
:=
\|
\bigl(
Z^a C_3(\partial u_1,\partial u_2,\partial u_3)
\bigr)
(\partial_t Z^a u_3)
\|_{L^1({\mathbb R}^3)}.
\end{equation}
%%%%%%%%%%%%%%%%%%%%
Let us first consider the low energy $|a|\leq 2$. 
In the same way as in (\ref{2019july281550})--(\ref{2019july281551}), 
we obtain
%%%%%%%%%%%%%%%%%%%%%%%%%%%%%%%%%
\begin{equation}
J_{31}
\leq
C
\langle\tau\rangle^{-(3/2)+4\delta}
\langle\!\langle u(\tau)\rangle\!\rangle
\bigl(
\langle\tau\rangle^{-\delta}
{\mathcal N}_4(u(\tau))
+
\langle\tau\rangle^{-\delta}
{\mathcal M}_4(u(\tau))
\bigr)
N_3(u_3(\tau)).
\end{equation}
%%%%%%%%%%%%%%%%%%%%%%%%%
Since $\{{\tilde F}_3^{33,\alpha\beta}\}$ satisfies the null condition 
(\ref{assumption3}), 
we also get
%%%%%%%%%%%%%%%%%%%%%%%%%%%%
\begin{equation}
J_{32}
\leq
C
\langle\tau\rangle^{-(3/2)+2\delta}
\langle\!\langle u(\tau)\rangle\!\rangle
\bigl(
\langle\tau\rangle^{-\delta}
{\mathcal N}_4(u(\tau))
+
\langle\tau\rangle^{-\delta}
{\mathcal M}_4(u(\tau))
\bigr)
N_3(u_3(\tau)).
\end{equation}
%%%%%%%%%%%%%%%%%%%%%%%%%%%%%%%
For $J_{33}$, we proceed as in (\ref{2019july281550}) 
and (\ref{2019july281626}), to get
%%%%%%%%%%%%%%%%%%%%%%%%%%%
\begin{equation}
J_{33}
\leq
C
\langle\tau\rangle^{-(3/2)+2\delta}
\langle\!\langle u(\tau)\rangle\!\rangle
\bigl(
\langle\tau\rangle^{-\delta}
{\mathcal N}_4(u(\tau))
+
\langle\tau\rangle^{-\delta}
{\mathcal M}_4(u(\tau))
\bigr)
{\mathcal N}_3(u(\tau)).
\end{equation}
%%%%%%%%%%%%%%%%%%%%%%%%%
It is possible to get for $|a|\leq 2$
\begin{equation}
J_{34}
\leq
C
\langle\tau\rangle^{-2+3\delta}
\langle\!\langle u(\tau)\rangle\!\rangle^2
{\mathcal N}_3(u(\tau))
N_3(u_3(\tau)).
\end{equation}
%%%%%%%%%%%%%%%%%%%%%%%%%%%%%
Summing yields for $|a|\leq 2$
\begin{align}\label{2019aug201641}
E&(Z^a u_3(t);c_0)
%+
%\int_0^t
%G(Z^a u_3(\tau);c_0)^2
%d\tau
\\
&
\leq
C
E(Z^a u_3(0);c_0)\nonumber\\
&
\hspace{0.1cm}
+
C
\langle\!\langle u\rangle\!\rangle_T
\biggl(
\sup_{0<t<T}
\langle t\rangle^{-\delta}
{\mathcal N}_4(u(t))
+
\sup_{0<t<T}
\langle t\rangle^{-\delta}
{\mathcal M}_4(u(t))
\biggr)
\sup_{0<t<T}
{\mathcal N}_3(u(t))\nonumber\\
&
\hspace{0.1cm}
+
C
\langle\!\langle u\rangle\!\rangle_T^2
\biggl(
\sup_{0<t<T}{\mathcal N}_3(u(t))
\biggr)^2.\nonumber
\end{align}
%%%%%%%%%%%%%%%%%%%%%%%%%%%%%%%%%%%
As for the high energy $|a|\leq 3$, 
we obtain
\begin{align}
J&_{31},\,J_{32}\\
&
\leq
C
\langle\tau\rangle^{-1+2\delta}
\langle\!\langle u(\tau)\rangle\!\rangle
\biggl(
\sum_{k=1}^3
L(u_k(\tau))
\biggr)
L(u_3(\tau))\nonumber\\
&
\hspace{0.1cm}
+
C
\langle\tau\rangle^{-(3/2)+4\delta}
\langle\!\langle u(\tau)\rangle\!\rangle
\bigl(
\langle\tau\rangle^{-\delta}
{\mathcal N}_4(u(\tau))
\bigr)
N_4(u_3(\tau))\nonumber\\
&
\hspace{0.1cm}
+
C
\langle\tau\rangle^{-1+\eta+2\delta}
\langle\!\langle u(\tau)\rangle\!\rangle
\biggl(
\sum_{i=1,2}
G(u_i(\tau);1)
+
G(u_3(\tau);c_0)
\biggr)
N_4(u_3(\tau))\nonumber
\end{align}
%%%%%%%%%%%%%%%%%%%%%%%%%%%%%%%%%%%
in the same way as in (\ref{2019july291431}) and 
(\ref{2019july291507}). 
Moreover, as in (\ref{2019july291431}) 
and (\ref{2019july291525}), 
we obtain
%%%%%%%%%%%%%%%%%%%%%%%%%%%%%%%%%%%%
\begin{align}
J_{33}
\leq&
C
\langle\tau\rangle^{-1+\delta}
\langle\!\langle u(\tau)\rangle\!\rangle
\biggl(
\sum_{k=2}^3
L(u_k(\tau))
\biggr)
L(u_3(\tau))
\\
&
+
\langle\tau\rangle^{-1+2\delta}
\langle\!\langle u(\tau)\rangle\!\rangle
\bigl(
\langle\tau\rangle^{-\delta}
N_4(u_2(\tau))
+
\langle\tau\rangle^{-\delta}
N_4(u_3(\tau))
\bigr)
\langle\tau\rangle^{-\delta}
N_4(u_3(\tau)).\nonumber
\end{align}
%%%%%%%%%%%%%%%%%%%%%%%
For $J_{34}$, we easily obtain
\begin{equation}
J_{34}
\leq
C
\langle\tau\rangle^{-2+5\delta}
\langle\!\langle u(\tau)\rangle\!\rangle^2
\bigl(
\langle\tau\rangle^{-\delta}
{\mathcal N}_4(u(\tau))
+
{\mathcal N}_3(u(\tau))
\bigr)
\bigl(
\langle\tau\rangle^{-\delta}
N_4(u_3(\tau))
\bigr).
\end{equation}
%%%%%%%%%%%%%%%%%%%%%%%%%%%
Recall the notation $c_1=c_2=1$, $c_3=c_0$. 
Summing yields for $|a|\leq 3$
%%%%%%%%%%%%%%%%%%%%%%%%%%%%%%
\begin{align}\label{2019aug61029}
\langle&t\rangle^{-2\delta}
E(Z^a u_3(t);c_0)
+
\langle t\rangle^{-2\delta}
\int_0^t
G(u_3(\tau);c_0)^2
d\tau\\
&
\leq
C
E(Z^a u_3(0);c_0)\nonumber\\
&
+
C
\langle\!\langle u\rangle\!\rangle_T
\int_0^t
\langle\tau\rangle^{-1+2\delta}
\biggl(
\sum_{k=1}^3
L(u_k(\tau))
\biggr)
L(u_3(\tau))
d\tau\nonumber\\
&
+
C
\langle\!\langle u\rangle\!\rangle_T
\biggl(
\sup_{0<t<T}
\langle t\rangle^{-\delta}
{\mathcal N}_4(u(t))
\biggr)
\int_0^t
\langle\tau\rangle^{-1+\eta+3\delta}
\biggl(
\sum_{i=1}^3
G(u_i(\tau);c_i)
\biggr)
d\tau\nonumber\\
&
+
C
%\langle t\rangle^{2\delta}
\langle\!\langle u\rangle\!\rangle_T
\biggl(
\sup_{0<t<T}
\langle t\rangle^{-\delta}
{\mathcal N}_4(u(t))
\biggr)^2\nonumber\\
&
+
C
\langle\!\langle u\rangle\!\rangle_T^2
\biggl(
\sup_{0<t<T}
\langle t\rangle^{-\delta}
{\mathcal N}_4(u(t))
+
\sup_{0<t<T}
{\mathcal N}_3(u(t))
\biggr)
\sup_{0<t<T}
\langle t\rangle^{-\delta}
{\mathcal N}_4(u(t)).\nonumber
\end{align}
%%%%%%%%%%%%%%%%%%%%%%%%%
Now we are in a position to complete the proof of 
Proposition \ref{2019Nov17OK}. 
It is obvious that 
the estimate (\ref{2019Nov17LowEnergy}) 
follows from (\ref{2019aug201631}), (\ref{2019aug201638}), 
and (\ref{2019aug201641}). 
The high energy estimate 
(\ref{2019Nov17HighEnergy}) 
is a direct consequence of (\ref{2019aug61027}), 
(\ref{2019aug61028}), and (\ref{2019aug61029}). 
We have finished the proof. $\hfill\Box$
%%%%%%%%%%%%%%%%%%%%%%%%%%
\section{$L^2$ weighted space-time estimates}\label{l2weighted}
The purpose of this section is to prove the following 
a priori estimates:
%%%%%%%%%%%%%%%%%%%%%%%%
\begin{proposition}
The smooth local $($in time$)$ solution $u=(u_1,u_2,u_3)$ to 
$(\ref{eq1})$--$(\ref{data})$ 
defined in $(0,T)\times{\mathbb R}^3$ for some $T>0$ 
satisfies the following a priori estimates for all $t\in (0,T):$
\begin{align}\label{2019aug201643}
\langle&t\rangle^{-(1/2)-4\delta}
\int_0^t
L(u_1(\tau))^2d\tau
\\
&
\leq
C
\sum_{|a|\leq 3}
\|
(\partial Z^a u_1)(0)
\|_{L^2({\mathbb R}^3)}^2\nonumber\\
&
\hspace{0.1cm}
+
C
\langle\!\langle u\rangle\!\rangle_T
\int_0^t
\langle\tau\rangle^{-1+2\delta}
\biggl(
\sum_{k=1}^3
L(u_k(\tau))
\biggr)
L(u_1(\tau))
d\tau\nonumber\\
&
\hspace{0.1cm}
+
C
\langle\!\langle u\rangle\!\rangle_T
\biggl(
\sup_{0<t<T}
\langle t\rangle^{-\delta}
{\mathcal N}_4(u(t))
\biggr)
\int_0^t
\langle\tau\rangle^{-1+\eta+4\delta}
G(u_1(\tau);1)
d\tau\nonumber\\
&
\hspace{0.1cm}
+
C
\langle\!\langle u\rangle\!\rangle_T
\biggl(
\sup_{0<t<T}
\langle t\rangle^{-\delta}
{\mathcal N}_4(u(t))
\biggr)^2\nonumber\\
&
\hspace{0.1cm}
+
C
\langle\!\langle u\rangle\!\rangle_T^2
\biggl(
\sup_{0<t<T}
\langle t\rangle^{-\delta}
{\mathcal N}_4(u(t))
+
\sup_{0<t<T}
{\mathcal N}_3(u(t))
\biggr)
\sup_{0<t<T}
\langle t\rangle^{-\delta}
{\mathcal N}_4(u(t)),\nonumber
\end{align}
%%%%%%%%%%%%%%%%%%%%%%%%%%%%%%%
%%%%%%%%%%%%%%%%%%%%%%%%%%%%%%%%
\begin{align}\label{2019aug201644}
\langle&t\rangle^{-(1/2)-2\delta}
\int_0^t
L(u_2(\tau))^2
d\tau\\
&
\leq
C
\sum_{|a|\leq 3}
\|
(\partial Z^a u_2)(0)
\|_{L^2({\mathbb R}^3)}^2\nonumber\\
&
\hspace{0.1cm}
+
C
\langle\!\langle u\rangle\!\rangle_T
\int_0^t
\biggl(
\sum_{k=1}^3
L(u_k(\tau))
\biggr)
L(u_2(\tau))d\tau\nonumber\\
&
\hspace{0.1cm}
+
C
\langle\!\langle u\rangle\!\rangle_T
\biggl(
\sup_{0<t<T}
\langle t\rangle^{-\delta}
{\mathcal N}_4(u(t))
\biggr)
\int_0^t
\langle\tau\rangle^{-1+\eta+3\delta}
\biggl(
\sum_{i=1,2}
G(u_i(\tau);1)
\biggr)
d\tau\nonumber\\
&
\hspace{0.1cm}
+
C
\langle\!\langle u\rangle\!\rangle_T
\biggl(
\sup_{0<t<T}
\langle t\rangle^{-\delta}
{\mathcal N}_4(u(t))
\biggr)^2\nonumber\\
&
\hspace{0.1cm}
+
C
\langle\!\langle u\rangle\!\rangle_T^2
\biggl(
\sup_{0<t<T}
\langle t\rangle^{-\delta}
{\mathcal N}_4(u(t))
+
\sup_{0<t<T}
{\mathcal N}_3(u(t))
\biggr)
\sup_{0<t<T}
\langle t\rangle^{-\delta}
{\mathcal N}_4(u(t)),\nonumber
\end{align}
%%%%%%%%%%%%%%%%%%%%%%%%%%
\begin{align}\label{2019aug201645}
\langle&t\rangle^{-(1/2)-2\delta}
\int_0^t
L(u_3(\tau))^2d\tau\\
&
\leq
C
\sum_{|a|\leq 3}
\|
(\partial Z^a u_3)(0)
\|_{L^2({\mathbb R}^3)}^2\nonumber\\
&
\hspace{0.1cm}
+
C
\langle\!\langle u\rangle\!\rangle_T
\int_0^t
\biggl(
\sum_{k=1}^3
L(u_k(\tau))
\biggr)
L(u_3(\tau))
d\tau\nonumber\\
&
\hspace{0.1cm}
+
C
\langle\!\langle u\rangle\!\rangle_T
\biggl(
\sup_{0<t<T}
\langle t\rangle^{-\delta}
{\mathcal N}_4(u(t))
\biggr)
\int_0^t
\langle\tau\rangle^{-1+\eta+3\delta}
\biggl(
\sum_{i=1}^3
G(u_i(\tau);c_i)
\biggr)
d\tau\nonumber\\
&
\hspace{0.1cm}
+
C
\langle\!\langle u\rangle\!\rangle_T
\biggl(
\sup_{0<t<T}
\langle t\rangle^{-\delta}
{\mathcal N}_4(u(t))
\biggr)^2\nonumber\\
&
\hspace{0.1cm}
+
C
\langle\!\langle u\rangle\!\rangle_T^2
\biggl(
\sup_{0<t<T}
\langle t\rangle^{-\delta}
{\mathcal N}_4(u(t))
+
\sup_{0<t<T}
{\mathcal N}_3(u(t))
\biggr)
\sup_{0<t<T}
\langle t\rangle^{-\delta}
{\mathcal N}_4(u(t)).\nonumber
\end{align}
\end{proposition}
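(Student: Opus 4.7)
The plan is to apply the Keel--Smith--Sogge type inequality of Lemma \ref{ksstype} with $\mu=1/4$ (so that the weights $r^{-3/2+\mu}=r^{-5/4}$ and $r^{-1/2+\mu}=r^{-1/4}$ match exactly the weights in the definition (\ref{weightlocalenergynorm}) of $L(v)$) and with wave speed $c=c_k$ to $w=Z^a u_k$ for each $|a|\leq 3$ and each $k=1,2,3$, then sum over $|a|\leq 3$. The left-hand side becomes precisely $(1+T)^{-1/2}\int_0^T L(u_k(t))^2\,dt$, which accounts for the factor $\langle T\rangle^{-1/2}$ appearing on the left of (\ref{2019aug201643})--(\ref{2019aug201645}).

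Next I would use the identities (\ref{equality31}), (\ref{equality32}), and (\ref{equality619}), which remain valid for $|a|\leq 3$, to express $\Box_{c_k}Z^a u_k$ as a sum of bilinear null-form and non-null-form contributions plus the cubic term $Z^a C_k$. The dominant term on the right-hand side of Lemma \ref{ksstype}, namely $\int_0^T\!\!\int_{\mathbb{R}^3}|\partial Z^a u_k|\,|\Box_{c_k}Z^a u_k|\,dxd\tau$, has exactly the same bilinear structure as the quantities $J_{k1},J_{k2},\dots$ handled in Section \ref{sectionenergy}, with the ``receiving'' factor $\partial_t Z^a u_k$ replaced by $\partial Z^a u_k$. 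Hence the pointwise inequalities established in (\ref{highenergyj112019july25})--(\ref{2019july231855}) for $u_1$, and the parallel estimates (\ref{2019july291431})--(\ref{2019july291525}) for $u_2$, together with their $u_3$-analogues, transfer almost verbatim: the splitting into $\chi_1$ and $\chi_2$ introduced in Section \ref{sectionmmm} is preserved, on $\chi_2$ the null conditions are invoked through Lemma \ref{estimationlemma} combined with (\ref{interp}) with $\theta=1/2-\eta$ (producing the ghost-weight factor $G$), and the different growth rates of $u_1$ versus $u_2,u_3$ are exploited through the weights encoded in $\langle\!\langle u\rangle\!\rangle$.

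The second term of Lemma \ref{ksstype}, $\int_0^T\!\!\int_{\mathbb{R}^3}|Z^a u_k|\,|\Box_{c_k}Z^a u_k|/(r^{1/2}\langle r\rangle^{1/2})\,dxd\tau$, is reduced to the first type by one preliminary step: since $r^{-1/2}\langle r\rangle^{-1/2}\leq r^{-1}$, Hardy's inequality (used already in (\ref{2019may261})) yields $\|Z^a u_k/(r^{1/2}\langle r\rangle^{1/2})\|_{L^2}\leq C\|\partial Z^a u_k\|_{L^2}$, after which this integral is bounded by the same expressions as above. Collecting all bounds, I would finally multiply the resulting inequality by $\langle T\rangle^{-4\delta}$ for $k=1$ (to absorb the expected $N_4(u_1)^2\sim(1+t)^{4\delta}$ growth) and by $\langle T\rangle^{-2\delta}$ for $k=2,3$; this produces the prefactors $\langle T\rangle^{-1/2-4\delta}$ and $\langle T\rangle^{-1/2-2\delta}$ on the left of (\ref{2019aug201643}) and (\ref{2019aug201644})--(\ref{2019aug201645}) respectively, with exactly the right-hand sides as stated.

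The main obstacle is bookkeeping rather than analysis: after each Cauchy--Schwarz estimate one must verify that the three ``slots'' (the factors $\partial_\alpha Z^{a'}u_j$, $\partial_\beta Z^{a''}u_\ell$, and the receiving $\partial Z^a u_k$ or $Z^a u_k/r$) distribute so that at most one of them absorbs an $L(u_\ell)$ factor and the rest are paid for by $\langle\!\langle u(\tau)\rangle\!\rangle$ or by a supremum of $\langle t\rangle^{-\delta}\mathcal{N}_4$ or $\mathcal{N}_3$, yielding precisely the integrable weights $\langle\tau\rangle^{-1+2\delta}$ and $\langle\tau\rangle^{-1+\eta+4\delta}$ (resp.\ $\langle\tau\rangle^{-1+\eta+3\delta}$) required on the right-hand sides. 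In particular, for the $u_1$ equation one needs the pair $L(u_1)L(u_k)$ with weight $\langle\tau\rangle^{-1+2\delta}$ on $\chi_1$ (via (\ref{highenergyj112019july25})--(\ref{highenergyj122019july25})), while on $\chi_2$ the null structure of $\tilde F_1^{11}$ produces the $G(u_1;1)$ contribution with weight $\langle\tau\rangle^{-1+\eta+4\delta}$; no new analytic difficulty arises beyond what was already resolved in Section \ref{sectionenergy}.
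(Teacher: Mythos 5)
Your proposal follows exactly the route the paper indicates for its (omitted) proof: apply Lemma \ref{ksstype} with $\mu=1/4$ to $w=Z^au_k$ and sum over $|a|\le 3$, reduce the zeroth-order source term via $r^{-1/2}\langle r\rangle^{-1/2}\le r^{-1}$ together with Hardy's inequality (or the norm $L$), and bound $\int_0^t\!\!\int|\partial Z^au_k|\,|\Box_{c_k}Z^au_k|\,dx\,d\tau$ by transferring the trilinear estimates of Section \ref{sectionenergy} with the receiving factor $\partial_tZ^au_k$ replaced by $\partial Z^au_k$. This is correct and essentially identical to the paper's argument.
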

%%%%%%%%%%%%%%%%%%%%%%%%%%
In (\ref{2019aug201645}), we have used the notation 
$c_1=c_2=1$, $c_3=c_0$. 
The proof of this proposition naturally uses Lemma \ref{ksstype} 
with $\mu=1/4$. 
With the simple inequality 
$r^{2\mu}\langle r\rangle^{-2\mu}\leq 1$, 
the contributions from the term
$$
\int_0^T\!\!\!\int_{{\mathbb R}^3}
\frac{|w||\Box_c w|}{r^{1-2\mu}\langle r\rangle^{2\mu}}
dxdt
$$
(see the right-hand side of (\ref{l2spacetime})) 
can be handled with use of the Hardy inequality 
or the norm (\ref{weightlocalenergynorm}), 
and therefore the proof is essentially the same as that of 
(\ref{2019aug61027}), (\ref{2019aug61028}), 
and (\ref{2019aug61029}). 
We may omit the details. $\hfill\Box$
%%%%%%%%%%%%%%%%%%%%%%%%%%%%%
%%%%%%%%%%%%%%%%%%%%%%%%%%%%
%%%%%%%%%%%%%%%%%%%%%%%%%%%
\section{Proof of Theorem \ref{ourmaintheorem}}
Now we are ready to complete the proof of 
Theorem \ref{ourmaintheorem} by using the method of 
continuity. 
By the standard contraction-mapping argument, 
it is easy to show that 
for any smooth, compactly supported data (\ref{data}), 
there exists ${\hat T}>0$ 
depending on $\|(f,g)\|_D$ such that 
the equation (\ref{eq1}) admits 
a unique local (in time) solution 
$u=(u_1,u_2,u_3)$ defined in 
the strip $(0,{\hat T})\times{\mathbb R}^3$ 
satisfying 
$\partial_\alpha Z^a u_i\in
C([0,{\hat T});L^2({\mathbb R}^3))$ 
($\alpha=0,1,2,3$, $|a|\leq 3$, $i=1,2,3$) 
and 
${\rm supp}\,u_i(t,\cdot)
\subset
\{x\in{\mathbb R}^3:|x|<R+c^*t\}$ 
$(i=1,2,3,\,0<t<{\hat T})$. 
Here we have set $c^*:=\max\{1,c_0\}$ 
(see (\ref{eq1}) for $c_0$) 
and chosen $R>0$ so that 
${\rm supp}\,f_i\cup{\rm supp}\,g_i
\subset\{x\in{\mathbb R}^3:|x|<R\}$, 
$i=1,2,3$. Actually, this solution is smooth 
in the strip $(0,{\hat T})\times{\mathbb R}^3$, 
and it has the important properties
\begin{align}
&
N_\mu(u_1(t)),\,
N_\mu(u_2(t)),\,
N_\mu(u_3(t))
\in
C([0,{\hat T})),\,\,\mu=3,4,\label{2019property1}\\
&
N_4(u_1(0))
+
N_4(u_2(0))
+
N_4(u_3(0))
\leq
C_d\|(f,g)\|_D\label{2019property2}
\end{align}
for a suitable constant $C_d>0$. 
%%%%%%%%%%%%%%%%%%%%%%%%%%%%%%%%
We employ the numerical constant $C_{61}$ 
appearing in (\ref{2019aug171715}) and set 
\begin{equation}\label{c*definition}
C^*:=
\max
\biggl\{
2C_d,\,
\frac{2}{3}\sqrt{\frac{4}{3}C_{61}}
\biggr\}
\quad\mbox{so that}\quad
\sqrt{\frac{4}{3}C_{61}}
\leq
\frac{3}{2}C^*.
\end{equation}
%%%%%%%%%%%%%%%%%%%%%%%%%%%%%%%
On the basis of the properties 
(\ref{2019property1})--(\ref{2019property2}), 
for the smooth data (\ref{data}) 
with the support contained in the ball 
$\{x\in{\mathbb R}^3:|x|<R\}$, 
we can define the non-empty set 
of all the numbers $T>0$ such that 
there exists a unique smooth solution $u$ to 
(\ref{eq1})--(\ref{data}) defined in $(0,T)\times{\mathbb R}^3$ 
satisfying 
\begin{align}
&
\langle t\rangle^{-\delta}
{\mathcal N}_4(u(t))
+
{\mathcal N}_3(u(t))
\leq
2C^*\|(f,g)\|_D,\label{n4n3estimate}\\
&
\bigcup_{i=1}^3\,{\rm supp}\,u_i(t,\cdot)
\subset
\{x\in{\mathbb R}^3:|x|<R+c^*t\}\label{finitespeedpropagation}
\end{align}
for all $t\in(0,T)$. 
We define $T^*\in(0,\infty]$ 
as the supremum of this non-empty set. 

To proceed, we assume 
%%%%%%%%%%%%%%%%%%%
\begin{align}\label{fgsizecondition}
\|(f,g)\|_D
<\varepsilon_0:=\min
\biggl\{
1,\,\frac{1}{8C^*C_{33}C_{60}},\,&
\frac{1}{12C^*C_{60}(C_{31}+2C^*C_{32}C_{60})},\\
&
\frac{1}{2C^*C_{60}C_{62}},\,
\frac{1}{C^*C_{60}C_{63}}
\biggr\}.\nonumber
\end{align}
%%%%%%%%%%%%%%%%%%%%%%%%%%%%%%%
For the constants appearing above, 
see (\ref{mninequality2019aug16}), 
(\ref{<<u>>smallerthandata}), and (\ref{2019aug171715}). 
%%%%%%%%%%%%%%%%%%%%%%%%%%%
We prove
%%%%%%%%%%%%%%%%%%%%%%
\begin{proposition}\label{propmninequality2019aug17}
Let $u$ be the smooth solution to 
$(\ref{eq1}){\rm -}(\ref{data})$ 
satisfying 
$(\ref{n4n3estimate})$ and 
$(\ref{finitespeedpropagation})$ for all $t\in (0,T^*)$. 
The estimate
\begin{equation}\label{2019mninequality}
{\mathcal M}_\mu(u(t))
\leq
C
{\mathcal N}_\mu(u(t)),
\quad
0<t<T^*
\end{equation}
holds for $\mu=3,4$, 
provided that 
$\|(f,g)\|_D$ satisfies $(\ref{fgsizecondition})$. 
\end{proposition}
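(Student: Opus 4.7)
The plan is to combine Proposition \ref{2019june25mnineq} with a pointwise control of $\langle\!\langle u(t)\rangle\!\rangle$ by $\mathcal{N}_4$ and $\mathcal{M}_4$. Every summand in the definition (\ref{<<u>>}) involves $Z^a$ with $|a|\leq 2$ applied to $\partial u_i$, packaged in weighted $L^\infty_x$, $L^6_x$, and $L_r^\infty L_\omega^4$ norms. Applying the Klainerman-Sobolev and trace-type inequalities collected in Lemma \ref{someinequalities2019aug17}---in particular (\ref{ellinfty}), (\ref{j2}), (\ref{j3}), and the intermediate weight inequality (\ref{interp})---each summand is bounded by a constant times $N_4$ plus an $M_4$ contribution arising from the $\langle c_j t-r\rangle$-weighted $L^2_x$ norms that make up $M_4$. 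Tracking the $\langle t\rangle^{-\delta}$ and $\langle t\rangle^{-2\delta}$ prefactors built into (\ref{<<u>>}) then gives the inequality (\ref{<<u>>smallerthandata}), namely
\begin{equation*}
\langle\!\langle u(t)\rangle\!\rangle \leq C_{60}\bigl(\langle t\rangle^{-\delta}\mathcal{N}_4(u(t)) + \langle t\rangle^{-\delta}\mathcal{M}_4(u(t))\bigr).
\end{equation*}

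Next, for $\mu=4$, I multiply (\ref{mninequality2019aug16}) by $\langle t\rangle^{-\delta}$, substitute the displayed bound on $\langle\!\langle u\rangle\!\rangle$, and use the standing hypothesis (\ref{n4n3estimate}) to replace $\langle t\rangle^{-\delta}\mathcal{N}_4(u(t))$ and $\mathcal{N}_3(u(t))$ by $2C^*\|(f,g)\|_D$. Writing $X(t) := \langle t\rangle^{-\delta}\mathcal{M}_4(u(t))$ and $\varepsilon := \|(f,g)\|_D$, this yields a pointwise (in $t$) quadratic inequality of the form
\begin{equation*}
X \leq 2C_{KS}C^*\varepsilon + a(\varepsilon)\cdot X + b(\varepsilon)\cdot X^2,
\end{equation*}
where $a(\varepsilon)=O(\varepsilon)$ and $b(\varepsilon) = C_{33}C_{60}+O(\varepsilon)$ are explicit polynomials in $\varepsilon$ assembled from $C_{31}, C_{32}, C_{33}, C_{60}, C^*$.

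The thresholds in (\ref{fgsizecondition})---most notably $\varepsilon < 1/(8C^*C_{33}C_{60})$ and $\varepsilon < 1/(12C^*C_{60}(C_{31}+2C^*C_{32}C_{60}))$---are tailored precisely so that $a(\varepsilon) < 1/2$ and $b(\varepsilon)\cdot X$ stays below $1/4$ as long as $X$ lies in a suitable bootstrap window. Since $X(t)$ is continuous in $t$ and small for $t$ close to $0$, a standard continuity argument along $(0,T^*)$ then yields $X(t) \leq 4C_{KS}C^*\varepsilon$, hence $\mathcal{M}_4(u(t)) \leq C\mathcal{N}_4(u(t))$. The $\mu=3$ estimate follows immediately: with $\langle\!\langle u(t)\rangle\!\rangle$ already shown to be uniformly small, (\ref{mninequality2019aug16}) with $\mu=3$ lets us absorb $C_{33}\langle\!\langle u\rangle\!\rangle\mathcal{M}_3$ into the left-hand side and dominate the remaining terms by $C\mathcal{N}_3$. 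The main obstacle will be the self-referential nature of the bound on $\langle\!\langle u\rangle\!\rangle$---it depends on $\mathcal{M}_4$, the very quantity to be estimated---which is precisely why both the continuity/bootstrap step and the careful tuning of $\varepsilon_0$ to each structural constant appearing in (\ref{fgsizecondition}) are needed.
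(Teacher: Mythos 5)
Your overall strategy---feed a pointwise bound on $\langle\!\langle u(t)\rangle\!\rangle$ into Proposition \ref{2019june25mnineq} and close by continuity---is the paper's strategy, but two steps break as written. First, the displayed bound $\langle\!\langle u(t)\rangle\!\rangle\leq C_{60}\bigl(\langle t\rangle^{-\delta}{\mathcal N}_4+\langle t\rangle^{-\delta}{\mathcal M}_4\bigr)$ is not what (\ref{<<u>>smallerthandata}) says and is not true: the norm (\ref{<<u>>}) contains low-order pieces carrying \emph{no} $\langle t\rangle^{-\delta}$ prefactor (e.g.\ $\|r\langle t-r\rangle^{1/2}\partial u_2\|_{L^\infty}$, $\sum_{|a|\leq 1}\|r^{1/2}\langle t-r\rangle\partial Z^a u_2\|_{L_r^\infty L_\omega^4}$, $\|\langle c_0t-r\rangle\partial u_3\|_{L^\infty}$), and Lemma \ref{someinequalities2019aug17} and (\ref{interp}) control these only by ${\mathcal N}_3+{\mathcal M}_3$; the correct estimate therefore has the extra summand $C({\mathcal N}_3+{\mathcal M}_3)$, and ${\mathcal M}_3\leq{\mathcal M}_4$ gains no factor $\langle t\rangle^{-\delta}$. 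Consequently a bootstrap on the single quantity $X=\langle t\rangle^{-\delta}{\mathcal M}_4$ does not close: uniform smallness of $\langle\!\langle u\rangle\!\rangle$ requires a uniform-in-$t$ bound on ${\mathcal M}_3$ as well, so the $\mu=3$ case cannot be deferred to the end---it must be run inside the bootstrap. This is precisely why the paper bootstraps the ratios ${\mathcal M}_\mu\leq 2C_{KS}{\mathcal N}_\mu$ for $\mu=3$ and $\mu=4$ \emph{simultaneously}, which together with (\ref{n4n3estimate}) gives $\langle\!\langle u\rangle\!\rangle\leq C_{60}(\langle t\rangle^{-\delta}{\mathcal N}_4+{\mathcal N}_3)\leq 2C^*C_{60}\|(f,g)\|_D$ on the bootstrap interval, after which (\ref{mninequality2019aug16}) and (\ref{fgsizecondition}) improve the constant from $2C_{KS}$ to $\frac{3}{2}C_{KS}$.

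Second, the concluding implication ``$X(t)\leq 4C_{KS}C^*\varepsilon$, hence ${\mathcal M}_4\leq C{\mathcal N}_4$'' is a non sequitur: an absolute smallness bound on ${\mathcal M}_4$ gives no control of the ratio ${\mathcal M}_4/{\mathcal N}_4$, since nothing prevents ${\mathcal N}_4(u(t))$ from being far smaller than $\|(f,g)\|_D$. The repair is the absorption you already invoke for $\mu=3$: once $\langle\!\langle u\rangle\!\rangle=O(\varepsilon)$ is established, apply (\ref{mninequality2019aug16}) with $\mu=4$ once more, absorb $C_{33}\langle\!\langle u\rangle\!\rangle{\mathcal M}_4$ into the left-hand side, and use ${\mathcal N}_3\leq{\mathcal N}_4$---which also makes the detour through the quadratic inequality in $X$ unnecessary. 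Two smaller points you gloss over and the paper does not: the continuity of ${\mathcal M}_\mu(u(t))$ on $[0,T^*)$ (and the positivity of ${\mathcal N}_\mu(u(t))$, needed when arguing with the ratio) is justified via the compact support (\ref{finitespeedpropagation}) and a uniqueness theorem; and the base point of the continuity argument is ${\mathcal M}_\mu(u(0))\leq C_{KS}{\mathcal N}_\mu(u(0))$, which holds because the term $t\|\Box_c v(t)\|_{L^2}$ in (\ref{KSineq}) vanishes at $t=0$.
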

%%%%%%%%%%%%%%%%%%%%%%%%
\begin{proof}
We proceed closely following the proof of \cite[Proposition 8.1]{HZ2019}. 
When the initial data is identically zero and hence 
the corresponding solution identically vanishes, 
we obviously have (\ref{2019mninequality}). 
We may therefore suppose 
without loss of generality that 
the smooth initial data is not identically zero. 
We then have 
${\mathcal N}_\mu(u(0))>0$. 
Moreover, 
we see ${\mathcal N}_\mu(u(t))>0$ 
for all $t\in (0,T^*)$ 
by repeating basically the same argument 
as in the proof of 
Proposition 8.1 in \cite{HZ2019}. 
(While the uniqueness theorem of 
$C^2$-solutions of John \cite{John1981}, \cite{John1990} 
was employed in \cite{HZ2019}, 
the uniqueness of $H^3\times H^2$-solutions, 
which can be shown in the standard way for such systems 
of semilinear equations as (\ref{eq1}), 
suffices in the present case.) 
Therefore, we may suppose without loss of generality 
that ${\mathcal N}_\mu(u(t))>0$ 
for all $t\in [0,T^*)$. 

Next, we remark the important fact that 
${\mathcal M}_\mu(u(t))$ is continuous on the interval 
$[0,T^*)$. 
This can be easily verified 
thanks to the fact that 
the smooth solution $u$ satisfies (\ref{finitespeedpropagation}) 
on the interval $[0,T^*)$ and hence 
the uniform continuity of 
$\partial_\alpha\partial_x Z^a u_i$ 
($|a|\leq \mu-2, \alpha=0,\dots,3$) 
in such a bounded and closed set as 
$\{(t,x):t\in[0,T+\delta],\,|x|\leq
R+c^*t\}$ 
($\delta$ is a suitable positive constant) 
can be utilized in order to show 
the continuity of ${\mathcal M}_\mu(u(t))$ 
at $t=T\in[0,T^*)$. 
This is the place where our proof 
of Theorem \ref{ourmaintheorem} relies upon the compactness 
of the support of data. 
Since all the constants appearing in our argument are independent of $R$, 
this condition on the support can be actually removed in the standard way.

Now we are ready to prove (\ref{2019mninequality}). 
We start with the inequality 
$$
{\mathcal M}_\mu(u(t))|_{t=0}\leq
C_{KS}{\mathcal N}_\mu(u(t))|_{t=0}
$$ 
for the constant $C_{KS}$ appearing (\ref{mninequality2019aug16}), 
which is a direct consequence of (\ref{KSineq}). 
(See the second term on the right-hand side of (\ref{KSineq}), 
which vanishes at $t=0$.) 
Since $\bigl({\mathcal M}_\mu(u(t))/{\mathcal N}_\mu(u(t))\bigr)|_{t=0}
\leq C_{KS}$ and 
${\mathcal M}_\mu(u(t))/{\mathcal N}_\mu(u(t))$ 
is continuous on the interval $[0,T^*)$, 
we have ${\mathcal M}_\mu(u(t))/{\mathcal N}_\mu(u(t))
\leq 2C_{KS}$, that is 
\begin{equation}\label{2019m2cksn}
{\mathcal M}_\mu(u(t))\leq 2C_{KS}{\mathcal N}_\mu(u(t))
\end{equation}
at least for a short time interval, say, 
$[0,{\tilde T}]\subset [0,T^*)$. 
It remains to show that (\ref{2019m2cksn}) actually 
holds for {\it all} $t\in[0,T^*)$. 
Let
\begin{align}
{\bar T}:=
\sup
\{\,T\in(0,T^*)\,:
&
\,
{\mathcal M}_\mu(u(t))\leq 2C_{KS}{\mathcal N}_\mu(u(t))\\
&
\qquad\quad
(\mu=3,4)\,\mbox{for all}\,\,t\in[0,T)
\}\nonumber
\end{align}
By definition, we know ${\bar T}\leq T^*$. 
To show ${\bar T}=T^*$, 
we proceed as follows. 
By (\ref{<<u>>}), Lemmas \ref{someinequalities2019aug17}
--\ref{traceinequality2019aug17}, and (\ref{n4n3estimate}), 
we get for $t\in (0,{\bar T})$
\begin{align}\label{<<u>>smallerthandata}
\langle\!\langle u(t)\rangle\!\rangle
&
\leq
C
\langle t\rangle^{-\delta}
\bigl(
{\mathcal N}_4(u(t))
+
{\mathcal M}_4(u(t))
\bigr)
+
C
\bigl(
{\mathcal N}_3(u(t))
+
{\mathcal M}_3(u(t))
\bigr)\\
&
\leq
C_{60}
\bigl(
\langle t\rangle^{-\delta}
{\mathcal N}_4(u(t))
+
{\mathcal N}_3(u(t))
\bigr)
\leq
2C^*C_{60}
\|(f,g)\|_D.\nonumber
\end{align}
Here, $C_{60}$ is a suitable positive constant. 
Owing to the size condition (\ref{fgsizecondition}), 
Proposition \ref{2019june25mnineq} 
combined with the last inequality (\ref{<<u>>smallerthandata}) 
immediately yields for $\mu=3,4$
\begin{equation}\label{mn32inequality}
{\mathcal M}_\mu(u(t))
\leq
\frac32
C_{KS}
{\mathcal N}_\mu(u(t)),
\quad
0<t<{\bar T}.
\end{equation}
Since ${\mathcal M}_\mu(u(t))/{\mathcal N}_\mu(u(t))$ 
is continuous on the interval $[0,T^*)$, 
we have finally arrived at the conclusion 
${\bar T}=T^*$. 
Indeed, if we assume ${\bar T}<T^*$, 
then the estimate (\ref{mn32inequality}) 
contradicts the definition of ${\bar T}$. 
We have finished the proof of 
Proposition \ref{propmninequality2019aug17}. 
\end{proof}
%%%%%%%%%%%%%%%%%%%%%%%%%%%%%5
%%%%%%%%%%%%%%%%%%%%%%%%%%%
Now we are going to prove the crucial a priori estimate
\begin{equation}\label{2019crucial1443}
\langle t\rangle^{-\delta}
{\mathcal N}_4(u(t))
+
{\mathcal N}_3(u(t))
\leq
\frac32
C^*\|(f,g)\|_D,
\quad
0<t<T^*.
\end{equation}
This estimate combined with the standard local existence theorem 
will immediately implie $T^*=\infty$, i.e., global existence. 
Just for simplicity, we use the notation
\begin{align*}
&
{\mathcal G}(t)
:=
\langle t\rangle^{-\delta}
\|
G(u_1(\cdot);1)
\|_{L^2((0,t))}
+
\|
G(u_2(\cdot);1)
\|_{L^2((0,t))}
+
\|
G(u_3(\cdot);c_0)
\|_{L^2((0,t))},\\
%%%%%%%%%%%%%%%%%%%%%%%%%%%%%
&
{\mathcal L}(t)
:=
\langle t\rangle^{-(1/4)}
\bigl(
\langle t\rangle^{-\delta}
\|
L(u_1(\cdot))
\|_{L^2((0,t))}
+
\|
L(u_2(\cdot))
\|_{L^2((0,t))}
+
\|
L(u_3(\cdot))
\|_{L^2((0,t))}
\bigr).
\end{align*}
%%%%%%%%%%%%%%%%%%%%%%%%%%%%
Without loss of generality, we may suppose $T^*>1$ 
because we are considering solutions with small data. 
It then follows from (\ref{2019Nov17LowEnergy}), 
(\ref{2019Nov17HighEnergy}), 
(\ref{2019aug201643}), 
(\ref{2019aug201644}), and 
(\ref{2019aug201645}) 
that for any $T$ with $1<T<T^*$ we have
\begin{align}\label{2019aug171715}
\biggl(&
\sup_{0<t<T}
\langle t\rangle^{-\delta}
{\mathcal N}_4(u(t))
+
\sup_{0<t<T}
{\mathcal N}_3(u(t))
\biggr)^2\\
&
+
\biggl(
\sup_{0<t<T}
\langle t\rangle^{-\delta}
{\mathcal G}(t)
\biggr)^2
+
\biggl(
\sup_{0<t<T}
\langle t\rangle^{-\delta}
{\mathcal L}(t)
\biggr)^2\nonumber\\
&
\leq
C_{61}
\|(f,g)\|_D^2
+
C_{62}
\langle\!\langle u\rangle\!\rangle_T
\biggl(
\sup_{0<t<T}
\langle t\rangle^{-\delta}
{\mathcal L}(t)
\biggr)^2\nonumber\\
&
+
C_{63}
\langle\!\langle u\rangle\!\rangle_T
\biggl(
\sup_{0<t<T}
\langle t\rangle^{-\delta}
{\mathcal N}_4(u(t))
\biggr)
\biggl(
\sup_{0<t<T}
\langle t\rangle^{-\delta}
{\mathcal G}(t)
\biggr)\nonumber\\
&
+
C_{64}
\langle\!\langle u\rangle\!\rangle_T
\biggl(
\sup_{0<t<T}
\langle t\rangle^{-\delta}
{\mathcal N}_4(u(t))
+
\sup_{0<t<T}
{\mathcal N}_3(u(t))
\biggr)^2.\nonumber
\end{align}
Here the positive constants $C_{6i}$ $(i=1,\dots,4)$ are independent 
of $T$. 
We note that $\delta$ and $\eta$ are so small that 
the idea of decomposing the interval $[1,T]$ dyadically 
has played an important role as in such previous papers 
as \cite[p.\,363]{Sogge2003}, \cite[(122)--(125)]{HZ2019}. 
For any $T$ with $T<T^*$, 
we easily see  
$$
\sup_{0<t<T}
\langle t\rangle^{-\delta}
{\mathcal G}(t),
\quad
\sup_{0<t<T}
\langle t\rangle^{-\delta}
{\mathcal L}(t)
<\infty
$$
and it is therefore possible 
to move 
the second and the third terms 
on the right-hand side of (\ref{2019aug171715}) 
to its left-hand side. 
Using the estimate (\ref{<<u>>smallerthandata}), 
which holds for all $t\in(0,T^*)$, 
and (\ref{fgsizecondition}), 
we thereby obtain 
\begin{align}
\biggl(&
\sup_{0<t<T}
\langle t\rangle^{-\delta}
{\mathcal N}_4(u(t))
+
\sup_{0<t<T}
{\mathcal N}_3(u(t))
\biggr)^2\\
&
\leq
C_{61}
\|(f,g)\|_D^2\nonumber\\
&
\hspace{0.1cm}
+
\biggl(
\frac{1}{2}C_{63}+C_{64}
\biggr)
\langle\!\langle u\rangle\!\rangle_T
\biggl(
\sup_{0<t<T}
\langle t\rangle^{-\delta}
{\mathcal N}_4(u(t))
+
\sup_{0<t<T}
{\mathcal N}_3(u(t))
\biggr)^2,\nonumber
\end{align}
which immediately implies
\begin{equation}
\frac34
\biggl(
\sup_{0<t<T}
\langle t\rangle^{-\delta}
{\mathcal N}_4(u(t))
+
\sup_{0<t<T}
{\mathcal N}_3(u(t))
\biggr)^2
\leq
C_{61}
\|(f,g)\|_D^2
\end{equation}
thanks to (\ref{<<u>>smallerthandata}) and (\ref{fgsizecondition}). 
Since $T(<T^*)$ is arbitrary and 
the constant $C_{61}$ is independent of $T$, 
we finally obtain 
\begin{equation}
\sup_{0<t<T^*}
\langle t\rangle^{-\delta}
{\mathcal N}_4(u(t))
+
\sup_{0<t<T^*}
{\mathcal N}_3(u(t))
\leq
\sqrt{\frac{4}{3}C_{61}}\|(f,g)\|_D
\leq
\frac32
C^*
\|(f,g)\|_D.
\end{equation}
See (\ref{c*definition}). 
Now we are in a position to show $T^*=\infty$. 
Assume $T^*<\infty$. 
By solving (\ref{eq1}) with data 
$(u_i(T^*-\delta,x),(\partial_t u_i)(T^*-\delta,x))
\in C_0^\infty({\mathbb R}^3)\times C_0^\infty({\mathbb R}^3)$ 
given at $t=T^*-\delta$ ($\delta$ is a sufficiently small 
positive constant), 
we can extend the local solution under consideration 
smoothly to a larger strip, 
say, 
$\{(t,x):\,0<t<{\tilde T},\,x\in{\mathbb R}^3\}$, 
where $T^*<{\tilde T}$. 
The local solution thereby extended satisfies 
%%%%%%%%%%%%%%%%%%%%%%%%%%%%%%%%%
\begin{align*}
&
N_\mu(u_1(t)),\,
N_\mu(u_2(t)),\,
N_\mu(u_3(t))
\in
C([0,{\tilde T})),\,\,\mu=3,4,\\
&
\bigcup_{i=1}^3\,{\rm supp}\,u_i(t,\cdot)
\subset
\{x\in{\mathbb R}^3:|x|<R+c^*t\},\quad
0<t<{\tilde T}.
\end{align*}
Since 
$\bigl(
\langle t\rangle^{-\delta}
{\mathcal N}_4(u(t))
+
{\mathcal N}_3(u(t))
\bigr)|_{t=T^*}
\leq (3/2)C^*\|(f,g)\|_D$
by (\ref{2019crucial1443}) 
and 
$
\langle t\rangle^{-\delta}
{\mathcal N}_4(u(t))
+
{\mathcal N}_3(u(t))
\in 
C([0,{\tilde T}))
$, 
we see that there exists $T'\in (T^*,{\tilde T}]$ 
such that 
$
\langle t\rangle^{-\delta}
{\mathcal N}_4(u(t))
+
{\mathcal N}_3(u(t))
\bigr)
\leq 2C^*\|(f,g)\|_D$ 
for all 
$t\in (0,T')$, 
which contradicts the definition of $T^*$. 
Hence we have $T^*=\infty$. 
We have finished the proof.$\hfill\Box$
%%%%%%%%%%%%%%%%%%%%%%%%%%%%%%%%%
%%%%%% End %%%%% W %%%%%%%%%%%%%
%%%%%%%%%%%%%%%%%%%%%%%%%%%%%%%
%%%%%%%%%%%%%%%%%%%%%%%%%%%

%%%%%%%%%%%%%%%%%%%%%%%%%%%%%%%%%%%%%%%%%%%%%%%%%%%%
%%%%%%%%%%%%%%%%%%%%%%%%%%%%%%%%%%%%%%%%%%%%%%%%%%%%%%
%%%%%%%%%%%%%
%
\end{document}